\newtheorem{theorem}{Theorem}[section]
\newtheorem{proposition}[theorem]{Proposition}
\newtheorem{lemma}[theorem]{Lemma}
\newtheorem{corollary}[theorem]{Corollary}
\newtheorem{definition}[theorem]{Definition}   
\newtheorem{conjecture}[theorem]{Conjecture}  
\newtheorem{problem}[theorem]{Problem}  
\theoremstyle{remark}
\newcommand\G{\mathcal{GD}}
\newcommand\DP{\mathcal D}
\newcommand\B{\mathcal B}
\renewcommand\P{\mathcal P}
\newcommand\UB{\mathcal U}
\newcommand\M{\mathcal M}
\newcommand\GM{\mathcal{GM}}
\newcommand\WQ{\mathcal{W}}
\newcommand\bij{\phi}
\newcommand\bijw{\omega}
\DeclareMathOperator\ds{ds}
\DeclareMathOperator\dsh{ds\ulcorner}
\renewcommand\sp{\operatorname{sp}}
\DeclareMathOperator\hm{hm}
\newcommand\Cat{C}
\newcommand\sv{\operatorname{sv}}
\newcommand\ret{\operatorname{ret}}
\newcommand\diag{\operatorname{diag}}
\newcommand{\card}[1]{{\lvert #1 \rvert}} 	
\newcommand\pho{\operatorname{ph}_1}
\newcommand\NE{\textnormal{NE}}
\newcommand\NW{\textnormal{NW}}
\newcommand\SE{\textnormal{SE}}
\newcommand\SW{\textnormal{SW}}
\newcommand\vD{\widehat{D}}
\newcommand\vR{\widehat{R}}
\newcommand{\E}{\mathbb{E}}
\newcommand{\V}{\mathbb{V}}
\renewcommand{\Pr}{\mathbb{P}}
\newcommand{\Ral}{\mathcal R}
\newcommand{\Hn}{\mathcal H}
\newcommand\U{-- ++(1,1) circle(1.2pt)}
\newcommand\D{-- ++(1,-1) circle(1.2pt)}
\renewcommand\H{-- ++(1,0) circle(1.2pt)}
\newcommand\drawHtwo[1][black]{\draw[very thick,decorate,decoration={snake,amplitude=.3mm,segment length=1mm},#1] (last)-- ++(1,0) coordinate(last);}
\newcommand\Hone{++(0,.07) -- ++(1,0)++(-1,-.14) -- ++(1,0)++(0,.07) circle(1.2pt)}
\newcommand\N{-- ++(0,1) circle(1.2pt)}
\renewcommand\S{-- ++(0,-1) circle(1.2pt)}
\begin{document}

\title{The degree of symmetry of lattice paths}

\author{Sergi Elizalde\thanks{Department of Mathematics, Dartmouth College, Hanover, NH 03755, USA. {\tt sergi.elizalde@dartmouth.edu}.}}

\date{}

\maketitle

\begin{abstract}
The degree of symmetry of a combinatorial object, such as a lattice path, is a measure of how symmetric the object is. It typically ranges from zero, if the object is completely asymmetric, to its size, if it is completely symmetric.
We study the behavior of this statistic on Dyck paths and grand Dyck paths, with symmetry described by reflection along a vertical line through their midpoint; partitions, with symmetry given by conjugation; and certain compositions interpreted as bargraphs.
We find expressions for the generating functions for these objects with respect to their degree of symmetry, and their semilength or semiperimeter, deducing in most cases that, asymptotically, the degree of symmetry has a Rayleigh or half-normal limiting distribution. The resulting generating functions are often algebraic, with the notable exception of Dyck paths, for which we conjecture that it is D-finite (but not algebraic), based on a functional equation that we obtain using bijections to walks in the plane.
\end{abstract}

\section{Introduction}

Many combinatorial classes are naturally endowed with a reflection operation, namely, an involution.
For those classes, it is natural to study the subset of objects that are {\em symmetric}, that is, invariant under such reflection. Some examples of symmetric combinatorial objects in the literature are symmetric Dyck paths~\cite{DDS}, symmetric grand Dyck paths, self-conjugate partitions~\cite[Prop.\ 1.8.4]{EC1}, symmetric plane partitions~\cite{Mac,Stanley}, symmetric planar maps~\cite{Albenque}, centrally symmetric dissections of a polygon~\cite{Simion}, palindromic compositions~\cite{HB}, and symmetric binary trees, to mention a few.

In this paper we propose a refinement of the notion of symmetric objects, by introducing a type of combinatorial statistic that we call the {\em degree of symmetry}, which measures how close the object is to being symmetric. We study the degree of symmetry of lattice paths, as well as related combinatorial objects such as partitions and bargraphs. In other concurrent work~\cite{DEasym}, a related statistic (specifically, the complementary notion of {\em degree of asymmetry}) is studied as it applies to sequences of integers, matchings, and trees.

Our focus is on the following kind of lattice paths, often called {\em grand Dyck paths}. 
These are paths in the plane with up-steps $U=(1,1)$ and down-steps $D=(1,-1)$ from $(0,0)$ to $(2n,0)$. We call $n$ the {\em semilength} of the path, and we denote by $\G_n$ the set of grand Dyck paths of semilength~$n$. Let $\DP_n$ be the subset of those that do not go below the $x$-axis, which are called {\em Dyck paths}.

Given a path $P\in\G_n$, we view its steps as segments in the plane, which we denote by $\bar{p}_1,\bar{p}_2,\dots,\bar{p}_{2n}$ from left to right. For example, $\bar{p}_1$ has endpoints $(0,0)$ and $(1,\pm 1)$, and $\bar{p}_{2n}$ has endpoints $(2n-1,\pm1)$ and $(2n,0)$. For $i\in[n]=\{1,2,\dots,n\}$, we say that $P$ is symmetric in position $i$ 
(or that $\bar{p}_i$ is a {\em symmetric step}) if $\bar{p}_i$ and $\bar{p}_{2n+1-i}$ are mirror images of each other with respect to the reflection along the vertical line $x=n$. 
We define the {\em degree of symmetry} of $P$, denoted by $\ds(P)$, as the number of $i\in[n]$ such that $P$ is symmetric in position $i$.
See Figure~\ref{fig:symD} for an example.

\begin{figure}[htb]
\centering
   \begin{tikzpicture}[scale=0.5] 
     \draw[dotted](-1,0)--(23,0);
     \draw[dotted](0,-3)--(0,2);
     \draw[dashed,thick](11,-3)--(11,2);
     \draw[thick,blue](0,0) circle(1.2pt) \U\D\D\U\D\U\U\D\U\D\U  \D\D\D\U\U\D\U\U\D\D\U;
     \draw[red,ultra thick](4,0)--(5,-1)--(6,0) (10,0)--(11,1)--(12,0) (16,0)--(17,-1)--(18,0);
    \end{tikzpicture}\medskip
       
    \begin{tikzpicture}[scale=0.5]
     \draw[dotted](-1,0)--(15,0);
     \draw[dotted](0,-1)--(0,4);
     \draw[dashed,thick](7,-1)--(7,4);
     \draw[thick,blue](0,0) \U\U\U\D\D\U\D\D\U\U\D\U\D\D;
     \draw[red,ultra thick](0,0)--(2,2);
     \draw[red,ultra thick](4,2)--(5,1);  
     \draw[red,ultra thick](9,1)--(10,2); 
     \draw[red,ultra thick](12,2)--(14,0);     
    \end{tikzpicture}
   \caption{A grand Dyck path and a Dyck path, both with degree of symmetry 3. The symmetric steps and their mirror images are highlighted in red.}\label{fig:symD}
\end{figure}
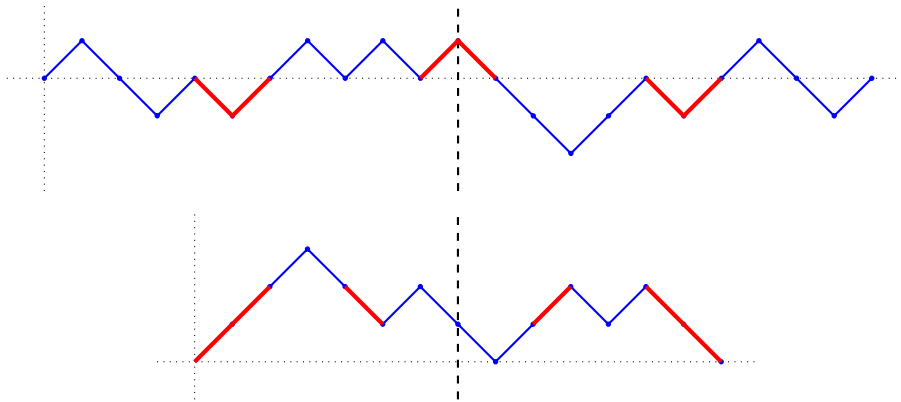

In Section~\ref{sec:preliminaries} we give some background on generalized Motzkin paths, along with some enumerative results on these paths and tools to determine limiting distributions that will be used in later sections. In Section~\ref{sec:GD} we derive an expression for the generating function for grand Dyck paths with respect to semilength and degree of symmetry. 
Section~\ref{sec:partitions} focuses on the degree of symmetry of partitions, giving generating functions with respect to different definitions of size and degree of symmetry. In Section~\ref{sec:unimodal} we consider unimodal compositions whose maximum is in the middle, and we enumerate them with respect to semiperimeter (by viewing compositions as bargraphs) and degree of symmetry.
Section~\ref{sec:Dyck} deals with the enumeration of Dyck paths by the degree of symmetry. Using bijections for walks in the quarter plane, we derive a functional equation for the corresponding generating function.
Surpisingly, the behavior of the degree of symmetry on Dyck paths is significantly more intricate than on grand Dyck paths. In Section~\ref{sec:other}, we try to understand this phenomenon by looking at a related statistic on these paths. 
Our research provides connections to several sequences in the OEIS~\cite{OEIS}, which are summarized in Table~\ref{tab:OEIS}.

An extended abstract of this work, which was presented at the conference FPSAC'20, appears in~\cite{EliFPSAC}.

\begin{table}[htb]
\centering
\resizebox{\columnwidth}{!}{
\begin{tabular}{c|c|l}
OEIS entry~\cite{OEIS} & location in paper & combinatorial intepretation \\ \hline\hline
{\bf A341415} & Thm.~\ref{thm:sym_G} & grand Dyck paths by degree of symmetry\\ \hline
A108747 & Thm.~\ref{thm:sv_G}, Cor.~\ref{cor:sym_P} & grand Dyck paths by number of symmetric vertices;\\ && grand Dyck paths by number of returns\\ \hline
{\bf A341445}, A298645 & Thm.~\ref{thm:symDyck}, $D(s,z)$ & Dyck paths by degree of symmetry\\ \hline
{\bf A339754} & Thm.~\ref{thm:svDyck}, $\vD(v,z)$ & Dyck paths by number of symmetric vertices\\ \hline
A051286 & Eq.~\eqref{eq:UB}, Thm.~\ref{thm:Theta} & unimodal bargraphs with centered maximum;\\ && grand Motzkin paths with no peaks;\\ && uneven bicolored grand Motzkin paths \\ \hline
A004148 & Eq.~\eqref{eq:peaklessMotzkin}, Prop.~\ref{prop:bijections} & Motzkin paths with no peaks;\\ && Motzkin paths with no valleys; \\ && uneven bicolored Motzkin paths \\ \hline
A005817 & Sec.~\ref{sec:Dyck} & walks in first quadrant with steps 
\begin{tikzpicture}[scale=.2]
\draw[->] (0,0)--(1,0);
\draw[->] (0,0)--(1,-1);
\draw[->] (0,0)--(-1,1);
\draw[->] (0,0)--(-1,0);
\end{tikzpicture}
ending on $x$-axis \\ \hline
A005558 & Sec.~\ref{sec:Dyck} & walks in first quadrant with steps \begin{tikzpicture}[scale=.2]
\draw[->] (0,0)--(1,0);
\draw[->] (0,0)--(1,-1);
\draw[->] (0,0)--(-1,1);
\draw[->] (0,0)--(-1,0);
\end{tikzpicture} and arbitrary endpoint \\ \hline
A001246 & Thm.~\ref{thm:symDyck}, $R(0,0,1,z)$ & Dyck paths with midpoint at height $0$ \\ \hline
A213600 & Thm.~\ref{thm:symDyck}, $R(x,0,1,z)$ & Dyck paths by height of midpoint \\ \hline
A018224 & Thm.~\ref{thm:symDyck}, $R(1,1,1,z)$ & Dyck paths allowed to be discontinuous (i.e., can jump to a \\
&& different height) at midpoint
\\ \hline
A000891 & Thm.~\ref{thm:symDyck}, $R(0,1,1,z)$ & Dyck paths allowed to be discontinuous at midpoint,\\
&& with one half being a Dyck path
\end{tabular}
}
\caption{A summary of the OEIS sequences the appear in the paper. The entries in boldface indicate new additions to the database~\cite{OEIS}.}
\label{tab:OEIS}
\end{table}

\section{Preliminaries}\label{sec:preliminaries}

In Section~\ref{sec:bicolored} we present some facts about bicolored Motzkin paths that will be used in the proofs of our results in Sections~\ref{sec:GD}, \ref{sec:partitions}, and~\ref{sec:unimodal}.
In Section~\ref{sec:limiting} we introduce some tools that will be needed to describe the limiting distribution of the degree of symmetry and related statistics on various objects, once we find the corresponding bivariate generating functions. 

\subsection{Bicolored grand Motzkin paths}\label{sec:bicolored}

Like a grand Dyck path, a {\em bicolored grand Motzkin path} starts at the origin and ends on the $x$-axis, but, in addition to steps $U=(1,1)$ and $D=(1,-1)$, it may contain horizontal steps $(1,0)$ of two kinds (or colors), denoted by $H_1$ and $H_2$. 
If such a path does not go below the $x$-axis, it is called a {\em bicolored Motzkin path}.
Denote by $\GM^2$ the set of bicolored grand Motzkin paths, and by $\M^2$ the set of bicolored Motzkin paths. We often identify a path with its sequence of steps.

For a path $M\in\GM^2$, let $u(M)$ denote its number of $U$ steps (which also equals its number $d(M)$ of $D$ steps), and define $h_1(M)$ and $h_2(M)$ analogously.
Additionally, let $h_1^0(M)$ denote the number of $H_1$ steps of $M$ on the $x$-axis, and define $h_2^0(M)$ similarly.
Define the {\em length} of $M$ to be its total number of steps, which we denote by $|M|$. Note that $|M|=u(M)+d(M)+h_1(M)+h_2(M)$. Let $\M^2_n\subset\M^2$ and $\GM^2_n\subset\GM^2$ denote the subsets consisting of paths of length $n$ in each case.

\begin{lemma}\label{lem:2Motzkin}
Let $F(x,y)=\sum_{M\in\M^2} x^{d(M)+h_1(M)} y^{u(M)+h_2(M)}$. Then
$$F(x,y)=\frac{1-x-y-\sqrt{(1-x-y)^2-4xy}}{2xy}.$$
\end{lemma}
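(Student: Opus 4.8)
The plan is to set up a functional equation for $F(x,y)$ by decomposing a bicolored Motzkin path according to its first step, and then solve the resulting quadratic. First I would observe that a path $M\in\M^2$ is either empty, or it begins with an $H_1$ step, an $H_2$ step, or a $U$ step. In the first three cases, removing the initial step leaves an arbitrary path in $\M^2$; in the last case, since $M$ stays weakly above the $x$-axis, it must eventually return to the axis, so $M=U\,M_1\,D\,M_2$ where $M_1,M_2\in\M^2$ are the portions strictly inside the first descent and after it (this is the standard first-return decomposition for Motzkin-type paths). Tracking the contribution of each step to the exponents of $x$ and $y$ is the key bookkeeping: an $H_1$ contributes $x$ (it counts toward $d+h_1$), an $H_2$ contributes $y$, and the matched pair $U\cdots D$ contributes $x$ for the $D$ and $y$ for the $U$, i.e.\ a factor $xy$. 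This yields the functional equation
\[
F(x,y) = 1 + xF(x,y) + yF(x,y) + xy\,F(x,y)^2.
\]

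Next I would solve this as a quadratic in $F = F(x,y)$. Rewriting it as $xy\,F^2 - (1-x-y)F + 1 = 0$ and applying the quadratic formula gives
\[
F = \frac{(1-x-y) \pm \sqrt{(1-x-y)^2 - 4xy}}{2xy}.
\]
To pin down the sign, I would note that $F(x,y)$ must be a power series in $x,y$ that is analytic (in fact equals $1$) at $(x,y)=(0,0)$; the branch with the $+$ sign blows up like $1/(xy)$ near the origin, whereas the branch with the $-$ sign has the correct limit $1$ (expanding the square root to first order, $(1-x-y) - \sqrt{(1-x-y)^2-4xy} \approx 2xy$ for small $x,y$). Hence the minus sign is the correct choice, giving exactly the claimed formula.

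The only genuinely delicate point is justifying that every nonempty path beginning with $U$ has a well-defined first return to the $x$-axis and that the decomposition $M = UM_1DM_2$ is a bijection onto $\{U\}\times\M^2\times\{D\}\times\M^2$ — this is routine for lattice paths constrained to the upper half-plane, since the height is $+1$ after the first step and ends at $0$, so by discreteness of the steps there is a first time the height returns to $0$, and that step must be a $D$. One should also check that the colors of horizontal steps play no role in this argument (they do not affect height), so the decomposition goes through verbatim with the colored alphabet. I expect the sign determination in the quadratic formula to be the part most worth spelling out carefully, since it is where the combinatorial hypothesis (paths start and end on the axis, so the series is well-defined at the origin) enters; everything else is a direct transcription of the first-return decomposition.
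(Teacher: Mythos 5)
Your proposal is correct and follows essentially the same route as the paper: the same decomposition of a nonempty path as $H_1M'$, $H_2M'$, or $UM'DM''$, the same functional equation $F=1+(x+y)F+xyF^2$, and the same choice of the root that yields a formal power series. The extra care you take with the weight bookkeeping and the sign determination is fine but not a departure from the paper's argument.
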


\begin{proof}
A non-empty path $M\in\M^2$ can be decomposed uniquely as $H_1M'$, $H_2M'$ or $UM'DM''$, where $M',M''\in\M^2$ (translated appropriately). This decomposition yields the following equation for the generating function:
$$F(x,y)=1+(x+y)F(x,y)+xyF(x,y)^2.$$
Solving for $F(x,y)$ and taking the sign of the square root that results in a formal power series, we get the stated expression.
\end{proof}

\begin{lemma}\label{lem:grand2Motzkin}
Let $G(x,y,s_1,s_2)=\sum_{M\in\GM^2} x^{d(M)+h_1(M)} y^{u(M)+h_2(M)} s_1^{h_1^0(M)} s_2^{h_2^0(M)}$. Then
$$G(x,y,s_1,s_2)=\frac{1}{(1-s_1)x+(1-s_2)y+\sqrt{(1-x-y)^2-4xy}}.$$
\end{lemma}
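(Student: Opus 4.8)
The plan is to mimic the first-return decomposition used in the proof of Lemma~\ref{lem:2Motzkin}, but now allowing the path to dip below the $x$-axis, and to keep track of which horizontal steps lie on the axis. A nonempty path $M\in\GM^2$ decomposes uniquely according to its first return to the $x$-axis into one of four shapes: $H_1M'$, $H_2M'$, $UNDM'$, or $DN'UM'$, where in each case $M'\in\GM^2$ is the remainder of the path, $N\in\M^2$ is a bicolored Motzkin path (translated down by one unit), and $N'$ is a bicolored Motzkin path reflected across the $x$-axis (equivalently, a path from the origin ending on the $x$-axis with steps $U,D,H_1,H_2$ that stays weakly below the axis, translated up by one unit). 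In the first two cases the leading horizontal step sits on the axis, so it contributes to $h_1^0$ or $h_2^0$; the horizontal steps hidden inside $N$, inside $N'$, and inside the recursive pieces do not.

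The key sub-claim is that the generating function $\Phi=\sum_{N'} x^{d(N')+h_1(N')}y^{u(N')+h_2(N')}$ over reflected bicolored Motzkin paths $N'$ equals $F(x,y)$. This follows because such paths admit the mirror-image first-return decomposition $\epsilon$, $H_1N''$, $H_2N''$, or $DN'''UN''$, which yields exactly the same quadratic $\Phi=1+(x+y)\Phi+xy\Phi^2$ that characterizes $F(x,y)$ in Lemma~\ref{lem:2Motzkin}; since both are the power-series root, $\Phi=F(x,y)$. (Alternatively, reflecting across the axis and swapping the two colors of horizontal steps is a weight-preserving bijection with $\M^2$, once one notes that the formula for $F$ is symmetric in $x$ and $y$.) With this in hand, translating the four-way decomposition into generating functions gives
$$G(x,y,s_1,s_2)=1+xs_1\,G(x,y,s_1,s_2)+ys_2\,G(x,y,s_1,s_2)+2xy\,F(x,y)\,G(x,y,s_1,s_2),$$
where the factor $2$ accounts for the two mirror-symmetric excursion types $UND$ and $DN'U$, each contributing $xy\,F(x,y)$ (the initial $U$ or $D$ of weight $y$ or $x$, the enclosed (reflected) Motzkin path of weight $F(x,y)$, and the matching closing step).

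Solving for $G$ yields $G=\bigl(1-xs_1-ys_2-2xy\,F(x,y)\bigr)^{-1}$. I would then substitute $2xy\,F(x,y)=1-x-y-\sqrt{(1-x-y)^2-4xy}$ from Lemma~\ref{lem:2Motzkin}: the two $1$'s cancel and the denominator collapses to $(1-s_1)x+(1-s_2)y+\sqrt{(1-x-y)^2-4xy}$, which is the claimed expression. I expect the only real care to be needed in the bookkeeping of the decomposition — verifying that the below-axis excursions carry the same weight as the above-axis ones, and that on-axis versus interior horizontal steps are never conflated — since after that the algebraic simplification is immediate.
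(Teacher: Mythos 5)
Your proof is correct and follows essentially the same route as the paper: the paper decomposes $M\in\GM^2$ directly as a sequence of on-axis $H_1$ steps, on-axis $H_2$ steps, and excursions $UM'D$ or $D\overline{M'}U$ with $M'\in\M^2$, giving $G=(1-s_1x-s_2y-2xyF(x,y))^{-1}$, which is exactly the equation your first-return recursion produces. Your additional verification that below-axis excursions also carry weight $xyF(x,y)$ (via the mirrored quadratic, or reflection together with the $x\leftrightarrow y$ symmetry of $F$) is the point the paper leaves implicit, and you handle it correctly.
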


\begin{proof}
For $M'\in\M^2$, denote by $\overline{M'}$ the path in $\GM^2$ obtained by reflecting $M'$ along the $x$-axis; equivalently, by changing the $U$ steps into $D$ steps and vice versa. 

A path $M\in\GM^2$ can be written uniquely as a sequence of steps $H_1$ on the $x$-axis, steps $H_2$ on the $x$-axis,  paths of the form $UM'D$, and paths of the form $D\overline{M'}U$, where $M'\in\M^2$. It follows that
$$G(x,y,s_1,s_2)=\frac{1}{1-s_1x-s_2y-2xyF(x,y)}.$$
Using the expression for $F(x,y)$ given by Lemma~\ref{lem:2Motzkin}, we obtain the desired formula.
\end{proof}

\subsection{Limiting distributions}\label{sec:limiting}

Let $A(u,z)=\sum_{k,n}a_{k,n} u^kz^n$ be a bivariate generating function enumerating a class of objects with respect to their size and some statistic, so that $a_{k,n}$ is the number of objects of size $n$ where the statistic equals $k$. Let $a_n=\sum_k a_{k,n}$ be the number of objects of size $n$. To study the distribution of the statistic on objects of a given size, we define a sequence of random variables $X_n$ by
\begin{equation}\label{eq:X_n} \Pr(X_n=k)=\frac{a_{k,n}}{a_n}.
\end{equation}
 
Denote by $A_u$ and $A_{uu}$ the first and second partial derivatives of $A$ with respect to $u$, respectively. It is well known that the expected value and the variance of $X_n$ are given by
\begin{equation}\label{eq:EV}
\E X_n=\frac{[z^n]A_u(1,z)}{a_n} \quad\text{and}\quad
\V X_n=\frac{[z^n]A_{uu}(1,z)+[z^n]A_u(1,z)}{a_n}-\left(\frac{[z^n]A_u(1,z)}{a_n}\right)^2,
\end{equation}
which can be easily computed if we know $A(u,z)$. The following two theorems describe the limiting distribution of $X_n$ when $A(u,z)$ has a certain form. They are due to Drmota and Soria~\cite[Thm.\ 1]{DrmSor} and Wallner~\cite[Thm.\ 2.1]{Wallner}, respectively.

\begin{theorem}[\cite{DrmSor}]\label{thm:Rayleigh}
Suppose that the following hold: 
\begin{enumerate}[(i)]
\item $A(u,z)$ is a bivariate power series with nonnegative coefficients;
\item $A(1,z)$ has radius of convergence $\rho>0$;
\item there exist analytic functions $g(u,z)$ and $h(u,z)$ such that  
$A(u,z)^{-1}=g(u,z)+h(u,z)\sqrt{1-z/\rho}$
for $|u-1|<\epsilon$ and $|z-\rho|<\epsilon$, $\arg(z-\rho)\neq0$, for some $\epsilon>0$;
\item $g(1,\rho)=0$;
\item $z=\rho$ is the only singularity on the circle $|z|=\rho$;
\item $A(u,z)$ can be analytically continued to a region $|z|<\rho+\delta$, $|u|<1+\delta$, $|u-1|>\epsilon/2$, and $\arg(z-\rho)\neq0$ for some $\delta>0$;
\item $h(1,\rho)>0$ and $g_u(1,\rho)<0$.
\end{enumerate}
Then the sequence of random variables~\eqref{eq:X_n} has a Rayleigh limiting distribution, that is,
$$\frac{X_n}{\sqrt{n}}\stackrel{d}{\to}\Ral(\sigma), \qquad \text{where}\quad \sigma=\frac{\sqrt{2}\,|g_u(1,\rho)|}{h(1,\rho)},$$ and $\Ral(\sigma)$ has density $\frac{x}{\sigma^2}e^{-x^2/2\sigma^2}$ for $x\ge0$. Expected value and variance are given by
$$\E X_n=\sigma\sqrt{\frac{\pi n}{2}}+O(1)\quad\text{and}\quad \V X_n=\sigma^2\left(2-\frac{\pi}{2}\right)n+O(\sqrt{n}),$$
and we have the local law
$$\Pr(X_n=k)=\frac{k}{\sigma^2 n}e^{-k^2/2\sigma^2n}+O((k+1)n^{-3/2})+O(n^{-1})$$
uniformly for all $k\ge0$.
\end{theorem}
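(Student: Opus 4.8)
The plan is to run singularity analysis in the sense of Flajolet and Odlyzko, carried out uniformly in the auxiliary variable $u$ near $1$. I would first pin down $a_n$: setting $u=1$ in hypothesis~(iii) and using $g(1,\rho)=0$ gives $A(1,z)^{-1}=g(1,z)+h(1,z)\sqrt{1-z/\rho}$, and since $g$ is analytic with $g(1,\rho)=0$ we have $g(1,z)=O(z-\rho)$, which is negligible next to the $\sqrt{1-z/\rho}$ term. Hence $A(1,z)\sim\bigl(h(1,\rho)\sqrt{1-z/\rho}\bigr)^{-1}$ as $z\to\rho$; hypotheses~(ii), (v), (vi) provide a $\Delta$-domain on which this expansion is valid and on which $z=\rho$ is the only dominant singularity, so the transfer theorem yields $a_n=[z^n]A(1,z)\sim \rho^{-n}/(h(1,\rho)\sqrt{\pi n})$.

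Next I would identify the limiting law through the rescaled moment generating function $M_n(t):=\E e^{tX_n/\sqrt n}=[z^n]A(e^{t/\sqrt n},z)/a_n$. Writing $u=e^{t/\sqrt n}$, so $u-1=t/\sqrt n+O(1/n)$, and Taylor-expanding $g$ and $h$ about $(1,\rho)$ (again using $g(1,\rho)=0$), one finds that near the singularity $A(u,z)^{-1}=g_u(1,\rho)\,t\,n^{-1/2}+h(1,\rho)\sqrt{1-z/\rho}+(\text{lower order})$; this is a branch point at $z=\rho$ which for $t\neq 0$ carries a nearby simple pole. Extracting $[z^n]$ along a Hankel contour $z=\rho(1+w/n)$ replaces the coefficient by $\dfrac{\rho^{-n}}{2\pi i\sqrt n}\displaystyle\int_{\mathcal H}\dfrac{e^{-w}\,dw}{g_u(1,\rho)\,t+h(1,\rho)\sqrt{-w}}\,(1+o(1))$, uniformly for $t$ in a compact set; dividing by $a_n$ and comparing with the $t=0$ instance gives a limit which, after rationalizing the integrand and evaluating the resulting classical Hankel integral, is a closed form in the complementary error function that one recognizes as the moment generating function of $\Ral(\sigma)$ with $\sigma=\sqrt 2\,\lvert g_u(1,\rho)\rvert/h(1,\rho)$. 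Since $\Ral(\sigma)$ has a finite moment generating function near $0$, hence is determined by its moments, convergence of $M_n$ on a neighborhood of $0$ forces $X_n/\sqrt n\stackrel{d}{\to}\Ral(\sigma)$.

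The moment asymptotics I would read off directly from~\eqref{eq:EV}. Differentiating $A^{-1}=g+h\sqrt{1-z/\rho}$ in $u$ gives $A_u=-A^2\bigl(g_u+h_u\sqrt{1-z/\rho}\bigr)$, so at $u=1$ one gets $A_u(1,z)\sim -g_u(1,\rho)\,h(1,\rho)^{-2}(1-z/\rho)^{-1}$, a simple pole; thus $[z^n]A_u(1,z)\sim \lvert g_u(1,\rho)\rvert h(1,\rho)^{-2}\rho^{-n}$ and $\E X_n\sim(\lvert g_u(1,\rho)\rvert/h(1,\rho))\sqrt{\pi n}=\sigma\sqrt{\pi n/2}$. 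A further differentiation produces a $(1-z/\rho)^{-3/2}$ singularity, whence $[z^n]A_{uu}(1,z)\asymp\rho^{-n}\sqrt n$, and substituting into~\eqref{eq:EV} gives $\V X_n\sim\sigma^2(2-\pi/2)n$. For the local law I would run a version of the same Hankel-contour estimate that is uniform in $k$, extracting $[u^k]$ of the relevant quantities by a saddle-point argument.

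The main obstacle, I expect, is making the coefficient extraction genuinely uniform as $u\to 1$: one must control the confluent singularity in which the simple pole present for $u\neq 1$ merges into the square-root branch point at $z=\rho$, and show that the part of the inversion contour away from $z=\rho$ contributes negligibly uniformly over the relevant range of $u$---this is precisely what hypotheses~(v)--(vii) are engineered to guarantee---followed by the non-routine identification of the limiting Hankel integral with the Rayleigh moment generating function. The local limit law is the most delicate point, since it further requires all of these estimates to be uniform in $k$.
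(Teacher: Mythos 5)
This theorem is not proved in the paper at all: it is quoted verbatim from Drmota and Soria \cite[Thm.~1]{DrmSor}, so there is no internal argument to compare yours against. Judged on its own, your outline follows essentially the route of the original Drmota--Soria proof: uniform singularity analysis in $u$ near $1$, with the dominant singularity being a square-root branch point at $z=\rho$ into which a simple pole (present for $u\neq 1$) coalesces, a Hankel-contour extraction of $[z^n]$, identification of the limiting integral with the Rayleigh transform, and direct singular expansions of $A_u(1,z)$ and $A_{uu}(1,z)$ for the moments. The pieces you actually carry out are correct: the transfer giving $a_n\sim\rho^{-n}/(h(1,\rho)\sqrt{\pi n})$, the simple-pole and $(1-z/\rho)^{-3/2}$ singularities of $A_u(1,z)$ and $A_{uu}(1,z)$, and the resulting constants do reproduce $\E X_n=\sigma\sqrt{\pi n/2}+O(1)$ and $\V X_n=\sigma^2(2-\pi/2)n+O(\sqrt n)$.

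However, as written this is a proof plan rather than a proof, and you say so yourself: the heart of the theorem is exactly the part you defer. Specifically, (a) the uniform-in-$u$ coefficient extraction through the confluent pole/branch-point regime --- note that for real $u>1$ the pole of $A(u,\cdot)$ sits on the positive real axis strictly inside $|z|=\rho$ (equivalently, on the branch-cut side of your Hankel contour), so the contour deformation and the error bounds away from $z=\rho$ need genuine care and are where hypotheses (v)--(vi) enter; (b) the evaluation of the limiting Hankel integral and its identification with the Rayleigh moment generating function (or characteristic function, as in the original proof, which sidesteps the real-$u$ pole issue); and (c) the local limit law, which requires a second, $k$-uniform coefficient extraction in $u$ and is substantially more delicate than the weak convergence statement. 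None of these steps is carried out, so the proposal should be regarded as a correct roadmap to the known proof rather than a complete independent argument; for the paper's purposes the appropriate course is what the paper does, namely to cite \cite{DrmSor}.
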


\begin{theorem}[\cite{Wallner}]\label{thm:half-normal}
Suppose that conditions (i)--(vi) from Theorem~\ref{thm:Rayleigh} hold, and also the following:
\begin{enumerate}[(vii')]
\item $h(1,\rho)=g_u(1,\rho)=g_{uu}(1,\rho)=0$, \ $g_z(1,\rho)\neq0$ and $h_u(1,\rho)\neq0$.
\end{enumerate}
Then the sequence of random variables~\eqref{eq:X_n} has a half-normal limiting distribution, that is,
$$\frac{X_n}{\sqrt{n}}\stackrel{d}{\to}\Hn(\sigma), \qquad \text{where}\quad \sigma=\frac{\sqrt{2}\,|h_u(1,\rho)|}{\rho\,|g_z(1,\rho)|},$$ and $\Hn(\sigma)$ has density $\frac{1}{\sigma}\sqrt{\frac{2}{\pi}}\,e^{-x^2/2\sigma^2}$  for $x\ge0$. Expected value and variance are given by
$$\E X_n=\sigma\sqrt{\frac{2}{\pi n}}+O(1)\quad\text{and}\quad \V X_n=\sigma^2\left(1-\frac{2}{\pi}\right)n+O(\sqrt{n}),$$
and we have the local law
$$\Pr(X_n=k)=\frac{1}{\sigma}\sqrt\frac{2}{\pi n}e^{-k^2/2\sigma^2n}+O(kn^{-3/2})+O(n^{-1})$$
uniformly for all $0\le k\le k_0\sqrt{n\log n}$ with $k_0<\sigma$.
\end{theorem}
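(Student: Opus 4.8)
The statement is, up to notation, Theorem~2.1 of Wallner~\cite{Wallner}, and the plan is to reproduce that argument: a singularity analysis of $A(u,z)$ in the variable $z$, carried out uniformly as $u$ ranges over a small real neighbourhood of~$1$. The base case is $u=1$: by~(iv) and the first part of~(vii') both $g(1,\rho)$ and $h(1,\rho)$ vanish, so $A(1,z)^{-1}=g_z(1,\rho)(z-\rho)+O\big((z-\rho)^{3/2}\big)$ near $z=\rho$ with $g_z(1,\rho)\neq0$; hence $A(1,z)$ has a simple pole at $z=\rho$ and, by~(v)--(vi), no other singularity of modulus at most $\rho$, so $a_n=c\,\rho^{-n}\big(1+O(n^{-1/2})\big)$ with $c=-1/\big(\rho\,g_z(1,\rho)\big)$. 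This ``pure power'' leading growth, as opposed to the $\rho^{-n}n^{-1/2}$ growth that underlies the Rayleigh scheme of Theorem~\ref{thm:Rayleigh}, is what ultimately forces a half-normal limit.

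I would then locate the dominant singularity of $A(u,\cdot)$ for real $u$ near~$1$. Writing $w=\sqrt{1-z/\rho}$ and expanding, the equation $A(u,z)^{-1}=0$ reads $\rho\,g_z(u,\rho)\,w^2-h(u,\rho)\,w-g(u,\rho)+O(w^3)=0$. Since $h(1,\rho)=g_u(1,\rho)=g_{uu}(1,\rho)=0$ while $g_z(1,\rho)\neq0$ and $h_u(1,\rho)\neq0$, we have $h(u,\rho)\sim h_u(1,\rho)(u-1)$ and $g(u,\rho)=O\big((u-1)^3\big)$, so this quadratic in $w$ has roots $w_1(u)=O\big((u-1)^2\big)$ and $w_2(u)=\dfrac{h_u(1,\rho)}{\rho\,g_z(1,\rho)}\,(u-1)\big(1+O(u-1)\big)$. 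The root that is an admissible value of $\sqrt{1-z/\rho}$ corresponds to a simple pole of $A(u,z)$ at $z_2(u)=\rho\big(1-w_2(u)^2\big)$, lying just inside $|z|=\rho$ and merging into the branch point $z=\rho$ as $u\to1$.

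The technical core is the coefficient extraction, performed uniformly as this confluence takes place. After a partial-fraction decomposition of the singular part of $A(u,z)$ in the variable $w$, i.e.\ into terms $\big(\sqrt{1-z/\rho}-w_i(u)\big)^{-1}$, I would use a refined transfer estimate of the shape
$$[z^n]\,\frac{1}{\sqrt{1-z/\rho}-w}=\rho^{-n}\Big(\tfrac{1}{\sqrt{\pi n}}+w\,e^{w^2 n}\operatorname{erfc}\!\big(-w\sqrt{n}\big)+\cdots\Big),$$
valid uniformly as $w\to0$ with $w\sqrt{n}$ bounded. Setting $u=e^{s/\sqrt{n}}$, so that $w_2(u)\sqrt{n}\to\frac{h_u(1,\rho)}{\rho\,g_z(1,\rho)}\,s$, and dividing by $a_n$, one finds that $\E\big[e^{sX_n/\sqrt n}\big]=[z^n]A\big(e^{s/\sqrt n},z\big)/a_n$ converges to the moment generating function of $\Hn(\sigma)$ with $\sigma=\sqrt2\,|h_u(1,\rho)|/\big(\rho\,|g_z(1,\rho)|\big)$; since the half-normal law is determined by its moments, this gives $X_n/\sqrt n\stackrel{d}{\to}\Hn(\sigma)$. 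The asymptotics for $\E X_n$ and $\V X_n$ then come from feeding the singular expansions of $A_u(1,z)$ and $A_{uu}(1,z)$ into~\eqref{eq:EV}, and the local law from a Cauchy-coefficient estimate on the same uniform expansion.

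The main obstacle is exactly this uniformity. Ordinary singularity analysis handles an isolated pole, or an isolated algebraic singularity, but not the regime in which the pole $z_2(u)$ collides with the branch point at $\rho$; one must therefore control the coefficients of $\big(\sqrt{1-z/\rho}-w_i(u)\big)^{-1}$ with error terms uniform in $w_i(u)\to0$, and it is the $\operatorname{erfc}$ factor surviving that limit---rather than collapsing to a pure Gaussian---that yields a half-normal rather than a normal law. Lesser but still necessary checks are that $w_2(u)$ is the dominant admissible root on both sides of $u=1$, and that hypotheses~(v)--(vi) genuinely rule out competing singularities throughout a fixed neighbourhood of $u=1$.
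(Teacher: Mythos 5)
This theorem is not proved in the paper at all: it is stated with the attribution \cite{Wallner} and used as a black box (Theorem~2.1 of Wallner's half-normal scheme), so there is no internal proof to compare your argument against. Your outline is a faithful reconstruction of the strategy of the cited proof: the conditions $g(1,\rho)=h(1,\rho)=0$, $g_z(1,\rho)\neq0$ give pure $\rho^{-n}$ growth of $a_n$; for $u$ near $1$ the equation $g+h\sqrt{1-z/\rho}=0$ produces a root $w_2(u)\approx \frac{h_u(1,\rho)}{\rho\,g_z(1,\rho)}(u-1)$, i.e.\ a polar singularity coalescing with the branch point as $u\to1$; and the half-normal law (rather than a Gaussian one) comes precisely from the surviving $\operatorname{erfc}$ factor in a coefficient expansion that is uniform through this confluence, evaluated at $u=e^{s/\sqrt n}$. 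The one caveat is that your "technical core" --- the uniform transfer estimate for $\bigl(\sqrt{1-z/\rho}-w\bigr)^{-1}$ with $w\sqrt n$ bounded, including the verification that conditions (v)--(vi) keep competing singularities away uniformly in $u$ --- is exactly the content of Wallner's paper and is asserted rather than proved in your sketch; as a blind proof it is therefore an accurate and well-aimed outline, but not a self-contained argument, which is consistent with the paper's own decision to cite the result rather than prove it.
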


The half-normal distribution is obtained when taking the absolute variable of a normally distributed random variable with zero mean.

\section{Symmetry of grand Dyck paths}\label{sec:GD}

The main result of this section is the following remarkably simple formula. 

\begin{theorem} \label{thm:sym_G} The generating function for grand Dyck paths with respect to their degree of symmetry is
$$\sum_{n\ge0} \sum_{P\in\G_n} s^{\ds(P)} z^n = \frac{1}{2(1-s)z + \sqrt{1-4z}}.$$
\end{theorem}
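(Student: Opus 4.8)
The plan is to decompose a grand Dyck path $P \in \G_n$ according to which of its first $n$ positions are symmetric, and to match each "symmetric part" with a bicolored grand Motzkin structure so that Lemma~\ref{lem:grand2Motzkin} can be applied. The key observation is this: position $i \in [n]$ is symmetric precisely when the step $\bar p_i$ and the step $\bar p_{2n+1-i}$ are reflections of each other across $x=n$; reading the first half of $P$ from left to right and the second half from right to left, and superimposing them, a symmetric position contributes a matched pair (an $U$ over a reflected $U$, i.e.\ a $D$ read backwards, or vice versa), while an asymmetric position contributes a mismatched pair. I would encode the pair of half-paths as a single path of length $n$ over an alphabet recording, at each position, the joint behavior of the two halves. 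Matched positions become, say, steps that keep the two halves "in sync", and I expect the running height to be governed by the difference of the heights of the two halves, so that the whole object becomes a bicolored grand Motzkin path: $U$ and $D$ steps come from mismatched positions (where exactly one half goes up), and the two colors $H_1,H_2$ of horizontal steps come from the matched positions (both halves up, or both halves down), with the variable $s$ marking precisely these horizontal steps.

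Concretely, I would set up the bijection so that $\ds(P)$ equals $h_1(M)+h_2(M)$ — or perhaps $h_1^0(M)+h_2^0(M)$ together with a further substitution — for the associated $M \in \GM^2_n$, and track $n$ as the length $|M|$. One has to be careful about the middle: the midpoint of $P$ at $x=n$ is a single vertex of height, say, $j$, and the reflection fixes it, so there is a boundary condition linking the two halves at their common end; in the superimposed picture this says the combined path returns to the $x$-axis, which is exactly the "grand Motzkin" (as opposed to "grand Dyck") condition — the combined path may dip below. Once the dictionary is fixed, the generating function $\sum_{P} s^{\ds(P)} z^n$ becomes a specialization of $G(x,y,s_1,s_2)$ from Lemma~\ref{lem:grand2Motzkin}: roughly $x = y = z$ (each position of $P$ contributes one unit of semilength), $s_1 = s_2 = s$ (both colors mark symmetric positions), giving
\[
\frac{1}{2(1-s)z + \sqrt{(1-2z)^2 - 4z^2}} = \frac{1}{2(1-s)z + \sqrt{1-4z}},
\]
since $(1-2z)^2 - 4z^2 = 1 - 4z$. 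The final algebraic simplification is immediate; the content is entirely in the bijection and the bookkeeping of where $z$, $s_1$, $s_2$ get substituted.

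The main obstacle, I expect, is making the superposition bijection precise and verifying it is a genuine bijection onto $\GM^2_n$ — in particular handling the parity/midpoint condition correctly and checking that mismatched positions really do contribute $U$ and $D$ steps in a way that keeps the combined walk a valid (unrestricted) lattice path returning to $0$, with no spurious constraints. A secondary subtlety is confirming that it is $h_1+h_2$ (total horizontal steps) rather than the on-axis count $h_1^0+h_2^0$ that corresponds to $\ds(P)$; if it is the former, then one uses Lemma~\ref{lem:grand2Motzkin} with $s_1,s_2$ replaced appropriately inside the path (equivalently, one may redo the Motzkin decomposition with all horizontal steps weighted by $s$, or simply note that weighting every $H_1,H_2$ by $s$ in Lemma~\ref{lem:2Motzkin} and Lemma~\ref{lem:grand2Motzkin} changes $x+y$ to $x+y$ only at the marked spots) — I would sort this out by checking the claimed formula against small cases $n=1,2$ before committing to the exact form of the substitution.
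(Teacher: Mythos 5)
Your overall route is exactly the paper's: superimpose $P_L$ and the reflected right half $P'_R$, encode the pair as a bicolored grand Motzkin path $M\in\GM^2_n$ (mismatched positions give $U,D$; matched directions give $H_1,H_2$), and then specialize Lemma~\ref{lem:grand2Motzkin} as $G(z,z,s,s)$. The final substitution you write down is the correct one. However, there is a genuine gap precisely at the point you flag and leave unresolved: whether $\ds(P)$ corresponds to $h_1(M)+h_2(M)$ or to $h_1^0(M)+h_2^0(M)$. Your narrative actually asserts the wrong dictionary --- you say a symmetric position is simply a position where the two halves move in the same direction (``both halves up, or both halves down''), and that $s$ marks ``precisely these horizontal steps,'' i.e.\ all of them. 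That is false: position $i$ is symmetric only if $\bar{\ell}_i$ and $\bar{r}_i$ coincide \emph{as segments}, which requires equal direction \emph{and} equal height. For example, in $P=UUDDDU\in\G_3$ one has $\ell_2=r_2=U$ but the two steps sit at different heights, so position $2$ is not symmetric; counting all horizontal steps of $M$ would overcount and give a different generating function, so the ambiguity you defer to ``checking small cases'' is load-bearing, not cosmetic.

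The missing ingredient that resolves it (and is the heart of the paper's Lemma~\ref{lem:bij_properties}) is the observation that, under the encoding, the height of each vertex of $M$ equals half the difference of the heights of the corresponding vertices of $P'_R$ and $P_L$. Hence the two halves occupy the same height at position $i$ exactly when $M$ is at height $0$ there, and a common step of $P_L$ and $P'_R$ is exactly a horizontal step of $M$ \emph{on the $x$-axis}. This gives $\ds(P)=h_1^0(M)+h_2^0(M)$, which is what the specialization $s_1=s_2=s$ in $G(x,y,s_1,s_2)$ marks, and then your computation $G(z,z,s,s)=1/\bigl(2(1-s)z+\sqrt{1-4z}\bigr)$ completes the proof. (Your treatment of the midpoint is fine: both halves end at the same point, so the difference of heights is $0$ at the end, which is the ``ends on the $x$-axis'' condition for $\GM^2$, and bijectivity is immediate since the pair $(\ell_i,r_i)$ is recoverable from each step of $M$.) With the height-difference lemma supplied, your argument coincides with the paper's proof.
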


Before we prove this theorem, let us introduce some notation.
Given $P\in\G_n$, one can construct two paths as follows. Let $P_L$ and $P_R$ denote the left half and the right half of $P$, respectively. Let $P'_R$ be the path obtained by reflecting $P_R$ along the vertical line $x=n$. Note that $P_L$ and $P'_R$ are paths with steps $U$ and $D$ from $(0,0)$ to some common endpoint on the line $x=n$. Denote the $i$th step of $P_L$ by $\bar{\ell}_i$ when viewed as a segment in the plane, and let $\ell_i\in\{U,D\}$ be the direction of this step. Define $\bar{r}_i$ and $r_i$ similarly for the path $P'_R$. Next we describe a bijection $\bij$ from $\G_n$ to $\GM^2_n$; see
Figure~\ref{fig:pairs2Motzkin} for an example. 

\begin{definition}\label{def:bij}
For $P\in\G_n$ with the above notation, let $\bij(P)\in\GM^2_n$ be the path whose $i$th step is equal to 
$$\begin{cases} U & \text{ if } \ell_i=U \text{ and }r_i=D,\\
D & \text{ if } \ell_i=D \text{ and }r_i=U,\\
H_1 & \text{ if } \ell_i=r_i=D,\\
H_2 & \text{ if } \ell_i=r_i=U.
\end{cases}$$
\end{definition}

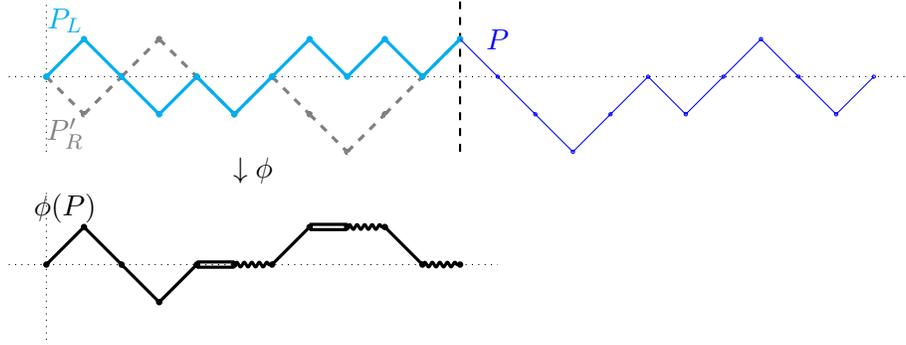
\begin{figure}[htb]
\centering
    \begin{tikzpicture}[scale=0.5]
     \draw[dotted](-1,0)--(23,0);
     \draw[dotted](0,-2)--(0,2);
     \draw[dashed,thick](11,-2)--(11,2);
    \draw[gray,very thick,dashed](0,0) circle (1.2pt)  \D\U\U\D\D\U\D\D\U\U\U;
    \draw[cyan,very thick](0,0) circle (1.2pt) \U\D\D\U\D\U\U\D\U\D\U coordinate(last);
    \draw[blue] (last) \D\D\D\U\U\D\U\U\D\D\U;
    \draw[gray] (.5,-1.5) node {$P'_R$};
    \draw[cyan] (.5,1.5) node {$P_L$};
    \draw[blue] (12,1) node {$P$};
    \draw (5.5,-2.5) node  {$\downarrow\bij$};
    \begin{scope}[shift={(0,-5)}]
      \draw[dotted](-1,0)--(12,0);
     \draw[dotted](0,-2)--(0,2);
    \draw[very thick](0,0) circle (1.2pt)  \U\D\D\U\Hone coordinate(last);
     \drawHtwo
    \draw[very thick](last) circle (1.2pt) \U\Hone coordinate(last);
    \drawHtwo 
    \draw[very thick](last) circle (1.2pt) \D coordinate(last);
    \drawHtwo
    \draw[very thick](last) circle (1.2pt);
    \draw (.5,1.5) node {$\bij(P)$};
    \end{scope}
    \end{tikzpicture}
   \caption{An example of the bijection $\bij:\G_n\to\GM^2_n$ from Definition~\ref{def:bij}. Above, the path $P\in\G_n$ from the top of Figure~\ref{fig:symD}
   is drawn in blue, with its left half $P_L$ drawn in light blue and its reflected right half $P'_R$  drawn in gray with dashes. Below is the path $\bij(P)\in\GM^2$, whose $H_1$ and $H_2$ steps are pictured with double lines and wavy lines, respectively.}\label{fig:pairs2Motzkin}
\end{figure}

\begin{lemma}\label{lem:bij_properties}
The map $\bij:\G_n\to\GM^2_n$ is a bijection with the property that, if $M=\bij(P)$, then $\ds(P)=h_1^0(M)+h_2^0(M)$.
\end{lemma}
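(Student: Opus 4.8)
My plan is to establish the three parts of the claim --- that $\bij$ lands in $\GM^2_n$, that it is a bijection, and that $\ds(P)=h_1^0(M)+h_2^0(M)$ --- all through a single height computation. First I would set $a_i=1$ if $\ell_i=U$ and $a_i=-1$ if $\ell_i=D$, define $b_i$ from $r_i$ in the same way, and write $h^L_j:=a_1+\dots+a_j$ and $h^R_j:=b_1+\dots+b_j$ for the heights of $P_L$ and $P'_R$ after $j$ steps. Running through the four cases of Definition~\ref{def:bij}, the $j$-th step of $M=\bij(P)$ changes the height by exactly $(a_j-b_j)/2$ (for instance, $(\ell_j,r_j)=(U,D)$ gives a $U$ step and $(a_j-b_j)/2=1$, whereas $\ell_j=r_j$ gives a horizontal step and $(a_j-b_j)/2=0$); hence the height $h^M_j$ of $M$ after $j$ steps equals $(h^L_j-h^R_j)/2$. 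Since, as already noted before the definition, $P_L$ and $P'_R$ end at the same point, $h^L_n=h^R_n$, so $h^M_n=0$ and $M\in\GM^2_n$.

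Next, for bijectivity, I would note that Definition~\ref{def:bij} amounts to a bijection of alphabets $\{U,D\}^2\to\{U,D,H_1,H_2\}$ applied at each index $i$. Inverting it index by index, a path $M\in\GM^2_n$ determines sequences $(\ell_i)$ and $(r_i)$, hence a pair of length-$n$ paths $P_L,P'_R$ with steps in $\{U,D\}$; the identity $h^M_n=(h^L_n-h^R_n)/2$ together with $h^M_n=0$ forces $h^L_n=h^R_n$, so $P_L$ and $P'_R$ share their endpoint, and reflecting $P'_R$ across $x=n$ and appending it to $P_L$ yields a path in $\G_n$. These two maps are mutually inverse, so $\bij$ is a bijection.

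For the main identity, I would fix $i\in[n]$ and compare the segments $\bar\ell_i$ and $\bar r_i$, which have endpoints $(i-1,h^L_{i-1}),(i,h^L_i)$ and $(i-1,h^R_{i-1}),(i,h^R_i)$ respectively. As the two endpoints of each segment have distinct $x$-coordinates, $\bar\ell_i=\bar r_i$ if and only if $h^L_{i-1}=h^R_{i-1}$ and $\ell_i=r_i$. Since $P'_R$ is the reflection of $P_R$ across $x=n$, the step $\bar r_i$ is precisely the mirror image of $\bar p_{2n+1-i}$, and $\bar p_i=\bar\ell_i$, so $P$ is symmetric in position $i$ exactly when $\bar\ell_i=\bar r_i$, i.e.\ when $h^L_{i-1}=h^R_{i-1}$ and $\ell_i=r_i$. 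On the other hand, the $i$-th step of $M$ is horizontal exactly when $\ell_i=r_i$, and such a step lies on the $x$-axis exactly when $h^M_{i-1}=0$, i.e.\ when $h^L_{i-1}=h^R_{i-1}$. Hence position $i$ is symmetric for $P$ if and only if the $i$-th step of $M$ is a horizontal step on the $x$-axis, and summing over $i\in[n]$ gives $\ds(P)=h_1^0(M)+h_2^0(M)$.

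The argument is entirely mechanical once the identity $h^M_j=(h^L_j-h^R_j)/2$ is in place; the one spot I would be careful about is the reflection bookkeeping, i.e.\ checking against the paper's conventions that the $i$-th step of $P'_R$ really is the mirror image of $\bar p_{2n+1-i}$ across $x=n$. That is what lets ``$P$ is symmetric in position $i$'' be rewritten as the segment equality $\bar\ell_i=\bar r_i$, after which the case analysis of Definition~\ref{def:bij} finishes the proof.
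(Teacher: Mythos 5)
Your proof is correct and follows essentially the same route as the paper's: identify symmetric positions of $P$ with coinciding steps of $P_L$ and $P'_R$, and observe that the height of $\bij(P)$ at each vertex is half the difference of the heights of $P_L$ and $P'_R$, so coinciding steps become horizontal steps on the $x$-axis. You simply spell out the height bookkeeping and the (index-by-index) inverse map in more detail than the paper does.
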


\begin{proof}
A path $P\in\G_n$ is symmetric in position $i$ if and only if the steps $\bar{\ell}_i$ and $\bar{r}_i$ coincide as segments. Thus, the degree of symmetry of $P$ equals the number of common (i.e.\ overlapping) steps of $P_L$ and $P'_R$.

By construction of $\bij$, the height (i.e., $y$-coordinate) of each vertex of $M$ is half of the difference of heights of the corresponding vertex of $P'_R$ and the corresponding vertex of $P_L$. Thus, steps where $P_L$ and $P'_R$ coincide become horizontal steps of $M$ at height~$0$. 
\end{proof}

\begin{proof}[Proof of Theorem~\ref{thm:sym_G}]
Combining Lemmas~\ref{lem:bij_properties} and~\ref{lem:grand2Motzkin},
\[
\sum_{n\ge0} \sum_{P\in\G_n} s^{\ds(P)} z^n  = \sum_{M\in\GM^2} s^{h_1^0(M)+h_2^0(M)} z^{|M|} = G(z,z,s,s)=\frac{1}{2(1-s)z+\sqrt{1-4z}}.\qedhere
\]
\end{proof}

When $P_L$ lies strictly above $P'_R$ (except at their common endpoints), the pair $(P_L,P'_R)$ is called a {\em parallelogram polyomino} \cite{Pol,Sha,DV}, and its semiperimeter is defined to be the length of either of the two paths. The bijection $\bij$ in Definition~\ref{def:bij}, after removing the first and the last step of the image path, restricts to a bijection between parallelogram polyominos of semiperimeter $n$ and bicolored Motzkin paths of length $n-2$, which are known to be enumerated by the Catalan number $C_{n-1}$. Indeed, using this bijection and Lemma~\ref{lem:2Motzkin}, the generating function for parallelogram polyominoes where $z$ marks the semiperimeter is $$z^2F(z,z)=\frac{1-2z-\sqrt{1-4z}}{2}=z(\Cat(z)-1),$$
where 
\begin{equation}\label{eq:Cat} \Cat(z)=\sum_{n\ge0} C_n z^n = \frac{1-\sqrt{1-4z}}{2z} 
\end{equation} 
is the generating function for the Catalan numbers. 
More generally, $xyF(x,y)$ is the generating function for parallelogram polyominoes where $x$ and $y$ mark the number of $D$ and $U$ steps, respectively, of the upper (equivalently, the lower) path.

When $P_L$ lies weakly above $P'_R$, the pair $(P_L,P'_R)$ is sometimes called a {\em $2$-watermelon} \cite{Fisher,GOV,Roitner}. Such objects are again enumerated by the Catalan numbers, since they are obtained from parallelogram polyominos by removing the first and the last steps of both paths. The distribution of the number of common vertices and common steps in $2$-watermelons has independently been studied by Roitner~\cite{Roitner}, using a restriction of the bijection $\bij$ to this special case.

Applying the tools from Section~\ref{sec:limiting} to Theorem~\ref{thm:sym_G}, we can describe the limiting distribution of the degree of symmetry of grand Dyck paths.

\begin{proposition}\label{prop:limit-ds}
The sequence of random variables $X_n$ giving the degree of symmetry of a uniformly random grand Dyck path in $\G_n$ has a Rayleigh limiting distribution. Specifically,
$$\frac{X_n}{\sqrt{n}}\stackrel{d}{\to}\Ral\left(\frac{1}{\sqrt{2}}\right),$$ 
which has density function $2xe^{-x^2}$ for $x\ge0$, and
$$\E X_n=\frac{\sqrt{\pi n}}{2}+O(1),\qquad \V X_n=\left(1-\frac{\pi}{4}\right)n+O(\sqrt{n}),$$
$$\Pr(X_n=k)=\frac{2k}{n}e^{-k^2/n}+O((k+1)n^{-3/2})+O(n^{-1})$$
uniformly for all $k\ge0$.
\end{proposition}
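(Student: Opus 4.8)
The plan is to apply Theorem~\ref{thm:Rayleigh} to the bivariate generating function $A(s,z)$ of Theorem~\ref{thm:sym_G}, with the variable $s$ (marking $\ds$) playing the role of $u$ and $z$ marking the semilength, so that $X_n$ is exactly the random variable in~\eqref{eq:X_n}. Reading off the closed form, we have
\[
A(s,z)^{-1} = 2(1-s)z + \sqrt{1-4z} = g(s,z) + h(s,z)\sqrt{1-z/\rho}
\]
with $\rho = \tfrac14$, $g(s,z) = 2(1-s)z$, and $h(s,z) = 1$. The task then reduces to checking hypotheses (i)--(vii) of that theorem and substituting into its formulas.

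Conditions (i)--(iv) and (vii) are essentially free. Nonnegativity (i) is combinatorial; $A(1,z) = (1-4z)^{-1/2} = \sum_n\binom{2n}{n}z^n$ has radius of convergence $\rho = \tfrac14 > 0$, giving (ii) (and $a_n = \card{\G_n} = \binom{2n}{n}$); $g$ and $h$ are polynomials, hence entire, giving (iii); $g(1,\rho) = 0$ gives (iv); and $h(1,\rho) = 1 > 0$ together with $g_s(s,z) = -2z$, so $g_s(1,\rho) = -\tfrac12 < 0$, gives (vii). The one place needing genuine thought is the singularity structure, i.e.\ (v)--(vi): since $\sqrt{1-4z}$ is analytic off $[\tfrac14,\infty)$, every singularity of $A(s,z)$ is either the branch point $z=\tfrac14$ or a zero of $2(1-s)z+\sqrt{1-4z}$; writing $w = \sqrt{1-4z}$ (so $z=(1-w^2)/4$), such a zero solves $(1-s)w^2 - 2w - (1-s) = 0$, and for $s$ near $1$ the only relevant root is $w = \tfrac{s-1}{2} + O((s-1)^2)$, which the principal branch attains only when $\operatorname{Re} s > 1$, producing a simple pole at $z = \tfrac14 - \tfrac{(s-1)^2}{16} + \cdots$. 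This pole, when present, lies strictly \emph{inside} $\lvert z\rvert = \tfrac14$, so $z=\rho$ is the unique singularity on that circle (condition (v)), and the explicit description of the singular set furnishes the analytic continuation in (vi). I expect this bookkeeping around (v)--(vi) to be the only real obstacle; everything else is a substitution.

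With the hypotheses in hand, Theorem~\ref{thm:Rayleigh} gives a Rayleigh limit with
\[
\sigma = \frac{\sqrt 2\,\lvert g_s(1,\rho)\rvert}{h(1,\rho)} = \frac{\sqrt 2\cdot\tfrac12}{1} = \frac{1}{\sqrt 2}.
\]
Substituting $\sigma^2 = \tfrac12$ into the density $\tfrac{x}{\sigma^2}e^{-x^2/2\sigma^2}$, into the expansions for $\E X_n$ and $\V X_n$, and into the local law of Theorem~\ref{thm:Rayleigh} yields $2xe^{-x^2}$, then $\E X_n = \tfrac{\sqrt{\pi n}}{2}+O(1)$, $\V X_n = (1-\tfrac\pi4)n+O(\sqrt n)$, and $\Pr(X_n=k) = \tfrac{2k}{n}e^{-k^2/n}+O((k+1)n^{-3/2})+O(n^{-1})$, which are exactly the claimed statements.
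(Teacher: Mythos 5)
Your proposal is correct and follows essentially the same route as the paper: both apply Theorem~\ref{thm:Rayleigh} to the generating function of Theorem~\ref{thm:sym_G} with $\rho=\tfrac14$, $g(s,z)=2(1-s)z$, $h(s,z)=1$, obtaining $\sigma=\tfrac{1}{\sqrt2}$ and then reading off the density, moments, and local law. Your explicit discussion of the singularity structure for conditions (v)--(vi) is more detailed than the paper, which simply invokes the theorem (and additionally records the exact values of $\E X_n$ and $\V X_n$ from Equation~\eqref{eq:EV} as a side remark), but the argument is the same.
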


\begin{proof}
We apply Theorem~\ref{thm:Rayleigh} to the bivariate generating function from Theorem~\ref{thm:sym_G} (using the variable $s$ instead of $u$), with $\rho=1/4$, $g(s,z)=2(1-s)z$, and $h(s,z)=1$. The resulting Rayleigh distribution has parameter
$$\sigma=\frac{\sqrt{2}\,|g_s(1,1/4)|}{h(1,1/4)}=\frac{1}{\sqrt{2}},$$
from where the rest of the statements follow.

Note that the exact expected value and the variance can also be computed directly from Equation~\eqref{eq:EV}:
$$\E X_n=\frac{2^{2n-1}}{\binom{2n}{n}},\qquad
\V X_n
=\frac{2n(n-1)}{2n-1}+\frac{2^{2n-1}}{\binom{2n}{n}}-\frac{4^{2n-1}}{\binom{2n}{n}^2}.
$$
\end{proof}

\medskip

An alternative measure of the symmetry of a grand Dyck path is its {\em number of symmetric vertices}, that is, vertices in the first half of the path that are mirror images of vertices in the second half, again with respect to reflection along the vertical line that passes through the midpoint of the path. We do not consider the midpoint of the path as a symmetric vertex. For example, the grand Dyck path in Figure~\ref{fig:sv} has 6 symmetric vertices. Denote by $\sv(P)$ the number of symmetric vertices of the path $P\in\G_n$. The following is the analogue of Theorem~\ref{thm:sym_G} when considering symmetric vertices instead of symmetric steps.

\begin{figure}[htb]
\centering
   \begin{tikzpicture}[scale=0.5] 
     \draw[dotted](-1,0)--(23,0);
     \draw[dotted](0,-3)--(0,2);
     \draw[dashed,thick](11,-3)--(11,2);
     \draw[thick,blue](0,0) circle(1.2pt) \U\D\D\U\D\U\U\D\U\D\U  \D\D\D\U\U\D\U\U\D\D\U;
     \foreach \x in {(0,0),(2,0),(4,0),(5,-1),(6,0),(10,0)}{ \draw[red,thick] \x circle(.15);}  
     \foreach \x in {(22,0),(20,0),(18,0),(17,-1),(16,0),(12,0)}{ \draw[red,thick] \x circle(.15);}  
    \end{tikzpicture}
   \caption{A grand Dyck path with 6 symmetric vertices, highlighted in red along with their mirror images.}\label{fig:sv}
\end{figure}
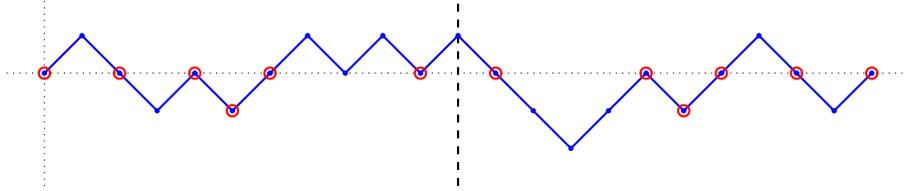

\begin{theorem} \label{thm:sv_G} The generating function for grand Dyck paths with respect to their number of symmetric vertices is
$$\sum_{n\ge0} \sum_{P\in\G_n} v^{\sv(P)} z^n = \frac{1}{1-2vzC(z)}=\frac{1}{1-v + v\sqrt{1-4z}}.$$
\end{theorem}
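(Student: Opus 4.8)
The plan is to mimic the proof of Theorem~\ref{thm:sym_G}, but track symmetric \emph{vertices} rather than symmetric steps. Recall the bijection $\bij:\G_n\to\GM^2_n$ from Definition~\ref{def:bij}, built from the left half $P_L$ and the reflected right half $P'_R$. Just as $P$ is symmetric in position $i$ exactly when $\bar\ell_i$ and $\bar r_i$ coincide as segments, a vertex of $P$ in the first half is symmetric exactly when the corresponding vertices of $P_L$ and $P'_R$ coincide, i.e. have the same height. Under $\bij$, the height of the $i$th vertex of $M=\bij(P)$ is half the difference of the heights of the $i$th vertices of $P'_R$ and $P_L$ (this was already observed in the proof of Lemma~\ref{lem:bij_properties}). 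Hence a vertex of $P$ (other than the midpoint, which is never counted) is symmetric if and only if the corresponding vertex of $M$ lies on the $x$-axis. So the first step is to record: $\sv(P)$ equals the number of vertices of $M=\bij(P)$ strictly between the initial and final vertices that touch the $x$-axis. Equivalently, writing $M$ as a concatenation of ``arches'' — maximal factors that begin and end on the $x$-axis and have no interior vertex on the axis — together with single steps $H_1$ or $H_2$ lying on the axis, the quantity $\sv(P)$ counts the number of return points to the $x$-axis among the first $|M|$ vertices, i.e. one less than the number of maximal pieces in this factorization (or, more cleanly, the number of internal lattice points on the $x$-axis).

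The second step is to set up the generating-function decomposition directly, rather than invoking Lemma~\ref{lem:grand2Motzkin}, since now we want to mark on-axis \emph{vertices} rather than on-axis $H$-steps. A path $M\in\GM^2$ is a sequence of pieces, each being either a single on-axis step ($H_1$ or $H_2$, contributing $z$) or a ``bump'' of the form $UM'D$ or $D\overline{M'}U$ with $M'\in\M^2$ (each contributing $z^2F(z,z)$, by Lemma~\ref{lem:2Motzkin}, since each of the four non-axis step types is weighted by $z$). Between consecutive pieces there is an on-axis vertex; counting internal on-axis vertices is the same as counting the gaps between consecutive pieces, so each piece \emph{after the first} contributes an extra factor of $v$. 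This gives
$$\sum_{n\ge0}\sum_{P\in\G_n}v^{\sv(P)}z^n=\sum_{M\in\GM^2}v^{\sv(\bij^{-1}(M))}z^{|M|}=\frac{1}{1-v\bigl(2z+z^2F(z,z)\bigr)}\cdot\frac{1}{1}$$
— more precisely, if $B=2z+z^2F(z,z)$ is the ``one piece'' series then the sum over nonempty sequences of pieces with $v$ on every gap is $\dfrac{1}{1-vB}$... one must be slightly careful: the correct expression is $1+\dfrac{B}{1-vB}=\dfrac{1-vB+B}{1-vB}$. The third step is the routine simplification: from Lemma~\ref{lem:2Motzkin}, $z^2F(z,z)=\tfrac12\bigl(1-2z-\sqrt{1-4z}\bigr)$, so $B=2z+z^2F(z,z)=\tfrac12\bigl(1-\sqrt{1-4z}\bigr)=zC(z)$ by \eqref{eq:Cat}. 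Substituting, $1+\dfrac{B}{1-vB}=\dfrac{1}{1-vB}\cdot(1-vB+B)$; plugging $B=zC(z)$ and using $2zC(z)=1-\sqrt{1-4z}$ collapses this to $\dfrac{1}{1-2vzC(z)}=\dfrac{1}{1-v+v\sqrt{1-4z}}$, matching the claimed formula. (I would double-check the constant term: at $z=0$ we get $1$, corresponding to the empty path with $\sv=0$, as it should.)

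The main obstacle is the bookkeeping of the ``first piece gets no $v$'' subtlety and making sure the empty path and single-piece paths are counted with the right power of $v$ — it is easy to write $\tfrac{1}{1-vB}$ and be off by exactly the kind of factor that changes $\tfrac{1}{1-2vzC(z)}$ into something wrong. The cleanest way to avoid this is the reformulation ``$\sv(P)=$ number of \emph{internal} on-axis vertices of $M$'', and then the standard quasi-power decomposition of a grand-Motzkin-type path as (first return block)$\cdot$(rest), yielding a functional equation $Q=1+B\,(1+vQ')$... but in fact the flat linear structure here (all pieces are ``atoms'' in sequence) makes the geometric-series form $1+\tfrac{B}{1-vB}$ exactly right, and the rest is algebra already appearing in the paragraph after Theorem~\ref{thm:sym_G}. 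A secondary, purely cosmetic obstacle is reconciling the two displayed forms of the answer, which is just the identity $2zC(z)=1-\sqrt{1-4z}$.
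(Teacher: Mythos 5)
Your overall plan is the paper's plan (apply $\bij$, decompose the bicolored grand Motzkin path into on-axis steps and bumps, weight by $v$), but the execution has an off-by-one error in the key translation step, plus an algebra slip, and the two do not cancel. First, the translation: the origin of $P$ is always a symmetric vertex (it mirrors to the endpoint; see Figure~\ref{fig:sv} and the right half of Table~\ref{tab1}, where $\sv\ge 1$ for all $n\ge1$), and it corresponds to the \emph{initial} vertex of $M=\bij(P)$. Only the midpoint of $P$ is excluded, and it corresponds to the \emph{final} vertex of $M$. So $\sv(P)$ equals the number of on-axis vertices of $M$ minus one (excluding only the last), which is exactly the number of pieces in your factorization --- not the number of internal on-axis vertices, i.e.\ not ``pieces minus one.'' Consequently every piece should carry a factor $v$, and the generating function is $\frac{1}{1-vB}$, not $1+\frac{B}{1-vB}$. (A quick sanity check at $n=1$: both paths of $\G_1$ have $\sv=1$, so the coefficient of $z$ must be $2v$; your expression gives coefficient $1$.)

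Second, the one-piece series: there are two colors of on-axis horizontal steps \emph{and} two kinds of bumps ($UM'D$ and $D\overline{M'}U$), so $B=2z+2z^2F(z,z)$, not $2z+z^2F(z,z)$; with the correct factor, $B=2z+\bigl(1-2z-\sqrt{1-4z}\bigr)=1-\sqrt{1-4z}=2zC(z)$. Your simplification $2z+z^2F(z,z)=zC(z)$ is false (that quantity equals $\tfrac12\bigl(1+2z-\sqrt{1-4z}\bigr)$), and the claimed final collapse of $1+\frac{B}{1-vB}$ with $B=zC(z)$ to $\frac{1}{1-2vzC(z)}$ does not hold as an identity. Once both points are fixed --- $\sv(P)=$ number of pieces, each piece weighted $v$, piece series $2zC(z)$ --- you get $\frac{1}{1-2vz-2vz^2F(z,z)}=\frac{1}{1-2vzC(z)}$ directly, which is precisely the paper's proof.
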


\begin{proof}
Let $P\in\G_n$ and $M=\bij(P)$, where $\bij$ is the bijection from Definition~\ref{def:bij}. Then $\sv(P)$ equals the number of vertices of $M$ on the $x$-axis minus one. As in the proof of Lemma~\ref{lem:grand2Motzkin},
we can decompose $M\in\GM^2$ uniquely as a sequence of steps $H_1$ and $H_2$ on the $x$-axis, paths of the form $UM'D$, and paths of the form $D\overline{M'}U$, where $M'\in\M^2$. Since each one of these four types of blocks contributes one new symmetric vertex, it follows that
$$\sum_{n\ge0} \sum_{P\in\G_n} v^{\sv(P)} z^n = \frac{1}{1-2vz-2vz^2F(z,z)}=\frac{1}{1-2vzC(z)},$$
with $F$ given by Lemma~\ref{lem:2Motzkin} and $C(z)$ given by Equation~\eqref{eq:Cat}.
\end{proof}

The number of symmetric vertices also follows a Rayleigh limit law.

\begin{proposition}
The sequence of random variables $X_n$ giving the number of symmetric vertices of a uniformly random grand Dyck path in $\G_n$ has a Rayleigh limiting distribution. Specifically,
$$\frac{X_n}{\sqrt{n}}\stackrel{d}{\to}\Ral(\sqrt{2}),$$ 
which has density function $\frac{x}{2}e^{-x^2/4}$ for $x\ge0$, and
$$\E X_n=\sqrt{\pi n}+O(1),\qquad \V X_n=(4-\pi)n+O(\sqrt{n}),$$
$$\Pr(X_n=k)=\frac{k}{2n}e^{-k^2/4n}+O((k+1)n^{-3/2})+O(n^{-1})$$
uniformly for all $k\ge0$.
\end{proposition}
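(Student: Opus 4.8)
The plan is to apply Theorem~\ref{thm:Rayleigh} directly to the bivariate generating function obtained in Theorem~\ref{thm:sv_G}, exactly as was done for the degree of symmetry in Proposition~\ref{prop:limit-ds}. Writing $A(v,z)=\left(1-v+v\sqrt{1-4z}\right)^{-1}$, I would first identify the radius of convergence of $A(1,z)$: since $A(1,z)=(1-4z)^{-1/2}=\sum_n\binom{2n}{n}z^n$, we have $\rho=1/4$, and $z=1/4$ is the unique singularity on that circle. Next I would read off the decomposition required by hypothesis~(iii): $A(v,z)^{-1}=g(v,z)+h(v,z)\sqrt{1-z/\rho}$ with $g(v,z)=1-v$ and $h(v,z)=v$ (using $\sqrt{1-4z}=\sqrt{1-z/\rho}$ when $\rho=1/4$). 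Both are analytic, and $g(1,\rho)=0$, so~(iv) holds; the analytic continuation hypotheses~(v) and~(vi) are immediate since the only obstruction to continuing $A$ comes from the branch point at $z=1/4$, and $g(v,z)+h(v,z)\sqrt{1-4z}$ vanishes only at $z=1/4$ when $v$ is near $1$.

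Then I would check the nondegeneracy conditions~(vii): $h(1,\rho)=1>0$ and $g_v(1,\rho)=-1<0$. Theorem~\ref{thm:Rayleigh} then yields a Rayleigh limit with parameter
\[
\sigma=\frac{\sqrt{2}\,\lvert g_v(1,1/4)\rvert}{h(1,1/4)}=\sqrt{2},
\]
and substituting $\sigma=\sqrt2$ into the density $\frac{x}{\sigma^2}e^{-x^2/2\sigma^2}$ gives $\frac{x}{2}e^{-x^2/4}$, while the formulas for $\E X_n$, $\V X_n$, and the local law follow by plugging $\sigma=\sqrt2$ into the corresponding expressions in Theorem~\ref{thm:Rayleigh}. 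Optionally, one could cross-check the leading asymptotics of $\E X_n$ against a direct computation from Equation~\eqref{eq:EV} using $A_v(1,z)=\sqrt{1-4z}\,\bigl(1-1+\sqrt{1-4z}\bigr)^{-2}\cdot(\text{something})$—more cleanly, $A(v,z)=\bigl(1+(v-1)(1-\sqrt{1-4z})\bigr)^{-1}$, so $A_v(1,z)=-(1-\sqrt{1-4z})=2zC(z)-2$ in the notation of Equation~\eqref{eq:Cat}, giving $[z^n]A_v(1,z)=2C_{n-1}$ and hence $\E X_n=2C_{n-1}/\binom{2n}{n}$, whose asymptotics are $\sqrt{\pi n}+O(1)$ as claimed.

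There is essentially no hard step here: the generating function has precisely the square-root-singularity shape demanded by the Drmota–Soria framework, with $g$ and $h$ polynomials in $v$ and $\sqrt{1-4z}$, so verifying the seven hypotheses is routine bookkeeping. The only mild care needed is in hypothesis~(vi), confirming that $A(v,z)$ extends analytically to the stated two-variable neighborhood away from the cut—this holds because the denominator $1-v+v\sqrt{1-4z}$ is bounded away from zero there (for $v$ near $1$ it is close to $\sqrt{1-4z}$, which is nonzero off the branch point), so no spurious poles appear. Thus the proof is a near-verbatim repetition of the argument in Proposition~\ref{prop:limit-ds}, with the single change that $g_v(1,\rho)=-1$ rather than $-2$, doubling the Rayleigh parameter from $1/\sqrt2$ to $\sqrt2$.
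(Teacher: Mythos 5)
Your main argument is exactly the paper's proof: apply Theorem~\ref{thm:Rayleigh} to the generating function of Theorem~\ref{thm:sv_G} with $\rho=1/4$, $g(v,z)=1-v$, $h(v,z)=v$, giving $\sigma=\sqrt{2}$ and hence all the stated asymptotics; this part is correct and needs no change. One caveat: your ``optional cross-check'' is miscomputed. The identity $A(v,z)=\bigl(1+(v-1)(1-\sqrt{1-4z})\bigr)^{-1}$ is false (the correct rewriting is $A(v,z)=\bigl(1-v(1-\sqrt{1-4z})\bigr)^{-1}$), and differentiating correctly gives $A_v(1,z)=\frac{1-\sqrt{1-4z}}{1-4z}$, so $[z^n]A_v(1,z)=4^n-\binom{2n}{n}$ and $\E X_n=\frac{4^n}{\binom{2n}{n}}-1\sim\sqrt{\pi n}$, as recorded in the paper; your claimed value $2C_{n-1}/\binom{2n}{n}$ tends to $0$ and would contradict the Rayleigh mean, so drop or fix that aside (the rest of the proof does not depend on it).
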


\begin{proof}
Applying Theorem~\ref{thm:Rayleigh} to the bivariate generating function from Theorem~\ref{thm:sv_G}, we have $\rho=1/4$, $g(v,z)=1-v$, and $h(v,z)=v$, from where
$$\sigma=\frac{\sqrt{2}\,|g_v(1,1/4)|}{h(1,1/4)}=\sqrt{2}.$$

The exact expected value and the variance can also be computed directly from Equation~\eqref{eq:EV}:
\[
\E X_n=\frac{4^{n}}{\binom{2n}{n}}-1,\qquad
\V X_n=4n+2-\frac{4^{n}}{\binom{2n}{n}}-\frac{16^{n}}{\binom{2n}{n}^2}.
\qedhere
\]
\end{proof}

For $P\in\G_n$, denote by $\ret(P)$ the number of returns of $P$ to the $x$-axis, that is, the number of steps that end on the $x$-axis.

\begin{corollary}
The statistics $\sv$ and $\ret$ are equidistributed on $\G_n$; that is, for all $n,k\ge0$,
$$|\{P\in\G_n:\sv(P)=k\}|=|\{P\in\G_n:\ret(P)=k\}|.$$
\end{corollary}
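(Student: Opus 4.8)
The plan is to prove the corollary by showing that both statistics have the same generating function on $\G_n$. By Theorem~\ref{thm:sv_G}, the bivariate generating function tracking $\sv$ is $\frac{1}{1-2vzC(z)}$, so it suffices to show that
\[
\sum_{n\ge0}\sum_{P\in\G_n} v^{\ret(P)} z^n = \frac{1}{1-2vzC(z)}
\]
as well. I would obtain this via a direct "first return" decomposition of grand Dyck paths.

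First I would record the standard decomposition of a nonempty grand Dyck path $P$ according to its first step. If $P$ begins with $U$, then up to the first time $P$ returns to the $x$-axis it has the form $U Q D$ for a Dyck path $Q$ (possibly empty), followed by an arbitrary grand Dyck path; symmetrically, if $P$ begins with $D$, it has the form $D \bar{Q} U$ followed by an arbitrary grand Dyck path, where $\bar{Q}$ is a Dyck path reflected across the $x$-axis. In either case, the block $UQD$ or $D\bar{Q}U$ contributes exactly one return (the step landing back on the axis), has generating function $z\cdot z C(z) = z^2 C(z)$ in the variable $z$ marking semilength (since $Q$ has its own semilength generating function $C(z)$), and there are two choices of block. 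This yields
\[
\sum_{n\ge0}\sum_{P\in\G_n} v^{\ret(P)} z^n = \frac{1}{1-2vz^2C(z)}.
\]
Then I would use the Catalan identity $zC(z)^2 = C(z)-1$, equivalently $z C(z) = 1 - 1/C(z)$, to check that $z^2 C(z) = z C(z)\cdot zC(z)\cdot \frac{1}{zC(z)}$... more directly, $z C(z)$ and $z^2 C(z)$ must be reconciled: note $2vz C(z)$ appears in Theorem~\ref{thm:sv_G}'s denominator while here we get $2vz^2 C(z)$, so I should double-check the weight of a block. A block $UQD$ where $Q\in\DP$ has semilength generating function $zC(z)$ if $z$ marks semilength (the two extra steps $U,D$ add $1$ to the semilength, and $\sum_{Q\in\DP} z^{\text{semilength}(Q)} = C(z)$), so a block contributes $zC(z)$, not $z^2C(z)$, giving denominator $1-2vzC(z)$ and matching exactly.

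Alternatively — and this is cleaner — I would give a bijective proof by transporting the structure through $\bij$: in the proof of Theorem~\ref{thm:sv_G} the symmetric vertices of $P$ correspond to the $x$-axis vertices of $M=\bij(P)$ (minus one), and the four block types in the decomposition of $M\in\GM^2$ each contribute one axis vertex; on the other hand, the first-return decomposition of a grand Dyck path has blocks each contributing one return, with the same generating function $zC(z)$ per block and two block types at each "level" (beginning with $U$ versus $D$) matching the $H_1/H_2$ dichotomy in $\GM^2$. The main (and only) obstacle is bookkeeping: making sure the per-block generating function and the "minus one" offset (the empty path has $\sv = \ret = 0$ but the decomposition naturally produces a geometric series starting at the empty word) are handled consistently, so that both sides equal $\frac{1}{1-2vzC(z)}$ on the nose. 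Extracting the coefficient of $v^k z^n$ from this common generating function then gives the claimed equidistribution.
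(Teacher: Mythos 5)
Your proposal is correct and follows essentially the same route as the paper's primary argument: the paper's proof simply asserts that the generating function of Theorem~\ref{thm:sv_G} also counts grand Dyck paths by returns (citing A108747), and your first-return decomposition into blocks $UQD$ and $D\bar{Q}U$, each weighted $zC(z)$ and carrying exactly one return, is precisely the computation that justifies this, yielding $\frac{1}{1-2vzC(z)}$ on the nose. The paper additionally records an explicit bijection $P\mapsto Q$ on $\G_n$ (apply $\bij$ and then replace $U,D,H_1,H_2$ by $UU,DD,UD,DU$) with $\sv(P)=\ret(Q)$, which is sharper than the loose bijective sketch at the end of your proposal, but your generating-function argument alone suffices.
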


\begin{proof}
It is easy to see directly that the generating function in Theorem~\ref{thm:sv_G} enumerates grand Dyck paths with respect to the number of returns to the $x$-axis~\cite[A108747]{OEIS}, from where the result follows.

Alternatively, we can give the following bijective proof of this equality. Given $P\in\G_n$, first apply $\bij$ and let $M=\bij(P)\in\GM_n^2$. Then replace each step $U$ of $M$ with $UU$, each step $D$ with $DD$, each step $H_1$ with $UD$, and each step $H_2$ with $DU$. Let $Q$ be the resulting grand Dyck path. This map $P\mapsto Q$ is a bijection from $\G_n$ to itself with the property that $\sv(P)=\ret(Q)$. As an example, the image of the path in Figure~\ref{fig:sv} is given in Figure~\ref{fig:returns}.
\end{proof}

\begin{figure}[htb]
\centering
   \begin{tikzpicture}[scale=0.5] 
     \draw[dotted](-1,0)--(23,0);
    \draw[dotted](0,-2)--(0,3);
     \draw[thick,blue](0,0) circle(1.2pt) \U\U\D\D\D\D\U\U\U\D\D\U\U\U\U\D\D\U\D\D\D\U;
     \foreach \x in {4,8,10,12,20,22}{ \draw[red,thick] (\x,0) circle(.15);}  
    \end{tikzpicture}
   \caption{A grand Dyck path with 6 returns to the $x$-axis, highlighted in red.}\label{fig:returns}
\end{figure}

The first few coefficients of the generating functions from Theorems~\ref{thm:sym_G} and~\ref{thm:sv_G} are given in Table~\ref{tab1}.

\begin{table}[h]
\centering
\begin{tabular}{c@{\quad\qquad}c}
$\card{\{P \in \G_n: \ds(P) = k\}}$ & $\card{\{P \in \G_n: \sv(P) = k\}}$ \smallskip \\
\begin{tabular}{c||r|r|r|r|r|r|r|r|} 
 $n \setminus k$ & 0& 1 & 2 & 3 & 4 & 5 & 6\\ 
\hline\hline
1& 0&2 & & & & &   \\
\hline
2&2 &0 &4 & & & & \\
\hline
3&4 &8 &0 & 8 & & & \\
\hline
4&14 &16 &24 & 0 & 16 & & \\
\hline
5& 44 & 64 & 48 & 64 & 0 & 32 & \\
\hline
6& 148 & 208 & 216 & 128 & 160 & 0 & 64 \\
\hline
\end{tabular}
&
\begin{tabular}{c||r|r|r|r|r|r|r|} 
 $n \setminus k$ & 1 & 2 & 3 & 4 & 5 & 6\\ 
\hline\hline
1& 2 & & & & &   \\
\hline
2&2 &4 & & & & \\
\hline
3&4 &8 & 8 & & & \\
\hline
4&10 &20 &24 & 16 & & \\
\hline
5& 28 & 56 & 72 & 64 & 32 & \\
\hline
6& 84 & 168 & 224 & 224 & 160 & 64 \\
\hline
\end{tabular}
\end{tabular}
\caption{The number of grand Dyck paths of length $n\le 6$ with a given degree of symmetry (left, see Theorem~\ref{thm:sym_G}) and with a given number of symmetric vertices (right, see Theorem~\ref{thm:sv_G}).}
\label{tab1}
\end{table}

To end this section, we point out that the method used to prove Theorems~\ref{thm:sym_G} and~\ref{thm:sv_G} also gives the generating function that keeps track of both statistics simultaneously:
$$\sum_{n\ge0}\sum_{P\in\G_n}s^{\ds(P)}v^{\sv(P)}z^n=\frac{1}{1-v+2(1-s)vz+v\sqrt{1-4z}}.$$

\section{Symmetry of partitions}\label{sec:partitions}

Let $\P$ denote the set of integer partitions, that is, sequences $\lambda=(\lambda_1,\lambda_2,\dots,\lambda_k)$ where $k\ge0$ and $\lambda_1\ge\lambda_2\ge\dots\ge\lambda_k\ge1$.  The entries $\lambda_i$ are called {\em parts}.
We draw the Young diagram of $\lambda$ in English notation, by arranging boxes (unit squares) into $k$ left-justified rows, where the $i$th row from the top has $\lambda_i$ boxes for each $i$; see Figure~\ref{fig:Young} for an example.
The {\em conjugate} of $\lambda$, denoted by $\lambda'$, is the partition with parts $\lambda'_i=|\{j:\lambda_j\ge i\}|$ for $1\le i\le \lambda_1$. 
Note that $\lambda'_1$ equals the number of parts of~$\lambda$, and that the Young diagram of $\lambda'$ is obtained by transposing the Young diagram of $\lambda$.

\begin{figure}[htb]
\centering
    \begin{tikzpicture}[scale=0.5]
      \draw (0,-6) grid (1,0);
      \draw (1,-4) grid (2,0);
      \draw (2,-3) grid (4,0);
      \draw (4,-1) grid (5,0);
 \begin{scope}[shift={(9,0)}]
      \draw (6,0) grid (0,-1);
      \draw (4,-1) grid (0,-2);
      \draw (3,-2) grid (0,-4);
      \draw (1,-4) grid (0,-5);
      \end{scope}
   \end{tikzpicture}
  \caption{The Young diagrams of $\lambda=(5,4,4,2,1,1)$ and its conjugate $\lambda'=(6,4,3,3,1)$.}\label{fig:Young}
\end{figure}
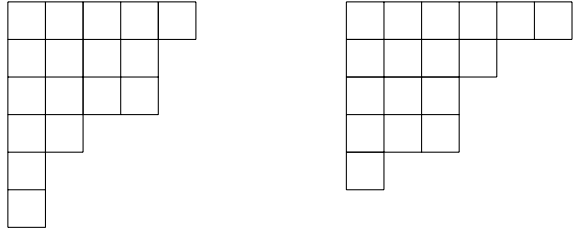

Each one of the following three subsections considers a different measure of symmetry of a partition. The first one views partitions inside a square and relates them to grand Dyck paths. The second one is perhaps the most natural measure of symmetry: the number of parts that equal the corresponding part in the conjugate partition. The third measure involves a decomposition of partitions into diagonal hooks.

\subsection{Partitions inside a square}

Let $\P^\Box_n\subset\P$ be the set of partitions $\lambda=(\lambda_1,\lambda_2,\dots,\lambda_k)$ with $k\le n$ and $\lambda_1\le n$. These can be thought of as partitions whose Young diagram fits inside an $n\times n$ square. 
For $\lambda\in \P^\Box_n$, let $\tilde\lambda=(\lambda_1,\lambda_2,\dots,\lambda_k,0,\dots,0)$ denote the sequence of length $n$ obtained by appending $n-k$ zeros to $\lambda$. 
Let $\tilde\lambda'$ be the sequence of length $n$ obtained by conjugating $\tilde\lambda$, that is, having entries $\tilde\lambda'_i=|\{j:\tilde\lambda_j\ge i\}|$ for $1\le i\le n$. 

Viewing $\lambda$ as a partition inside a square, one can define the following measure of symmetry: 
$$\ds^\Box_n(\lambda)=|\{i\in [n]:\tilde\lambda_i=\tilde\lambda'_i\}|.$$ 
Note that zeros in $\tilde\lambda$ may contribute to $\ds^\Box_n(\lambda)$.
For example, if $\lambda=(5,4,4,2,1,1)$,
then $\ds^\Box_6(\lambda)=2$ but $\ds^\Box_7(\lambda)=3$, since in the second case, $\tilde\lambda=(5,4,4,2,1,1,0)$ and $\tilde\lambda'=(6,4,3,3,1,0,0)$ coincide in positions $2$, $5$, and $7$.

To relate partitions inside a square and grand Dyck paths, we define a straightforward bijection $\partial_n:\P^\Box_n\to\G_n$ as follows.
\begin{definition}\label{def:partial}
For $\lambda\in\P^\Box_n$, let
$$\partial_n(\lambda)=D^{\tilde\lambda_n}UD^{\tilde\lambda_{n-1}-\tilde\lambda_n}UD^{\tilde\lambda_{n-2}-\tilde\lambda_{n-1}}U\dots D^{\tilde\lambda_{1}-\tilde\lambda_{2}}UD^{n-\tilde\lambda_{1}}\in\G_n.$$
\end{definition}

This bijection can be visualized by placing the Young diagram of $\tilde\lambda$ inside an $n\times n$ square (aligned with the top and left edges), reading the south-east boundary of the diagram from the south-west corner of the square to the north-east corner, and then translating north steps to $U$ steps and east steps to $D$ steps. An example is given in Figure~\ref{fig:partialn}. Under this bijection, which essentially rotates the boundary of the Young diagram by $45^\circ$, conjugating the Young diagram corresponds to reflecting the grand Dyck path along a vertical line. The following is a consequence of Theorem~\ref{thm:sym_G}.

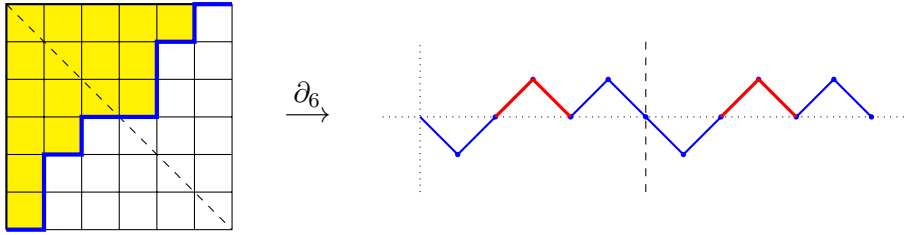
\begin{figure}[htb]
\centering
    \begin{tikzpicture}[scale=0.5]
     \draw[thick,fill=yellow] (0,-6)--(1,-6)--(1,-4)--(2,-4)--(2,-3)--(4,-3)--(4,-1)--(5,-1)--(5,0)--(0,0)--(0,-6);
     \draw[dashed](0,0)--(6,-6);
     \draw[thin] (0,-6) grid (6,0);
     \draw[ultra thick,blue] (0,-6)--(1,-6)--(1,-4)--(2,-4)--(2,-3)--(4,-3)--(4,-1)--(5,-1)--(5,0)--(6,0);
     \draw (8,-3) node  {$\longrightarrow$};
     \draw (8,-3) node[above] {$\partial_6$};
     \begin{scope}[shift={(11,-3)}]
     \draw[dotted](-1,0)--(13,0);
     \draw[dotted](0,-2)--(0,2);
     \draw[dashed](6,-2)--(6,2);
     \draw[thick,blue] (0,0) \D\U\U\D\U\D\D\U\U\D\U\D;
    \draw[red,very thick](2,0)--(3,1)--(4,0);
    \draw[red,very thick](8,0)--(9,1)--(10,0);
     \end{scope}
  \end{tikzpicture}
  \caption{The bijection $\partial_n$ from Definition~\ref{def:partial} for $n=6$ applied to $\lambda=(5,4,4,2,1,1)\in\P^\Box_6$.}\label{fig:partialn}
\end{figure}

\begin{corollary} \label{cor:sym_P} 
The generating function for partitions whose Young diagram fits inside a square with respect to the side length of the square and the statistic $\ds^\Box_n$ is
$$\sum_{n\ge0} \sum_{\lambda\in\P^\Box_n} s^{\ds^\Box_n(\lambda)} z^n = \frac{1}{2(1-s)z + \sqrt{1-4z}}.$$
\end{corollary}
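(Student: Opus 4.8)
The plan is to transfer Theorem~\ref{thm:sym_G} through the bijection $\partial_n$ of Definition~\ref{def:partial}. Since the target generating function is literally identical to the one in Theorem~\ref{thm:sym_G}, it suffices to establish two facts: that $\partial_n$ is a bijection from $\P^\Box_n$ to $\G_n$, and that it carries the statistic $\ds^\Box_n$ to the statistic $\ds$. Given those, we simply write
\[
\sum_{n\ge0}\sum_{\lambda\in\P^\Box_n} s^{\ds^\Box_n(\lambda)}z^n
=\sum_{n\ge0}\sum_{P\in\G_n} s^{\ds(P)}z^n
=\frac{1}{2(1-s)z+\sqrt{1-4z}},
\]
the last equality being Theorem~\ref{thm:sym_G}.

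First I would check that $\partial_n$ is well defined and bijective. The word $D^{\tilde\lambda_n}UD^{\tilde\lambda_{n-1}-\tilde\lambda_n}U\cdots D^{\tilde\lambda_1-\tilde\lambda_2}UD^{n-\tilde\lambda_1}$ has exactly $n$ up-steps (one for each $U$ written) and, summing the exponents telescopically, $\tilde\lambda_n+(\tilde\lambda_{n-1}-\tilde\lambda_n)+\cdots+(\tilde\lambda_1-\tilde\lambda_2)+(n-\tilde\lambda_1)=n$ down-steps, so it lies in $\G_n$; the monotonicity $0\le\tilde\lambda_n\le\cdots\le\tilde\lambda_1\le n$ guarantees all exponents are nonnegative. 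Conversely, any path in $\G_n$ has exactly $n$ up-steps, and recording the number of $D$'s before the first $U$, between consecutive $U$'s, and after the last $U$ recovers a weakly increasing sequence in $\{0,1,\dots,n\}$ of length $n$, hence a unique $\lambda\in\P^\Box_n$; this is the inverse map. (Geometrically, as the paper already notes, $\partial_n$ reads the south-east boundary of the Young diagram of $\tilde\lambda$ inside the $n\times n$ box, turning north steps into $U$ and east steps into $D$.)

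Next I would verify the statistic correspondence. The crucial geometric observation, stated in the paragraph preceding the corollary, is that transposing the Young diagram of $\tilde\lambda$ corresponds to reflecting the path $\partial_n(\lambda)$ along the vertical line $x=n$ through its midpoint: indeed, transposing the box-contents swaps the roles of north and east steps along the boundary while also reversing the order in which they are traversed, which is precisely the operation ``reverse the step sequence and swap $U\leftrightarrow D$,'' i.e.\ reflection across $x=n$. Consequently $\partial_n(\tilde\lambda')$ is the mirror image of $\partial_n(\lambda)$, so $\partial_n(\lambda)$ is symmetric in position $i$ (in the sense of $\ds$) exactly when the $i$-th step of $\partial_n(\lambda)$ agrees with the $i$-th step of $\partial_n(\tilde\lambda')$. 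It remains to translate ``the $i$-th steps agree'' into ``$\tilde\lambda_j=\tilde\lambda'_j$ for the corresponding $j$.'' Reading off the boundary, the up-steps of $\partial_n(\lambda)$ occur at positions determined by the partial sums of the sequence $(1+\tilde\lambda_n-0,\,1+\tilde\lambda_{n-1}-\tilde\lambda_n,\dots)$ and likewise for $\tilde\lambda'$; I would argue (tracking the lattice point on the box boundary reached after $i$ steps) that the $i$-th step of $\partial_n(\lambda)$ and of $\partial_n(\tilde\lambda')$ coincide precisely when $\tilde\lambda$ and $\tilde\lambda'$ agree in the entry indexed by the height reached, and that as $i$ ranges over $[n]$ these heights range over $[n]$ bijectively. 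This gives $\ds(\partial_n(\lambda))=|\{i\in[n]:\tilde\lambda_i=\tilde\lambda'_i\}|=\ds^\Box_n(\lambda)$.

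The main obstacle is the bookkeeping in this last step: matching the index $i$ of a step of the path with the correct index $j$ of the entry of $\tilde\lambda$, and confirming that this index map is a bijection $[n]\to[n]$ (so that no symmetric positions are lost or double-counted, including the contributions from trailing zeros of $\tilde\lambda$, as the $\lambda=(5,4,4,2,1,1)$ example with $n=7$ illustrates). A clean way to handle this is to place the path inside the rotated $n\times n$ box and note that its $n$ vertices at odd distances from the corners sit one in each of the $n$ antidiagonals of the box, each antidiagonal corresponding to one index of $\tilde\lambda$; a symmetric step then corresponds exactly to the diagram and its transpose crossing that antidiagonal at the same point. Once this indexing is nailed down, everything else is a formal substitution into Theorem~\ref{thm:sym_G}.
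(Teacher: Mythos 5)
Your overall route is the same as the paper's: transfer Theorem~\ref{thm:sym_G} through $\partial_n$, checking that it is a bijection and that $\ds^\Box_n(\lambda)=\ds(\partial_n(\lambda))$. The bijectivity argument and the reduction to comparing the $i$th steps of $\partial_n(\lambda)$ and $\partial_n(\tilde\lambda')$ are fine. The gap sits exactly in the step you yourself flag as the main obstacle, and both mechanisms you sketch for it are incorrect as stated. First, the claim that ``as $i$ ranges over $[n]$ these heights range over $[n]$ bijectively'' fails: for $\lambda$ equal to the full $n\times n$ square the first $n$ boundary steps are all east steps at height $0$, so the heights reached are all equal; more to the point, the index $j$ certifying a symmetric step is the \emph{column} index when that step is an east ($D$) step and the \emph{row} index when it is a north ($U$) step, and neither is a function of the height reached. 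Second, the antidiagonal correspondence is not well defined: the antidiagonal level on which the step certifying $\tilde\lambda_j=\tilde\lambda'_j$ lies depends on the common value $\tilde\lambda_j$, not only on $j$, so there is no partition-independent matching ``one antidiagonal per index''; and if ``crossing that antidiagonal at the same point'' means that the two boundaries share the lattice point on an odd antidiagonal, the count is simply wrong. Concretely, for $\lambda=(1,1)$ in the $3\times 3$ square one has $\tilde\lambda=(1,1,0)$ and $\tilde\lambda'=(2,0,0)$; the two boundary paths share a vertex on each of the three odd antidiagonals, yet $\ds^\Box_3(\lambda)=\ds(\partial_3(\lambda))=1$, since sharing a vertex does not force sharing a step.

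The repair is to index symmetric steps by the row or column they lie in rather than by their position or antidiagonal, which is what the paper does: $\tilde\lambda_j=\tilde\lambda'_j$ if and only if the $j$th $D$ step of $P=\partial_n(\lambda)$ from the left (the east step in column $j$, at height $n-\tilde\lambda'_j$, at position $n+j-\tilde\lambda'_j$ in the path) and the $j$th $U$ step from the right (the north step in row $j$, at abscissa $\tilde\lambda_j$, at position $n-j+\tilde\lambda_j+1$) are mirror images across $x=n$; this is a short computation with these explicit positions and heights. Each such mirror pair has exactly one member among the first $n$ steps, and conversely a symmetric position $i\in[n]$ forces, again by comparing positions and heights, the $D$-member and the $U$-member of the pair to carry the same index $j$. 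Equivalently, the map sending $i\in[n]$ to the column index of the $i$th step if it is a $D$ step and to its row index if it is a $U$ step is a bijection $[n]\to[n]$, and under it coincidence of the $i$th steps of $\partial_n(\lambda)$ and $\partial_n(\tilde\lambda')$ is equivalent to $\tilde\lambda_j=\tilde\lambda'_j$. With that bookkeeping in place, the rest of your argument goes through and agrees with the paper's proof.
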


\begin{proof}
Let $\lambda\in\P^\Box_n$, and let $P=\partial_n(\lambda)\in\G_n$ be given by Definition~\ref{def:partial}. Then $\tilde\lambda_i=\tilde\lambda'_i$ if and only if the $i$th $D$ step of $P$ from the left is a mirror image (with respect to the reflection along $x=n$) of its $i$th $U$ step from the right. Thus, 
\begin{equation}\label{eq:dsn}
\ds^\Box_n(\lambda)=\ds(P).
\end{equation} 
The result now follows from Theorem~\ref{thm:sym_G}.
\end{proof}

As a consequence of Corollary~\ref{cor:sym_P}, the limiting distribution of the statistic $\ds^\Box_n$ on partitions in $\P^\Box_n$ coincides with the one described in Proposition~\ref{prop:limit-ds}.

\subsection{Symmetry by self-conjugate parts}

Another natural measure of symmetry of a partition, which we call simply the degree of symmetry of $\lambda\in\P$, is defined as $$\ds(\lambda)=|\{i:\lambda_i=\lambda'_i\}|,$$ 
that is, the number of parts of $\lambda$ that equal the corresponding parts of its conjugate.
For example, if $\lambda=(5,4,4,2,1,1)$, then $\ds(\lambda)=2$ because $\lambda_2=\lambda'_2=4$ and $\lambda_5=\lambda'_5=1$, but $\lambda_i\ne\lambda'_i$ for every other $i$ for which these quantities are defined.

The following straightforward observation relates the two measures of symmetry for partitions defined so far.

\begin{lemma}\label{lem:dsm}
Let $\lambda\in\P$, and let $m=\max\{\lambda_1,\lambda'_1\}$ be the side length of the smallest square where its Young diagram fits.
Then $\ds^\Box_m(\lambda)=\ds(\lambda)$.
\end{lemma}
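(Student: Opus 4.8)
The plan is to unwind both definitions and check that the indices contributing to $\ds(\lambda)$ are exactly the ones contributing to $\ds^\Box_m(\lambda)$. Recall that $m=\max\{\lambda_1,\lambda'_1\}$, and that $\lambda'_1$ is the number of parts of $\lambda$. By definition, $\tilde\lambda=(\lambda_1,\dots,\lambda_k,0,\dots,0)$ is the padding of $\lambda$ to length $m$, and $\tilde\lambda'$ is its conjugate as a length-$m$ sequence, so that $\tilde\lambda'_i=|\{j:\tilde\lambda_j\ge i\}|$ for $1\le i\le m$. The first observation I would make is that for $1\le i\le m$, $\tilde\lambda'_i$ coincides with the $i$th part of the ordinary conjugate $\lambda'$ when $i\le\lambda_1$ (so that $\lambda'_i$ is defined and positive), and $\tilde\lambda'_i=0$ when $\lambda_1<i\le m$; this is immediate since appending zeros to $\lambda$ does not change how many parts are $\ge i$, and the length of $\tilde\lambda'$ is $m\ge\lambda'_1$.

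Next I would split the range $[m]$ into three blocks according to which of $\tilde\lambda_i$, $\tilde\lambda'_i$ are positive. For $1\le i\le\min\{k,\lambda_1\}$, both $\tilde\lambda_i=\lambda_i$ and $\tilde\lambda'_i=\lambda'_i$ are among the parts compared in the definition of $\ds(\lambda)$, and equality of the two matches exactly the condition in $\ds^\Box_m$. For indices where exactly one of the two is zero — i.e. $k<i\le\lambda_1$ (so $\tilde\lambda_i=0$ but $\tilde\lambda'_i=\lambda'_i\ge1$) or $\lambda_1<i\le k$ (so $\tilde\lambda_i=\lambda_i\ge1$ but $\tilde\lambda'_i=0$) — the two entries differ, contributing nothing to $\ds^\Box_m$; and these same indices $i$ either have $\lambda_i$ undefined or $\lambda'_i$ undefined, or have $\lambda_i\ne\lambda'_i$ (one is $0<$, but $0$ is not a part, so in the "number of parts $i$ with $\lambda_i=\lambda'_i$" count we never count such an $i$), so they contribute nothing to $\ds(\lambda)$ either. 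Finally, for $\max\{k,\lambda_1\}<i\le m$ — which is in fact empty, since $m=\max\{k,\lambda_1\}$ by definition — there is nothing to check. So the sets of contributing indices agree, giving $\ds^\Box_m(\lambda)=\ds(\lambda)$.

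The one genuinely delicate point — really the only thing to get right — is the bookkeeping around indices $i$ with $i>k$ or $i>\lambda_1$: one must confirm that $\ds(\lambda)$, as defined by "$|\{i:\lambda_i=\lambda'_i\}|$", only ever counts $i$ in the range $1\le i\le\min\{\lambda_1,\lambda'_1\}$ (for the comparison $\lambda_i=\lambda'_i$ to make sense with both sides being actual parts), and to match that against the fact that $\ds^\Box_m$ additionally considers trailing-zero positions but those never produce a match when exactly one side is zero. Since $m=\max\{\lambda_1,\lambda'_1\}$, the position $i$ for which both $\tilde\lambda_i=\tilde\lambda'_i=0$ simply does not occur (that would require $i>\lambda'_1$ and $i>\lambda_1$, impossible for $i\le m$), so the subtle case of "zeros matching zeros inflating the count" is vacuous here — this is exactly why $m$ is chosen as the \emph{smallest} such square. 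I would end by noting the contrast with the example $\ds^\Box_7$ in the preceding paragraph, where a larger square does introduce a spurious match at a trailing-zero position.
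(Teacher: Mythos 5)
Your proof is correct and follows essentially the same route as the paper's: the paper's one-line argument observes that with $m=\max\{\lambda_1,\lambda'_1\}$ at least one of $\tilde\lambda$, $\tilde\lambda'$ has no zeros, so zero entries never contribute to $\ds^\Box_m$, which is exactly the key point you isolate. Your version just spells out the same observation via an explicit case split on the index ranges, so no gap and no genuinely different method.
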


\begin{proof}
For this choice of $m$, at least one of $\tilde\lambda$ and $\tilde\lambda'$ has no zeros, and so no zero entries contribute to $\ds^\Box_m$.
\end{proof}

In the generating function from Corollary~\ref{cor:sym_P}, each partition $\lambda\in\P$ contributes as a member of $\P^\Box_n$ for every large enough value of $n$. Next we modify this generating function so that each partition is weighted by $s^{\ds(\lambda)}$ and contributes exactly once. 

\begin{corollary}\label{cor:dsmax}
The generating function for partitions with respect to the side length of the smallest square containing their Young diagram and their degree of symmetry is
$$\sum_{\lambda\in\P} s^{\ds(\lambda)} z^{\max\{\lambda_1,\lambda'_1\}} = \frac{1-sz}{2(1-s)z + \sqrt{1-4z}}.$$
\end{corollary}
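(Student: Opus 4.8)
The plan is to start from the generating function in Corollary~\ref{cor:sym_P}, which records each partition $\lambda\in\P$ once for every $n\ge\max\{\lambda_1,\lambda'_1\}$, and to ``differentiate'' it so that each partition is counted exactly once, at $n=\max\{\lambda_1,\lambda'_1\}$. Concretely, write $m=m(\lambda)=\max\{\lambda_1,\lambda'_1\}$ and $F(s,z)=\sum_{n\ge0}\sum_{\lambda\in\P^\Box_n}s^{\ds^\Box_n(\lambda)}z^n=\bigl(2(1-s)z+\sqrt{1-4z}\bigr)^{-1}$. The idea is that the desired sum $\Phi(s,z)=\sum_{\lambda\in\P}s^{\ds(\lambda)}z^{m(\lambda)}$ is obtained from $F$ by the ``first-difference in $n$'' operation: if $\lambda$ first appears in $\P^\Box_n$ at $n=m(\lambda)$, then $[z^n]$ of $(1-z)F(s,z)$ counts, with weight $s^{\ds^\Box_n(\lambda)}$, exactly those $\lambda$ with $m(\lambda)=n$. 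So the candidate identity is $\Phi(s,z)=(1-z)F(s,z)$ \emph{after} one corrects for the fact that the exponent of $s$ in $F$ is $\ds^\Box_n$, not $\ds$.

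The main step, then, is to understand the discrepancy between $\ds^\Box_n(\lambda)$ and $\ds^\Box_{n-1}(\lambda)$ for a partition $\lambda$ with $m(\lambda)\le n-1$. By Lemma~\ref{lem:dsm}, $\ds^\Box_{m(\lambda)}(\lambda)=\ds(\lambda)$; and each time we enlarge the bounding square from $n-1$ to $n$ with $n-1\ge m(\lambda)$, we append one more zero to both $\tilde\lambda$ and $\tilde\lambda'$, and these two new zero entries are equal, so $\ds^\Box_n(\lambda)=\ds^\Box_{n-1}(\lambda)+1$ for all $n> m(\lambda)$. Therefore $\ds^\Box_n(\lambda)=\ds(\lambda)+(n-m(\lambda))$ whenever $n\ge m(\lambda)$. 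Substituting this into $F$,
\[
F(s,z)=\sum_{\lambda\in\P}\;\sum_{n\ge m(\lambda)} s^{\ds(\lambda)+n-m(\lambda)}\,z^n
=\sum_{\lambda\in\P} s^{\ds(\lambda)}\, (sz)^{-m(\lambda)}\cdot\frac{(sz)^{?}}{\cdots},
\]
which after the geometric sum over $n\ge m(\lambda)$ gives
\[
F(s,z)=\sum_{\lambda\in\P} s^{\ds(\lambda)}\,\frac{z^{m(\lambda)}}{1-sz}=\frac{\Phi(s,z)}{1-sz}.
\]
Hence $\Phi(s,z)=(1-sz)F(s,z)=\dfrac{1-sz}{2(1-s)z+\sqrt{1-4z}}$, which is exactly the claimed formula.

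The step I expect to require the most care is the bookkeeping around the empty partition and small cases: the empty partition has $m=0$, contributes $s^0 z^0=1$ to $\Phi$, and one should check the geometric-series manipulation is valid there (it is: $m(\lambda)=0$ gives $\sum_{n\ge0}s^n z^n=1/(1-sz)$, consistent with $\ds^\Box_n(\emptyset)=n$). One also wants to double-check the claim ``$\ds^\Box_n(\lambda)=\ds^\Box_{n-1}(\lambda)+1$ for $n>m(\lambda)$'' — the two appended entries are both $0$ and hence equal, so this increment is exactly $+1$ and never $0$ or $+2$ — together with the base case of Lemma~\ref{lem:dsm}. Once the relation $\ds^\Box_n(\lambda)=\ds(\lambda)+n-m(\lambda)$ is established for all $n\ge m(\lambda)$, the rest is the routine geometric summation above, and no further analysis of the algebraic form of $F$ is needed.
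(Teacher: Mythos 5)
Your proposal is correct and takes essentially the same route as the paper: both hinge on the observation that $\ds^\Box_n(\lambda)=\ds(\lambda)+n-\max\{\lambda_1,\lambda'_1\}$ for $n\ge\max\{\lambda_1,\lambda'_1\}$ (the paper uses the one-step version $\ds^\Box_{j+1}(\lambda)=\ds^\Box_j(\lambda)+1$ together with Lemma~\ref{lem:dsm}) and both reduce the claim to the identity $\sum_{\lambda} s^{\ds(\lambda)}z^{\max\{\lambda_1,\lambda'_1\}}=(1-sz)\sum_{n\ge0}\sum_{\lambda\in\P^\Box_n}s^{\ds^\Box_n(\lambda)}z^n$, the paper by telescoping a difference of two double sums and you by summing a geometric series. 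Only the garbled intermediate display containing $(sz)^{?}$ needs cleaning up; the final computation is right.
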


\begin{proof}
Let $\lambda\in\P$, and let $m=\max\{\lambda_1,\lambda'_1\}$. 
Then $\ds^\Box_m(\lambda)=\ds(\lambda)$ by Lemma~\ref{lem:dsm}. It follows that
\begin{equation}\label{eq:dsdsn}
\sum_{\lambda\in\P} s^{\ds(\lambda)}  z^{\max\{\lambda_1,\lambda'_1\}}
=\sum_{\lambda\in\P} \sum_{n\ge\max\{\lambda_1,\lambda'_1\}} s^{\ds^\Box_n(\lambda)} z^n
-\sum_{\lambda\in\P} \sum_{n>\max\{\lambda_1,\lambda'_1\}} s^{\ds^\Box_n(\lambda)} z^n.
\end{equation}

By letting $j=n-1$ and noting that $\ds^\Box_{j+1}(\lambda)=\ds^\Box_j(\lambda)+1$ for $j\ge \max\{\lambda_1,\lambda'_1\}$, the subtracting term in Equation~\eqref{eq:dsdsn} can be written as
$$\sum_{\lambda\in\P} \sum_{j\ge\max\{\lambda_1,\lambda'_1\}} s^{\ds^\Box_j(\lambda)+1} z^{j+1},$$
and so the right-hand side of Equation~\eqref{eq:dsdsn} equals
$$(1-sz)\sum_{\lambda\in\P} \sum_{n\ge\max\{\lambda_1,\lambda'_1\}} s^{\ds^\Box_n(\lambda)} z^n=
(1-sz)\sum_{n\ge0} \sum_{\lambda\in\P^\Box_n} s^{\ds^\Box_n(\lambda)} z^n,$$
where we used that each partition $\lambda\in\P$ belongs to $\P^\Box_n$ for all $n\ge\max\{\lambda_1,\lambda'_1\}$.
The result now follows from Corollary~\ref{cor:sym_P}.
\end{proof}

The same argument used in Proposition~\ref{prop:limit-ds}, applied now to the generating function in Corollary~\ref{cor:dsmax}, shows that, as $m\to\infty$, the statistic $\ds$ on the set of partitions $\lambda$ with $\max\{\lambda_1,\lambda'_1\}=m$ follows again a Rayleigh limit law  $\Ral(1/\sqrt{2})$, as described in Proposition~\ref{prop:limit-ds}.

For $\lambda\in\P$, let $\sp(\lambda)=\lambda_1+\lambda'_1$ denote the semiperimeter of its Young diagram. Next we count partitions by semiperimeter. 
\begin{theorem}\label{thm:Psp}
The generating function for partitions with respect to their semiperimeter and their degree of symmetry is
\begin{equation}\label{eq:Psp}\sum_{\lambda\in\P} s^{\ds(\lambda)} z^{\sp(\lambda)} =1+\frac{z^2\left(\sqrt{1-4z^2}-(1-s)(1-2z)\right)}{(1-2z)\left(2(1-s)z^2+\sqrt{1-4z^2}\right)}.
\end{equation}
\end{theorem}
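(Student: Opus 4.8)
My plan is to decompose every partition along its Durfee square, reducing Theorem~\ref{thm:Psp} to the enumeration of pairs of partitions with boundedly many parts weighted by two ``agreement'' statistics, and then to evaluate the resulting series.

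Write a nonempty partition $\lambda$ as $(d,\alpha,\beta)$, where $d\ge1$ is the side of its Durfee square, $\alpha$ records the columns strictly to the right of the square (so $\alpha$ has at most $d$ parts and $\lambda_i=d+\alpha_i$ for $i\in[d]$), and $\beta=(\lambda_{d+1},\lambda_{d+2},\dots)$ records the rows strictly below it (so every part of $\beta$ is at most $d$). Conjugating gives $\lambda'_i=d+\beta'_i$ for $i\in[d]$ and $\lambda'_{d+k}=\alpha'_k$ for $k\ge1$, whence $\sp(\lambda)=2d+\alpha_1+\beta'_1$ and, by splitting $\ds(\lambda)=\#\{i:\lambda_i=\lambda'_i\ge1\}$ at $i=d$,
$$\ds(\lambda)=\#\{i\in[d]:\alpha_i=\beta'_i\}+\#\{k\ge1:\beta_k=\alpha'_k\ge1\}.$$
Note the first term counts all $d$ rows meeting the square, where $\lambda_i,\lambda'_i\ge d$ automatically, including those with $\alpha_i=\beta'_i=0$, whereas the second counts only the rows below the square with positive common value. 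Substituting $\delta=\beta'$ (so $\beta$ having parts $\le d$ becomes $\delta$ having $\le d$ parts, $\beta'_1=\delta_1$, $\beta_k=\delta'_k$) turns this into
$$\sum_{\lambda\in\P}s^{\ds(\lambda)}z^{\sp(\lambda)}=1+\sum_{d\ge1}z^{2d}\sum_{\alpha,\delta}s^{\,\#\{i\in[d]:\alpha_i=\delta_i\}+\#\{k\ge1:\alpha'_k=\delta'_k\ge1\}}\;z^{\alpha_1+\delta_1},$$
where $(\alpha,\delta)$ runs over all pairs of partitions with at most $d$ parts.

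The heart of the proof is the evaluation of this series. Read column by column, the pair $(\alpha,\delta)$ is a weakly decreasing sequence of column-height pairs $(\alpha'_j,\delta'_j)\in\{0,\dots,d\}^2$ ending in $(0,0)$; the exponent of $z$ is the number of nonempty columns of $\alpha$ plus that of $\delta$, the ``column'' agreement statistic is $\#\{j:\alpha'_j=\delta'_j\ge1\}$, and the ``row'' agreement statistic is visible when one reads the other way. I would evaluate the inner sum by an encoding in the spirit of the bijection $\bij$ of Definition~\ref{def:bij} together with Lemma~\ref{lem:grand2Motzkin}: the two agreement statistics behave like ``symmetric steps'' relative to the two sides of the Durfee square, but the zero-padding asymmetry noted above makes one side carry the weight $s$ and the other not, which I expect to produce a factor $G(s,z^2)$ times a factor $G(1,z^2)=1/\sqrt{1-4z^2}$, where $G(s,w)=1/\bigl(2(1-s)w+\sqrt{1-4w}\bigr)$ is the series of Theorem~\ref{thm:sym_G}. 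Summing over $d$ (geometric in $z^2$ after the encoding) and inserting inclusion--exclusion corrections for the cases $\alpha_1\le d$ or $\delta_1\le d$ (and for the empty partition) should give
$$\sum_{\lambda\in\P}s^{\ds(\lambda)}z^{\sp(\lambda)}=1+z^2(1+2z)\,G(s,z^2)\,G(1,z^2)-(1-s)z^2\,G(s,z^2),$$
and then the identity $\sqrt{1-4z^2}/(1-2z)=(1+2z)/\sqrt{1-4z^2}=(1+2z)G(1,z^2)$ rewrites the right-hand side as the expression in~\eqref{eq:Psp}. As a sanity check, at $s=1$ this collapses to $1+z^2/(1-2z)=\sum_\lambda z^{\sp(\lambda)}$, and the coefficients of $z^0,\dots,z^4$ can be verified by hand.

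The main obstacle is exactly this last step: bundling a ``row'' statistic and a ``column'' statistic of the pair $(\alpha,\delta)$ into a single generating-function computation, and in particular accounting for the part of $\alpha$ and $\delta$ overhanging the $d\times d$ square — the source of the second agreement term and of the innocuous-looking factor $1/(1-2z)$ — without double counting. A possible alternative that trades this analytic difficulty for bookkeeping is to use $\partial_m$ (Definition~\ref{def:partial}) with $m=\max\{\lambda_1,\lambda'_1\}$, which identifies the partitions with $\lambda_1\le\lambda'_1$ with the grand Dyck paths beginning with a $D$ step; under it $\sp(\lambda)$ equals twice the semilength minus the number of trailing $D$ steps and $\ds(\lambda)$ equals the degree of symmetry of the path (by~\eqref{eq:dsn} and Lemma~\ref{lem:dsm}), so that — using conjugation to recover the case $\lambda_1>\lambda'_1$ and subtracting the self-conjugate overlap — the problem reduces to Theorem~\ref{thm:sym_G} refined by a mark for trailing steps.
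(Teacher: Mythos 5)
Both of your routes stop short of the actual computation, so there is a genuine gap. In the main (Durfee-square) route, the reduction to pairs $(\alpha,\delta)$ with at most $d$ parts and the splitting of $\sp$ and $\ds$ are correct, and the intermediate identity you aim for, $1+z^2(1+2z)G(s,z^2)G(1,z^2)-(1-s)z^2G(s,z^2)$, does simplify to~\eqref{eq:Psp} (because $(1+2z)/\sqrt{1-4z^2}=\sqrt{1-4z^2}/(1-2z)$). But the evaluation of the inner sum --- the step you yourself call ``the main obstacle'' --- is only asserted (``I expect'', ``should give''), not proved, and it is not routine: the statistic $\#\{i\in[d]:\alpha_i=\delta_i\}$ depends on $d$ through the trailing zero agreements, so the sum over $d$ is not simply geometric; the parts of $\alpha$ and $\delta$ are unbounded, so the encoding of Definition~\ref{def:bij} and Lemma~\ref{lem:grand2Motzkin} (which handles pairs of paths of a fixed common length) does not apply off the shelf; and tracking the row-agreement and column-agreement statistics simultaneously is exactly the difficulty that has to be resolved rather than assumed, so the claimed factorization into $G(s,z^2)\,G(1,z^2)$ with ``inclusion--exclusion corrections'' is unsubstantiated.

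Your alternative route is essentially the paper's proof, minus its key ingredient. The paper takes $m=\max\{\lambda_1,\lambda'_1\}$, maps $\lambda$ through $\partial_m$ and then $\bij$ to a bicolored grand Motzkin path $M$ with $\ds(\lambda)=h_1^0(M)+h_2^0(M)$, and observes (as you do, in reflected form via trailing $D$ steps) that $\sp(\lambda)$ is the combined number of steps of $P_L$ and $P'_R$ not counting the initial $U$-run before the first $D$. The refinement you wave at --- ``Theorem~\ref{thm:sym_G} refined by a mark for trailing steps'' --- is precisely what the paper supplies: it assigns weight $z$ (instead of $z^2$) to the ``futile'' steps of $M$ (the $U$ and $H_2$ steps before the first $D$ or $H_1$), computes the resulting modified Motzkin series $\tilde J(z)=\bigl(\sqrt{1-4z^2}-1+2z\bigr)/\bigl(2z(1-2z)\bigr)$ from the first-return decomposition, and assembles the answer as $1+\bigl(sz^2+2z^3\tilde J(z)\bigr)G(z^2,z^2,s,s)$ by splitting into the cases $\lambda_1=\lambda'_1$, $\lambda_1>\lambda'_1$, $\lambda_1<\lambda'_1$ (your conjugation symmetry accounts for the factor $2$ on the last two). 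Until you carry out this refinement, or the analogous evaluation in the Durfee-square route, the proposal is a plan rather than a proof.
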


\begin{proof}
Let $\lambda\in\P$, and let $m=\max\{\lambda_1,\lambda'_1\}$. Let $P=\partial_m(\lambda)\in\G_m$, given by Definition~\ref{def:partial}, and let $M=\bij(P)$, where $\bij:\G_m\to\GM^2_m$ is the bijection from Definition~\ref{def:bij}. 
Then $\ds(\lambda)=\ds^\Box_m(\lambda)=\ds(P)=h_1^0(M)+h_2^0(M)$, using Lemma~\ref{lem:dsm}, Equation~\eqref{eq:dsn}, and Lemma~\ref{lem:bij_properties}.

Because of the choice of $m$, it is not possible for both $P_L$ and $P'_R$ to begin with a $U$ step.
The semiperimeter $\sp(\lambda)$ equals the combined number of steps of $P_L$ and $P'_R$, not counting any initial $U$ steps before the first $D$ in either of these paths.
To see how this statistic translates to the path $M$, let us consider three cases:

\begin{enumerate}
\item If $\lambda_1=\lambda'_1>0$, then both $P_L$ and $P'_R$ begin with a $D$ step. In this case, $M$ begins with an $H_2$ step, and $\sp(\lambda)$ simply equals twice the length of $M$. The generating function for such paths is $sz^2 G(z^2,z^2,s,s)$, with $G$ given by Lemma~\ref{lem:grand2Motzkin}.

\item If $\lambda_1>\lambda'_1$, then $P'_R$ begins with a $D$ step and $P_L$ begins with a $U$ step.
In this case, $M$ begins with a $U$ step as well, and the $U$ steps in $P_L$ before the first $D$ correspond in $M$ to what we call {\em futile} steps, defined as $U$ and $H_2$ steps before the first $D$ or $H_1$. We want to find the generating function $K(s,z)$ for paths $M\in\GM^2$ that begin with a $U$, where $s$ marks $h_1^0(M)+h_2^0(M)$, and $z$ marks twice the number of steps of $M$, minus the number of futile steps. This is equivalent to defining the weight of $M$ to be the product of its step weights, where each step is assigned weight $z^2$, except for futile steps, which are assigned weight~$z$.

Let $J(z)=F(z^2,z^2)$, with $F$ given by Lemma~\ref{lem:2Motzkin}, be the generating function for $\M^2$ where all steps have weight $z^2$. Let $\tilde{J}(z)$ be the generating function for $\M^2$ where all steps have weight $z^2$ except for futile steps, which have weight $z$.  The usual decomposition of bicolored Motzkin paths into one of $H_1M'$, $H_2M'$ or $UM'DM''$, where $M',M''\in\M^2$, gives
$$\tilde{J}(z)=1+z^2J(z)+z\tilde{J}(z)+z^3\tilde{J}(z)J(z),$$ from where 
\begin{equation}\label{eq:tildeD}
\tilde{J}(z)=\frac{1+z^2J(z)}{1-z-z^3J(z)}=\frac{\sqrt{1-4z^2}-1+2z}{2z(1-2z)}.
\end{equation}

Finally, since every $M\in\GM^2$ that begins with a $U$ can be decomposed uniquely as $U M' D M''$, where $M'\in\M^2$ and $M''\in\GM^2$,
we have that $K(s,z)=z^3 \tilde{J}(z) G(z^2,z^2,s,s)$.

\item If $\lambda_1<\lambda'_1$, then $P_L$ begins with a $D$ step and $P'_R$ begins with a $U$ step. By symmetry, the corresponding generating function is again $K(s,z)=z^3 \tilde{J}(z) G(z^2,z^2,s,s)$.
\end{enumerate}

Combining the above three cases and adding the empty partition yields
$$\sum_{\lambda\in\P} s^{\ds(\lambda)} z^{\sp(\lambda)} = 1+\left(sz^2+2 z^3 \tilde{J}(z)\right) G(z^2,z^2,s,s).$$
Using Equation~\eqref{eq:tildeD} and Lemma~\ref{lem:grand2Motzkin}, we obtain Equation~\eqref{eq:Psp}.
\end{proof}

We note that while the generating function for partitions by semiperimeter is rational, namely $1+\frac{z^2}{1-2z}$ (setting $s=1$ in Equation~\eqref{eq:Psp}), the generating function by semiperimeter and degree of symmetry is not. The first few coefficients of this algebraic generating function, given by Theorem~\ref{thm:Psp}, are shown in Table~\ref{tab:partitions}.

\begin{table}[h]
\centering
\begin{tabular}{c||r|r|r|r|r|r|r|r|} 
 $n \setminus k$ & 0& 1 & 2 & 3 & 4 \\ 
\hline\hline
2&0 &1 & 0 & 0 & 0 \\
\hline
3&2 &0 &0 &0 &0 \\
\hline
4&2 &0 &2 & 0 & 0\\
\hline
5& 4 & 4 & 0 & 0 &0 \\
\hline
6& 6 & 6 & 0 & 4 &0 \\
\hline
7& 16 & 8 & 8 & 0 &0 \\
\hline
8& 24 & 16 & 16 & 0 & 8 \\
\hline
\end{tabular}
\caption{The number of partitions with semiperimeter $n\le 8$ and degree of symmetry $k$ (see Theorem~\ref{thm:Psp}).}
\label{tab:partitions}
\end{table}

Interestingly, the distribution of the degree of symmetry on partitions with a fixed semiperimeter no longer follows a Rayleigh limit law, but rather a half-normal limit law. 

\begin{proposition}
The sequence of random variables $X_n$ giving the degree of symmetry $\ds$ of a uniformly random partition with semiperimeter $n$ has a half-normal limiting distribution. Specifically, 
\begin{equation}\label{eq:XnHn} \frac{X_n}{\sqrt{n}}\stackrel{d}{\to}\Hn\left(\frac{1}{2}\right), \end{equation}
which has density function $\frac{2\sqrt{2}}{\sqrt{\pi}}e^{-2x^2}$ for $x\ge0$, and
$$\E X_n=\sqrt{\frac{n}{2\pi}}+O(1),\qquad \V X_n=\left(\frac{1}{4}-\frac{1}{2\pi}\right)n+O(\sqrt{n}),$$
$$\Pr(X_n=k)=\frac{2\sqrt{2}}{\sqrt{\pi n}}e^{-2k^2/n}+O((k+1)n^{-3/2})+O(n^{-1})$$
uniformly for all $0\le k\le k_0\sqrt{n\log n}$ with $k_0<\frac{1}{2}$.
\end{proposition}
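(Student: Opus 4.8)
The plan is to apply Theorem~\ref{thm:half-normal} to the bivariate generating function $\Theta(s,z):=\sum_{\lambda\in\P}s^{\ds(\lambda)}z^{\sp(\lambda)}$ given by Equation~\eqref{eq:Psp}, playing the role of $A(u,z)$ with the variable $s$ in place of $u$. First I would isolate the singular behavior in $z$: the generating function in Equation~\eqref{eq:Psp} has singularities coming from the factor $1/(1-2z)$ (a pole at $z=1/2$) and from the square root $\sqrt{1-4z^2}$ (a branch point at $z=1/2$). The dominant singularity is therefore $\rho=1/2$, which governs the asymptotics since the denominator $2(1-s)z^2+\sqrt{1-4z^2}$ does not vanish there for $s$ near $1$. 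I would rewrite $\sqrt{1-4z^2}=\sqrt{1-2z}\sqrt{1+2z}$ and expand near $z=1/2$, so that $\sqrt{1-2z}$ plays the role of the quantity $\sqrt{1-z/\rho}$ (up to the constant $\sqrt{2}$, which can be absorbed into $h$). Thus I would write $\Theta(s,z)^{-1}$ — or rather the relevant inverted piece — in the form $g(s,z)+h(s,z)\sqrt{1-2z}$ with $g,h$ analytic near $(1,1/2)$, matching condition~(iii).

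The key computational step is to identify $g$ and $h$ explicitly and verify condition~(vii'). Subtracting the additive constant $1$ and taking the reciprocal of $\Theta(s,z)-1$, one gets (after clearing the $\sqrt{1+2z}$ factor into the analytic parts) an expression of the shape
\[
(\Theta(s,z)-1)^{-1}=\frac{(1-2z)\left(2(1-s)z^2+\sqrt{1-4z^2}\right)}{z^2\left(\sqrt{1-4z^2}-(1-s)(1-2z)\right)}.
\]
I would then collect the terms that are analytic at $z=1/2$ into $g(s,z)$ and the terms carrying a single factor of $\sqrt{1-2z}$ into $h(s,z)\sqrt{1-2z}$. At $s=1$ the factor $(1-s)$ vanishes, which forces several cancellations: I expect $h(1,1/2)=0$ (the square-root terms disappear when $s=1$, consistent with $\Theta(1,z)=1+z^2/(1-2z)$ being rational), and likewise $g_s(1,1/2)$ and $g_{ss}(1,1/2)$ should vanish because each factor of $(1-s)^k$ in the expansion contributes a zero of order $k$. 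Meanwhile $g_z(1,1/2)\ne0$ comes from the genuine pole of the rational function $\Theta(1,z)$ at $z=1/2$, and $h_s(1,1/2)\ne0$ because differentiating in $s$ produces exactly one surviving $\sqrt{1-2z}$ term. Checking conditions (i)--(vi) is routine: nonnegativity of coefficients is clear, $\rho=1/2$ is the unique dominant singularity (the other roots of $1-4z^2$ and $1-2z$ are at $\pm1/2$, and $z=-1/2$ is not on $|z|=1/2$ strictly inside, but one checks the numerator/denominator there — actually $z=-1/2$ also lies on $|z|=1/2$, so I would need to confirm it is a removable point or handle it, which is a minor subtlety), and analytic continuation to a slightly larger region holds since the functions are algebraic.

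With (vii') verified, Theorem~\ref{thm:half-normal} yields the half-normal limit with parameter $\sigma=\sqrt{2}\,|h_s(1,1/2)|/(\rho\,|g_z(1,1/2)|)$, and I would compute this to be $1/2$, matching Equation~\eqref{eq:XnHn}; the density, mean, variance, and local law are then immediate from the theorem's conclusions. The main obstacle I anticipate is the bookkeeping in the decomposition $\Theta^{-1}=g+h\sqrt{1-2z}$ — specifically, carefully extracting $g$ and $h$ as honest analytic functions (rationalizing the $\sqrt{1+2z}$ factor, which is analytic and nonzero near $z=1/2$, into them) and then verifying the triple vanishing $h(1,1/2)=g_s(1,1/2)=g_{ss}(1,1/2)=0$ together with the two nonvanishing conditions. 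A secondary point to be careful about is the behavior at $z=-1/2$: I would check that $\Theta(s,z)$ extends analytically past $z=-1/2$ (or that this point does not obstruct condition~(v)) by confirming the apparent singularity there is removable, so that $\rho=1/2$ is genuinely the only singularity on its circle of convergence.
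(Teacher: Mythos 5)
Your main line of attack is the same as the paper's: apply Theorem~\ref{thm:half-normal} to the series from Theorem~\ref{thm:Psp} with the constant term removed, rationalize $(\Theta(s,z)-1)^{-1}$ by multiplying by $\sqrt{1-4z^2}+(1-s)(1-2z)$, absorb the analytic factor $\sqrt{1+2z}$ into $h$, and read off $g+h\sqrt{1-2z}$. Carrying this out does give exactly the paper's functions,
$$g(s,z)=\frac{(1-2z)\bigl(1+2z+2(1-s)^2z^2\bigr)}{z^2\bigl(2(s^2-2s+2)z-s^2+2s\bigr)},\qquad h(s,z)=\frac{(1-s)(1-2z+2z^2)\sqrt{1+2z}}{z^2\bigl(2(s^2-2s+2)z-s^2+2s\bigr)},$$
and the vanishing conditions you anticipate do hold (in fact the cleanest reason for $g(1,1/2)=g_s(1,1/2)=g_{ss}(1,1/2)=0$ is not the $(1-s)^2$ factor you cite but the factor $(1-2z)$ in the numerator of $g$ together with the denominator being $1/2$ at $z=1/2$ for every $s$, so $g(s,1/2)\equiv 0$); the computation $\sigma=\sqrt{2}\,|h_s(1,1/2)|/\bigl(\tfrac12|g_z(1,1/2)|\bigr)=\sqrt2\cdot\sqrt2/4=1/2$ then comes out as you expect.

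The genuine gap is your treatment of the point $z=-1/2$. You propose to dispose of it by "confirming the apparent singularity there is removable," and earlier you assert that analytic continuation to a slightly larger region "holds since the functions are algebraic." Neither is true: for $s\neq 1$ the series $\Theta(s,z)$ has a bona fide square-root branch point at $z=-1/2$ (writing $w=\sqrt{1+2z}$, the expansion contains odd powers of $w$, since the denominator $2(1-s)z^2+w\sqrt{1-2z}$ is nonzero at $w=0$ only thanks to the $(1-s)$ term), and algebraicity does not let you continue single-valuedly past a branch point. Consequently condition~(vi) of Theorem~\ref{thm:Rayleigh}/\ref{thm:half-normal} genuinely fails for this generating function, and your plan as written stalls at that step. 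The correct repair, which is what the paper does, is to concede that (vi) fails and observe that the singularity at $z=-1/2$ is of bounded square-root type, so by singularity analysis it only contributes terms of order $2^nn^{-3/2}$ to the coefficients, subdominant to the $1/\sqrt{1-2z}$-type behavior at $z=1/2$; hence the proof of Wallner's theorem still goes through with $z=1/2$ as the driving singularity. (A smaller inaccuracy: the denominator $2(1-s)z^2+\sqrt{1-4z^2}$ does vanish at $(s,z)=(1,1/2)$, so the justification that $\rho=1/2$ is the dominant singularity should rest on $\Theta(1,z)=1+z^2/(1-2z)$ rather than on nonvanishing of that denominator.)
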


\begin{proof}
We apply Theorem~\ref{thm:half-normal} to the generating function from Theorem~\ref{thm:Psp} (without the constant term, for simplicity), with $\rho=1/2$, 
$$g(s,z)=\frac{(1-2z)(1+2z+2(1-s)^2z^2)}{z^2(2(s^2-2s+2)z-s^2+2s)}, \quad\text{and}\quad h(s,z)=\frac{(1-s)(1-2z+2z^2)\sqrt{1+2z}}{z^2(2(s^2-2s+2)z-s^2+2s)},$$
which satisfy $g(1,1/2)=h(1,1/2)=g_s(1,1/2)=g_{ss}(1,1/2)=0$. 
A technical detail here is that condition~(vi) does not hold, since $h(s,z)$ cannot be analytically continued to a region containing a neighborhood of $z=-1/2$. However, by singularity analysis, the first-order asymptotics is determined by the behavior near $z=1/2$, and so the proof of Theorem~\ref{thm:half-normal} given in~\cite{Wallner} still applies to this case.
The resulting limit law is half-normal with parameter
$$\sigma=\frac{\sqrt{2}\,|h_s(1,1/2)|}{1/2\,|g_z(1,1/2)|}=\frac{1}{2},$$
from where the remaining statements follow.
\end{proof}

\subsection{Symmetry by self-conjugate hooks}

Next we consider a third measure of symmetry for partitions. Following~\cite{BBES}, the boxes in the Young diagram of $\lambda\in\P$ can be decomposed into {\em diagonal hooks} as follows: the first hook is the largest hook, consisting of the first row and the first column; the second hook is the largest hook after the first hook has been removed, and so on (see Figure~\ref{fig:hooks} for an example).  The number of hooks in this decomposition equals the largest $\delta$ such that $\lambda_\delta\ge \delta$, also known as the side length of Durfee square of $\lambda$.  We define $\dsh(\lambda)$ to be the number of diagonal hooks in the Young diagram of $\lambda$ that are self-conjugate, that is, they have the same number of boxes in the row as in the column.

\begin{figure}[htb]
\centering
    \begin{tikzpicture}[scale=0.5]
      \fill[yellow] (0,-5) rectangle (1,0);
      \fill[yellow] (1,-1) rectangle (4,0);
      \fill[blue!50] (1,-4) rectangle (2,-1);
      \fill[blue!50] (2,-2) rectangle (4,-1);
      \fill[green!50] (2,-3) rectangle (3,-2);
      \draw (0,-5) grid (1,0);
      \draw (1,-1) grid (4,0);
      \draw (1,-4) grid (2,-1);
      \draw (2,-2) grid (4,-1);
      \draw (2,-3) grid (3,-2);
   \end{tikzpicture}
  \caption{The Young diagram of the partition $\lambda=(4,4,3,2,1)$ has two symmetric diagonal hooks, and so $\dsh(\lambda)=2$. Note that $\ds(\lambda)=3$ in this case, since $\lambda'=(5,4,3,2)$.}\label{fig:hooks}
\end{figure}
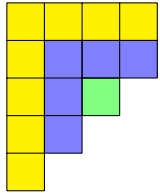

\begin{proposition}\label{prop:dsh}
The generating functions for partitions with respect to the statistic $\dsh$ and the side length of any (in the first formula) or the smallest (in the second formula) square containing their Young diagram are 
\begin{align}\label{eq:dsh}\sum_{n\ge0} \sum_{\lambda\in\P^\Box_n} s^{\dsh(\lambda)} z^n & = \frac{1}{(1-s)z + \sqrt{1-4z}}, \\
\nonumber \sum_{\lambda\in\P} s^{\dsh(\lambda)} z^{\max\{\lambda_1,\lambda'_1\}} & = \frac{1-z}{(1-s)z + \sqrt{1-4z}}.
\end{align}
\end{proposition}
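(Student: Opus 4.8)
The strategy is to decompose the Young diagram of a partition $\lambda$ into its diagonal hooks and encode this decomposition in a lattice path, in a way that tracks the side length of the ambient square and the number of self-conjugate hooks. Given $\lambda\in\P^\Box_n$ with Durfee square of side $\delta$, the $j$th diagonal hook is determined by a pair $(a_j,b_j)$, where $a_j$ is the arm length (boxes strictly to the right of the diagonal in the corresponding row) and $b_j$ is the leg length (boxes strictly below the diagonal in the corresponding column); the hook is self-conjugate exactly when $a_j=b_j$. Nesting of the hooks forces $a_1\ge a_2\ge\cdots$ and $b_1\ge b_2\ge\cdots$, and fitting inside the $n\times n$ square means $\delta+a_1\le n$ and $\delta+b_1\le n$. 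So $\P^\Box_n$ is in bijection with pairs of weakly decreasing nonnegative sequences of equal length $\delta$, each with first entry at most $n-\delta$ — equivalently, a pair of partitions $\mu,\nu$ each with at most $\delta$ parts and largest part at most $n-\delta$, i.e.\ a pair of lattice paths fitting in a $\delta\times(n-\delta)$ box — and $\dsh(\lambda)$ counts the positions $j$ where $\mu_j=\nu_j$.

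\textbf{Reduction to grand Motzkin paths.} This is precisely the setup that Lemma~\ref{lem:grand2Motzkin} handles, via the same mechanism as in Definition~\ref{def:bij}: a pair of sequences compared entrywise becomes a single path where coincidences become horizontal steps on the axis. Concretely, I would set up a bijection from $\P^\Box_n$ to a suitable set of bicolored grand Motzkin paths, where the length of the path records $\delta$, and where each step additionally carries a weight of $z$ (not just tracking step count) so that $z$ ends up marking $n$ rather than $\delta$: each hook index $j$ contributes a fixed amount to $n-\delta$ coming from the ``width'' available, handled by a geometric-series factor. The cleanest way: think of reading the staircase boundary of the Young diagram, where the $\delta$ diagonal cells split the boundary into $\delta$ up-corners, and between consecutive corners one reads off $a_j-a_{j+1}$ horizontal steps on one side and $b_j-b_{j+1}$ on the other; encoding the comparison of these two staircases by the rule of Definition~\ref{def:bij} (with an extra color to absorb the "free" width) yields a bicolored grand Motzkin path whose $H^0$ steps correspond exactly to self-conjugate hooks. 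Summing over $n$, the free width contributes the substitution that turns the plain length-generating function $G(\cdot)$ into one where $z$ marks $n$; carrying this out and invoking Lemma~\ref{lem:grand2Motzkin} with $x=y=z$ and $s_1=s_2=s$ should produce $\dfrac{1}{(1-s)z+\sqrt{1-4z}}$, i.e.\ half of what appears in Theorem~\ref{thm:sym_G} in the sense that the coefficient of $(1-s)z$ is $1$ rather than $2$ — reflecting that here the comparison is between two staircases meeting the diagonal, so only one copy of each "coincidence color" survives.

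\textbf{The second formula.} For $\sum_{\lambda}s^{\dsh(\lambda)}z^{\max\{\lambda_1,\lambda'_1\}}$, I would run exactly the same extraction-of-the-minimal-square argument used in Corollary~\ref{cor:dsmax}: each $\lambda\in\P$ lies in $\P^\Box_n$ for all $n\ge m:=\max\{\lambda_1,\lambda'_1\}$, and the key observation is that increasing the square from side $j$ to side $j+1$ adds one new outermost diagonal hook of arm length $n-1-\delta\ge 1$ on one side and $0$ (a fresh full row/column) — wait, more carefully: padding with a zero row and zero column adds a hook, but this added hook is self-conjugate only in the degenerate case; the relevant bookkeeping is that $\dsh$ is \emph{unchanged} under the padding $\lambda\mapsto\lambda$ viewed in a bigger square, since the new boundary cells do not form a self-conjugate hook when $m$ was already maximal on one side. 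Hence the telescoping $\sum_{\lambda}s^{\dsh(\lambda)}z^{\max} = (1-z)\sum_{n\ge0}\sum_{\lambda\in\P^\Box_n}s^{\dsh(\lambda)}z^n$ — with the factor $(1-z)$ rather than $(1-sz)$ precisely because the new hook is not counted by $\dsh$ — and the first formula gives the result.

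\textbf{Main obstacle.} The delicate point is \emph{not} the generating-function manipulation but pinning down the exact bijective encoding so that $z$ tracks $n$ (the square's side) rather than $\delta$ (the number of hooks), and verifying that self-conjugate hooks land on axis-level horizontal steps. One must correctly account for the "slack" $n-\delta-a_1$ and $n-\delta-b_1$ on the two sides; I expect to model this with two extra horizontal step colors, or equivalently by a clever choice of weights, and the bookkeeping of which configurations are double-counted (e.g.\ when $a_j=b_j$ for a stretch of indices) needs care analogous to the three-case analysis in the proof of Theorem~\ref{thm:Psp}. Once the encoding is correct, Lemma~\ref{lem:grand2Motzkin} does all the analytic work, and the $(1-z)$ versus $(1-sz)$ distinction in the second formula — the only genuinely new feature compared to the earlier results — follows from the structural fact that a minimal-square padding never creates a self-conjugate hook.
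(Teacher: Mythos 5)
Your write-up is a plan rather than a proof, and the key step is missing. The paper needs no new encoding at all: it composes the two bijections already available, $\partial_n:\P^\Box_n\to\G_n$ (Definition~\ref{def:partial}) and $\bij:\G_n\to\GM^2_n$ (Definition~\ref{def:bij}), and makes the one crucial observation that $\dsh(\lambda)$ equals the number of $D$ steps of $P_L$ and $P'_R$ that coincide as segments, i.e.\ exactly the $H_1$ steps at height $0$ of $\bij(\partial_n(\lambda))$ (the coinciding $U$ steps become $H_2$ steps at height $0$ and are \emph{not} counted). Since these bijections already make $z$ mark $n$, the first formula is just $G(z,z,s,1)$ from Lemma~\ref{lem:grand2Motzkin}. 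Your proposal never identifies this fact; instead you set up a fresh Frobenius-coordinate encoding and explicitly defer its core difficulty (making $z$ track $n$ rather than the Durfee side $\delta$, and placing self-conjugate hooks on axis-level steps) to a ``main obstacle'' that you do not resolve. Moreover, the specialization you invoke is wrong: $G(z,z,s,s)$ gives $\frac{1}{2(1-s)z+\sqrt{1-4z}}$ (that is Theorem~\ref{thm:sym_G}), and your subsequent remark about ``only one copy of each coincidence color'' gestures at, but does not supply, the correct choice $s_1=s$, $s_2=1$, which is precisely why the right-hand side has coefficient $1$ rather than $2$ in front of $(1-s)z$. A further inaccuracy: the arm and leg sequences of the diagonal hooks are strictly, not weakly, decreasing, so your claimed bijection with pairs of partitions in a $\delta\times(n-\delta)$ box needs repair before it could even be started.

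Your treatment of the second formula is essentially the paper's argument and is fine in substance: since $\dsh$ depends only on the Young diagram and not on the ambient square, the telescoping of Corollary~\ref{cor:dsmax} goes through with factor $(1-z)$ in place of $(1-sz)$. But your aside that ``padding with a zero row and zero column adds a hook'' is mistaken --- enlarging the square adds no boxes and hence no hooks; the point is simply that $\dsh^{}$ is constant as $n$ grows, which you do eventually assert. In summary: the second half is acceptable modulo the first, but the first formula has a genuine gap --- the bijective encoding carrying all the weight is left unconstructed, and the generating-function specialization quoted for it is not the one that yields the stated answer.
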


\begin{proof}
Let $\lambda\in\P^\Box_n$ and $P=\partial_n(\lambda)$, where $\partial_n:\P^\Box_n\to\G_n$ is the bijection from Definition~\ref{def:partial}.
Then $\dsh(\lambda)$ equals the number of $D$ steps in the first half of $P$ that are a mirror images (with respect to reflection along $x=n$) of $U$ steps in the second half; equivalently, the number of $D$ steps of $P_L$ and $P'_R$ that coincide as segments.
Via the bijection $\bij:\G_n\to\GM^2_n$ from Definition~\ref{def:bij}, such steps become $H_1$ steps 
at height $0$ of the bicolored grand Motzkin path $\bij(P)$. 

Thus, composing the two bijections, we have that $\dsh(\lambda)=h_1^0(\bij(\partial_n(\lambda)))$, and using Lemma~\ref{lem:grand2Motzkin},
$$\sum_{n\ge0} \sum_{\lambda\in\P^\Box_n} s^{\dsh(\lambda)} z^n=\sum_{M\in\GM^2} s^{h_1^0(M)} z^{|M|}=G(z,z,s,1)=\frac{1}{(1-s)z+\sqrt{1-4z}}.$$

Finally, using the same argument as in the proof of Corollary~\ref{cor:dsmax} and noting that $\dsh(\lambda)$ does not depend on the square where the Young diagram of $\lambda$ is placed, we obtain the second formula.
\end{proof}

Applying Theorem~\ref{thm:Rayleigh} to the generating functions in Proposition~\ref{prop:dsh}, we obtain the limiting distribution of the statistic $\dsh$ on partitions. The proof of the next result is identical to that of Proposition~\ref{prop:limit-ds}.

\begin{proposition}
The sequence of random variables $X_n$ giving the value of $\dsh$ of a uniformly random partition in $\P^\Box_n$ (respectively, of a uniformly random partition $\lambda$ with $\max\{\lambda_1,\lambda'_1\}=n$) has a Rayleigh limiting distribution. Specifically, 
$$\frac{X_n}{\sqrt{n}}\stackrel{d}{\to}\Ral\left(\frac{1}{2\sqrt{2}}\right),$$ 
which has density function $8xe^{-4x^2}$ for $x\ge0$, and
$$\E X_n=\frac{\sqrt{\pi}}{4}\sqrt{n}+O(1),\qquad \V X_n=\left(\frac{1}{4}-\frac{\pi}{16}\right)n+O(\sqrt{n}),$$
$$\Pr(X_n=k)=\frac{8k}{n}e^{-4k^2/n}+O((k+1)n^{-3/2})+O(n^{-1})$$
uniformly for all $k\ge0$.
\end{proposition}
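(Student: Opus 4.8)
The plan is to apply Theorem~\ref{thm:Rayleigh} to each of the two generating functions supplied by Proposition~\ref{prop:dsh}, following verbatim the strategy of Proposition~\ref{prop:limit-ds}. For the first case (partitions in $\P^\Box_n$), I would set $A(s,z)=\bigl((1-s)z+\sqrt{1-4z}\bigr)^{-1}$, so that $A(s,z)^{-1}=g(s,z)+h(s,z)\sqrt{1-z/\rho}$ with $\rho=1/4$, $g(s,z)=(1-s)z$ and $h(s,z)=1$. For the second case ($\lambda$ with $\max\{\lambda_1,\lambda_1'\}=n$), I would set $A(s,z)=(1-z)\bigl((1-s)z+\sqrt{1-4z}\bigr)^{-1}$ and take $g(s,z)=\dfrac{(1-s)z}{1-z}$, $h(s,z)=\dfrac{1}{1-z}$, both analytic on a neighbourhood of $(1,1/4)$ since $1-z$ does not vanish there.

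Next I would verify hypotheses (i)--(vi). Nonnegativity of the coefficients of $A(s,z)$ is clear from the combinatorial meaning established in Proposition~\ref{prop:dsh}. Putting $s=1$ gives $A(1,z)=1/\sqrt{1-4z}$ (resp.\ $(1-z)/\sqrt{1-4z}$), with radius of convergence $\rho=1/4$; the radical $\sqrt{1-4z}$ has its unique singularity on $|z|=1/4$ at $z=1/4$, and $g$, $h$ (together with the factor $1-z$) continue analytically to the slit region required in (vi). These checks are routine and identical to those made implicitly in Proposition~\ref{prop:limit-ds}. Then for (vii): in the first case $g(1,1/4)=0$, $h(1,1/4)=1>0$, $g_s(1,1/4)=-1/4<0$; in the second case $g(1,1/4)=0$, $h(1,1/4)=4/3>0$, $g_s(1,1/4)=-1/3<0$. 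In both cases Theorem~\ref{thm:Rayleigh} yields a Rayleigh limit with
$$\sigma=\frac{\sqrt2\,|g_s(1,1/4)|}{h(1,1/4)}=\frac{\sqrt2}{4}=\frac{1}{2\sqrt2},$$
the common value arising because the factor $1-z$ evaluated at $z=1/4$ rescales $g_s$ and $h$ by the same amount $3/4$. Substituting $\sigma=1/(2\sqrt2)$ into the formulas of Theorem~\ref{thm:Rayleigh} gives the density $\frac{x}{\sigma^2}e^{-x^2/2\sigma^2}=8xe^{-4x^2}$, the asymptotics $\E X_n=\frac{\sqrt\pi}{4}\sqrt n+O(1)$ and $\V X_n=(\frac14-\frac{\pi}{16})n+O(\sqrt n)$, and the stated local law. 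If desired, exact formulas for $\E X_n$ and $\V X_n$ can also be extracted from Equation~\eqref{eq:EV} using $A(1,z)=1/\sqrt{1-4z}$.

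I do not expect a genuine obstacle here. The only mildly delicate point is confirming hypothesis (vi), i.e.\ analytic continuation past $z=1/4$ away from the branch cut, and tracking the extra factor $1-z$ in the second generating function; but since $1-z$ is analytic and non-vanishing near $z=1/4$ it only rescales $g$ and $h$ and leaves the Rayleigh parameter $\sigma=1/(2\sqrt2)$ unchanged, which is precisely why the two parts of the proposition share the same limit law.
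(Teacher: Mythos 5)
Your proposal is correct and follows exactly the paper's route: the paper proves this proposition by applying Theorem~\ref{thm:Rayleigh} to the two generating functions of Proposition~\ref{prop:dsh}, just as in Proposition~\ref{prop:limit-ds}, and your choices of $\rho=1/4$, $g$, $h$ and the resulting computations ($\sigma=\sqrt{2}\,|g_s(1,\rho)|/h(1,\rho)=1/(2\sqrt{2})$ in both cases, with the factor $1-z$ rescaling $g_s$ and $h$ identically) are all accurate.
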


For $P\in\G_n$, denote by $\pho(P)$ the number of peaks of $P$ at height $1$, that is, occurrences of $UD$ whose middle vertex  has $y$-coordinate equal to 1.

\begin{corollary}\label{cor:dshpho}
For $n,k\ge0$,
$$|\{\lambda\in\P^\Box_n:\dsh(\lambda)=k\}|=|\{P\in\G_n:\pho(P)=k\}|.$$
\end{corollary}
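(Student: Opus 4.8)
The plan is to show that both sides have the same generating function, namely the one appearing in Equation~\eqref{eq:dsh}, and then extract coefficients. By Proposition~\ref{prop:dsh} we already know that $\sum_{n\ge0}\sum_{\lambda\in\P^\Box_n} s^{\dsh(\lambda)} z^n = \frac{1}{(1-s)z+\sqrt{1-4z}} = G(z,z,s,1)$, so it suffices to prove that $\sum_{n\ge0}\sum_{P\in\G_n} s^{\pho(P)} z^n$ equals the same expression. First I would pull this statistic back to bicolored grand Motzkin paths via the bijection $\bij:\G_n\to\GM^2_n$ from Definition~\ref{def:bij}: I need to identify, for $P\in\G_n$ and $M=\bij(P)$, which feature of $M$ is counted by $\pho(P)$. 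A peak $UD$ of $P$ at height~$1$ occurs at position $i$ (for $i$ in the second half, say) precisely when consecutive steps of $P$ go $U$ then $D$ with the middle vertex at height~$1$; translating through the definition of $\bij$, which records at each position the pair $(\ell_i,r_i)$ of directions of the corresponding steps of $P_L$ and $P'_R$, I expect that peaks at height~$1$ in $P$ correspond to $H_1$ steps of $M$ on the $x$-axis — matching exactly the statistic $h_1^0(M)$ that appeared in the proof of the first formula of Proposition~\ref{prop:dsh}.

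Once that correspondence is established, Lemma~\ref{lem:grand2Motzkin} gives
\[
\sum_{n\ge0}\sum_{P\in\G_n} s^{\pho(P)} z^n = \sum_{M\in\GM^2} s^{h_1^0(M)} z^{|M|} = G(z,z,s,1) = \frac{1}{(1-s)z+\sqrt{1-4z}},
\]
which is literally the right-hand side of Equation~\eqref{eq:dsh}. Comparing coefficients of $s^k z^n$ then yields the claimed equality $|\{\lambda\in\P^\Box_n:\dsh(\lambda)=k\}| = |\{P\in\G_n:\pho(P)=k\}|$. Alternatively, and perhaps more transparently, I could give a direct bijection on $\G_n$: compose $\partial_n$ with $\bij$ to land in $\GM^2_n$, then apply the inverse of $\bij$ restricted appropriately, obtaining a bijection $\P^\Box_n\to\G_n$ (or $\G_n\to\G_n$) sending $\dsh$ to $\pho$; but since all the machinery is already in place, the generating-function route is cleanest.

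The main obstacle is the combinatorial bookkeeping in the first step: carefully verifying that a height-$1$ peak $UD$ of $P$ maps under $\bij$ to an $H_1$ step of $M=\bij(P)$ lying on the $x$-axis, and conversely. One has to be careful about the interaction between the two halves of $P$: a peak at height~$1$ sitting entirely in $P_L$ (or entirely in $P'_R$), versus one straddling the midpoint, and to track how the local height of $M$ (which is half the difference of the heights of $P'_R$ and $P_L$, as noted in the proof of Lemma~\ref{lem:bij_properties}) records the relevant information. I expect this to work out cleanly — a height-$1$ peak in one half of $P$, say at position $i$ in $P_L$, forces $\ell_i = U$, $\ell_{i+1} = D$ there, while the reflected path $P'_R$ must supply $r_i, r_{i+1}$ in such a way that the net effect is an $H_1$ step of $M$ at height~$0$ — but writing out all the cases and their mirror images is where the real work lies.
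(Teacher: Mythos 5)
Your overall strategy (match the generating function of $\pho$ on $\G_n$ with the right-hand side of Equation~\eqref{eq:dsh}) is the same as the paper's first proof, but the key step you propose is false, and this is a genuine gap. You claim that for $P\in\G_n$ and $M=\bij(P)$ one has $\pho(P)=h_1^0(M)$, i.e.\ that height-$1$ peaks of $P$ correspond to $H_1$ steps of $M$ on the $x$-axis. An $H_1$ step of $M$ at height $0$ records, by Definition~\ref{def:bij} and the height computation in Lemma~\ref{lem:bij_properties}, a position $i\le n$ where the $i$th step of $P$ is a $D$ step whose mirror image (the $(2n+1-i)$th step) is the matching $U$ step — a nonlocal comparison between the two halves of $P$. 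A peak at height $1$ is a local pattern $UD$ anywhere in $P$. These are different statistics, only equidistributed, and no pointwise identification through $\bij$ can work: already for $n=1$, the path $P=UD$ has $\pho(P)=1$ but $\bij(P)=H_2$, so $h_1^0(\bij(P))=0$, while $P=DU$ has $\pho=0$ and $h_1^0(\bij(P))=1$. So the casework you defer to ("writing out all the cases") cannot succeed; in particular the statistic $h_1^0\circ\bij$ on $\G_n$ is exactly the image of $\dsh$ under $\partial_n$, which the corollary asserts is merely equidistributed with $\pho$, not equal to it.

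What is missing is an independent computation of $\sum_{n}\sum_{P\in\G_n}s^{\pho(P)}z^n$. The paper does this by a marking argument: substitute $s=t+1$, so $t$ marks an arbitrary subset of the height-$1$ peaks, and decompose a grand Dyck path with marked peaks as a sequence of blocks (grand Dyck path followed by a marked peak at height $1$) ended by a grand Dyck path, giving $\frac{1}{\sqrt{1-4z}-tz}$ and hence $\frac{1}{(1-s)z+\sqrt{1-4z}}$ after setting $t=s-1$. Alternatively, the paper's second (bijective) proof does not use $\partial_n$ or $\bij$ at all: it uses the bijection $\psi:\P^\Box_n\to\G_n$ of Barnabei--Bonetti--Elizalde--Silimbani built from the diagonal hook decomposition, under which the peaks of $\psi(\lambda)$ occur at heights $1+\ell_i-a_i$, so self-conjugate hooks ($a_i=\ell_i$) correspond exactly to peaks at height $1$, giving $\dsh(\lambda)=\pho(\psi(\lambda))$. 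Either of these ingredients would close the gap; your proposed route through $\bij$ will not.
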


\begin{proof}
We give two proofs of this equality. The first one consists of showing that 
\begin{equation}\label{eq:pho}\sum_{n\ge0}\sum_{P\in\G_n}s^{\pho(P)}z^n=\frac{1}{(1-s)z + \sqrt{1-4z}},
\end{equation}
from where the result follows by Equation~\eqref{eq:dsh}. 
Making the substitution $s=t+1$ in the left-hand side of Equation~\eqref{eq:pho}, we obtain
\begin{equation}\label{eq:pho_marked}\sum_{n\ge0}\sum_{P\in\G_n}(t+1)^{\pho(P)}z^n=\sum_{n\ge0}\sum_{P\in\G_n}\sum_{T} t^{|T|} z^n,
\end{equation}
where $T$ ranges over all subsets of the set of peaks of height one in $P$. One can think of $t$ as keeping track of {\em marked} peaks, which are an arbitrary subset of all peaks at height~$1$. 
Recall that the generating function for grand Dyck paths by semilength is simply $\frac{1}{\sqrt{1-4z}}$.
Since every grand Dyck path with marked peaks can be decomposed as a sequence of blocks consisting of a grand Dyck path followed by a marked peak (at height $1$), plus a grand Dyck path at the end, the generating function~\eqref{eq:pho_marked} equals
$$\frac{1}{1-\dfrac{tz}{\sqrt{1-4z}}}\,\frac{1}{\sqrt{1-4z}}=\frac{1}{\sqrt{1-4z}-tz}.$$
Setting $t=s-1$ we obtain the right-hand side of Equation~\eqref{eq:pho}.

The second proof is combinatorial. Consider the bijection $\psi:\P^\Box_n\to\G_n$ from \cite[Lem.\ 3.5]{BBES}, which can be defined as follows. Given
$\lambda\in\P^\Box_n$, let $\delta$ be the number of hooks in its diagonal hook decomposition described above. 
For $1\le i\le \delta$, let $a_i$ (resp.\ $\ell_i$) denote the arm length (resp.\ leg length) of the $i$th diagonal hook, defined as the number of boxes directly to the right (resp.\ below) of the corner box of the hook. Let
$$\psi(\lambda)=D^{a_\delta}U^{\ell_\delta+1}D^{a_{\delta-1}-a_\delta}U^{\ell_{\delta-1}-\ell_\delta}
\dots D^{a_{1}-a_2}U^{\ell_{1}-\ell_2}D^{n-a_1}U^{n-1-\ell_1}.$$
Then $\dsh(\lambda)=\pho(\psi(\lambda))$. Indeed, the peaks of $\psi(\lambda)$ occur at heights $1+\ell_\delta-a_\delta, 1+\ell_{\delta-1}-a_{\delta-1},
\dots, 1+\ell_1-a_1$, and so $\pho(\psi(\lambda))=|\{i:a_i=\ell_i\}|=\dsh(\lambda)$. Figure~\ref{fig:psi} shows an example of this construction.
\end{proof}

\begin{figure}[htb]
\centering
    \begin{tikzpicture}[scale=0.5]
      \draw (0,-5) rectangle (5,0);
      \fill[yellow] (0,-5) rectangle (1,0);
      \fill[yellow] (1,-1) rectangle (4,0);
      \fill[blue!50] (1,-4) rectangle (2,-1);
      \fill[blue!50] (2,-2) rectangle (4,-1);
      \fill[green!50] (2,-3) rectangle (3,-2);
      \draw (0,-5) grid (1,0);
      \draw (1,-1) grid (4,0);
      \draw (1,-4) grid (2,-1);
      \draw (2,-2) grid (4,-1);
      \draw (2,-3) grid (3,-2);
      \draw[dashed,thin] (0,0)--(3,-3);
   \end{tikzpicture}\quad
 \begin{tikzpicture}[scale=0.5] 
 	\draw (-2,0) node[above] {$\psi$};
 	\draw (-2,0) node {$\rightarrow$};
     \draw[dotted](-1,0)--(11,0);
     \draw[dotted](0,-2)--(0,2);
     \draw[thick,blue](0,0) circle(1.2pt) \U\D\D\U\U\D\U\U\D\D;
     \foreach \x in {1,5}{ \draw[orange,thick] (\x,1) circle(.15);} 
    \end{tikzpicture}
  \caption{The bijection $\psi$ from the proof of Corollary~\ref{cor:dshpho} applied to $\lambda=(4,4,3,2,1)\in\P^\Box_5$. Here $\delta=3$, $a_1=3$, $\ell_1=4$,  
$a_2=\ell_2=2$, $a_3=\ell_3=0$. The peaks at height $1$ in $\phi(\lambda)$ are highlighted in orange.}\label{fig:psi}
\end{figure}
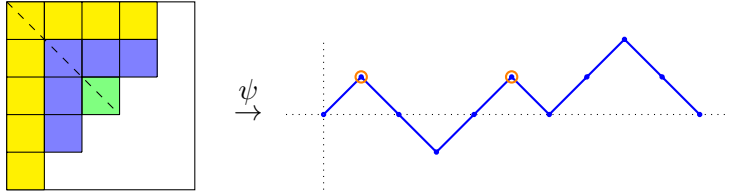

\section{Symmetry of unimodal compositions and bargraphs}\label{sec:unimodal}

A composition is a sequence of positive integers $(a_1,a_2,\dots,a_k)$ for some $k\ge1$.
We define its degree of symmetry to be the number of indices $i\le k/2$ such that $a_i=a_{k+1-i}$.
Similarly to how partitions are represented as Young diagrams, compositions can be represented as {\em bargraphs}, by arranging boxes (unit squares) into $k$ bottom-justified columns, where column $i$ from the left has $a_i$ boxes for each $i$; see Figure~\ref{fig:bargraph} for an example.
Bargraphs have been studied in the literature as a special case of column-convex polyominoes (see e.g. \cite{Fer,BMR,PB,DEbar,DEbarDyck}), and they are used in statistical physics to model polymers.

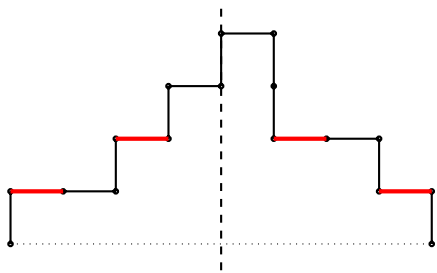
\begin{figure}[htb]
\centering
    \begin{tikzpicture}[scale=0.7]
      \draw[thin,dotted](0,0)--(8,0);
     \draw[dashed,thick](4,-.5)--(4,4.5);
     \draw[thick] (0,0) circle(1.2pt)  \N\H\H\N\H\N\H\N\H\S\S\H\H\S\H\S; 
     \draw[red,ultra thick] (0,1)--(1,1) (2,2)--(3,2) (5,2)--(6,2) (7,1)--(8,1);
\end{tikzpicture}
\caption{A unimodal bargraph $B$ with $\ds(B)=2$, $e(B)=8$ and $n(B)=4$, corresponding to the composition $(1,1,2,3,4,2,2,1)$. In this case, $B_L=EENENEN$ and $B'_R=ENEENNE$.}
\label{fig:bargraph}
\end{figure}

A bargraph can be identified with the lattice path determined by its upper boundary, namely, a self-avoiding path with steps $N=(0,1)$, $E=(1,0)$ and $S=(0,-1)$ starting at the origin and returning to the $x$-axis only at the end. 
For a bargraph $B$, let $e(B)$ denote its number of $E$ steps (also called the width of $B$), and let $n(B)$ be its number of $N$ steps. The semiperimeter of $B$ is defined as $\sp(B)=e(B)+n(B)$, 
and its degree of symmetry is defined as the degree of symmetry of the composition determined by its column heights, and denoted by $\ds(B)$.

\subsection{Unimodal bargraphs with a centered maximum}

Interpreting partitions as weakly decreasing compositions, it is natural to consider the related notion of unimodal compositions, or {\em unimodal bargraphs}; these appear, for example, in~\cite[Sec.\ 2.5]{EC1} and in~\cite[Ex.\ I.8]{Fla}, where they are called stack polyominoes. In this section we study unimodal bargraphs with a centered maximum, defined as those whose column heights satisfy $1\le a_1\le a_2\le \dots\le a_{\left\lfloor (k+1)/2 \right\rfloor}$ and 
$a_{\left\lceil (k+1)/2 \right\rceil}\ge \dots\ge a_{k-1}\ge a_k\ge1$. For example, the bargraph in Figure~\ref{fig:bargraph} is unimodal.
Let $\UB$ denote the set of unimodal bargraphs.

\begin{theorem}\label{thm:UB}
The generating function for unimodal bargraphs with a centered maximum with respect to the number of $E$ and $N$ steps and the degree of symmetry is
$$\sum_{B\in\UB}s^{\ds(B)}x^{e(B)}y^{n(B)}=\frac{y(1+x-y)}{(1-s)x^2+\sqrt{((x+1)^2-y)((x-1)^2-y)}}-y.$$
\end{theorem}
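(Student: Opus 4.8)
The approach mirrors the proofs of Theorems~\ref{thm:sym_G} and~\ref{thm:Psp}: encode a unimodal bargraph with centered maximum by a suitable pair of monotone lattice paths, fold that pair into a bicolored grand Motzkin path via a variant of the bijection $\bij$, and read off the generating function from Lemma~\ref{lem:grand2Motzkin}. Concretely, for $B\in\UB$ with upper-boundary path having $E$-steps and $N$/$S$-steps, cut the boundary at the column(s) achieving the maximum height and look at the two ``staircase'' paths $B_L$ (from the origin, using $N$ and $E$ steps, going up to the peak) and $B_R$ (the portion after the peak, using $S$ and $E$ steps), then let $B'_R$ be the reflection of $B_R$ across the vertical line through the midpoint, so that $B'_R$ is also an $N/E$ path from the origin to the same horizontal coordinate (see Figure~\ref{fig:bargraph}, where $B_L=EENENEN$ and $B'_R=ENEENNE$). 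The degree of symmetry $\ds(B)$ counts the indices $i$ for which $a_i=a_{k+1-i}$, which — exactly as in Lemma~\ref{lem:bij_properties} — translates into the number of positions where the two paths $B_L$ and $B'_R$ have the same step, i.e.\ the number of steps on which they overlap.

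\textbf{Key steps.} First I would set up the correspondence between $B$ and the pair $(B_L,B'_R)$ carefully, taking into account that the peak column is counted in exactly one of the two paths (depending on parity of $k$ and whether the maximum is attained uniquely), so that the total step counts work out: one $E$ step of $B$ contributes to the width, and together $B_L$ and $B'_R$ account for all $E$ and $N$ steps. Then I would apply the step-by-step folding map of Definition~\ref{def:bij} to the pair $(B_L,B'_R)$: writing $\ell_i,r_i\in\{N,E\}$ for the $i$th steps, send a position to $U$ if $(\ell_i,r_i)=(N,E)$, to $D$ if $(E,N)$, to $H_1$ if $(E,E)$, and to $H_2$ if $(N,N)$; the resulting object is a bicolored grand Motzkin path $M$ with $\ds(B)=h_1^0(M)+h_2^0(M)$ by the same height-difference argument as in Lemma~\ref{lem:bij_properties}. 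The bookkeeping for the exponents of $x$ (widths / $E$-steps) and $y$ ($N$-steps) is the delicate part: an overlap step contributes one to both paths' step budgets but only one unit of width or one unit of height to $B$, while a $U$ or $D$ step of $M$ contributes one $E$ and one $N$ to $B$; I would track this by assigning appropriate monomials in $x,y$ (and an $s$ for each $H_1^0$ or $H_2^0$ step) to the four step types and to the initial/final adjustments for the peak column, then specialize the four variables of $G(x,y,s_1,s_2)$ accordingly — presumably $G$ evaluated at arguments built from $x$ and $y$ with $s_1=s_2=s$, multiplied by a prefactor for the mandatory peak column and corrected by the $-y$ term accounting for the degenerate one-column bargraph.

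\textbf{Main obstacle.} I expect the genuine difficulty to be the exponent bookkeeping rather than the bijection itself. Unlike the grand Dyck case, here the two ``directions'' of the folded path are weighted asymmetrically ($N$ vs.\ $E$ steps carry different variables $y$ vs.\ $x$), so the four step types of $M$ must be given distinct weights: a $U$-step should get $xy$ (it produces one $E$ and one $N$), a $D$-step likewise $xy$, an $H_1$-step (a shared $E$) gets $x$, and an $H_2$-step (a shared $N$) gets $y$, and then one must verify that the generating function for such weighted bicolored grand Motzkin paths, with $s$ marking the horizontal steps at height $0$, is exactly $G$ with $x\mapsto xy$-type substitutions that reproduce the radicand $((x+1)^2-y)((x-1)^2-y)=(1-x-y)^2\cdot(\text{rescaling})$ — indeed one checks $((x+1)^2-y)((x-1)^2-y)$ factors so that after the substitution $1-x-y\rightsquigarrow$ the right combination it matches $(1-X-Y)^2-4XY$ with $X,Y$ the effective variables. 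Getting this algebraic identity to come out, together with the correct prefactor $y(1+x-y)$ and the additive correction $-y$ (which removes the overcounted or undercounted single-column bargraphs $B=(m)$ whose contribution $\sum_{m\ge1}sx y^m$ or similar must be reconciled with the convention that $\ds((m))=0$ when $k=1$), is where I would spend the real effort; once the weighted version of Lemma~\ref{lem:grand2Motzkin} is in hand, the stated closed form follows by direct simplification.
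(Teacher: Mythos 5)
Your overall strategy (split the boundary into two halves, fold the pair of $N/E$ paths into a bicolored grand Motzkin path as in Definition~\ref{def:bij}, and invoke Lemma~\ref{lem:grand2Motzkin}) is exactly the paper's, but the proposal contains a genuine error in the one place where this problem differs from the grand Dyck case: the claim that $\ds(B)=h_1^0(M)+h_2^0(M)$, and the corresponding specialization $s_1=s_2=s$ in $G$. The degree of symmetry of a bargraph counts indices with $a_i=a_{k+1-i}$, i.e.\ mirrored columns of equal height; these correspond precisely to the \emph{shared $E$ steps} of $B_L$ and $B'_R$, hence to $H_1$ steps of $M$ at height $0$ only. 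Shared $N$ steps ($H_2$ at height $0$) record that both halves of the boundary rise at the same place and do not witness an equality $a_i=a_{k+1-i}$. (For the bargraph of Figure~\ref{fig:bargraph}, $M=H_1DUH_1H_2DU$, so $h_1^0=2=\ds(B)$ while $h_1^0+h_2^0=3$.) The correct specialization is $G(\,\cdot\,,\cdot\,,s,1)$, which is what produces the term $(1-s)x^2$ alone in the denominator of the stated formula; your choice $s_1=s_2=s$ would instead give $(1-s)(x^2+y)$ and a different (wrong) generating function.

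Your $x,y$ bookkeeping is also off, for a related reason. A $D$ step of $M$ means $\ell_i=E$ and $r_i=N$; the $N$ of $B'_R$ is an $S$ step of $B$, which does not count toward $n(B)$, so a $D$ step should carry weight $x$, not $xy$. An $H_1$ step is a shared $E$ segment, but it represents \emph{two} distinct $E$ steps of $B$ (the mirrored columns $i$ and $k+1-i$ both have that height), so it should carry $x^2$, not $x$. With the correct weights ($U\mapsto xy$, $D\mapsto x$, $H_1\mapsto x^2$, $H_2\mapsto y$) one gets exactly $G(x^2,y,s,1)$, matching the radicand $((x+1)^2-y)((x-1)^2-y)=(1-x^2-y)^2-4x^2y$. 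Two further points need care: the halves must be cut at the middle of the \emph{width} (at the middle $E$ step when the width is odd; at the midpoint when it is even, in which case $B_L$ and $B'_R$ may not both end in $N$), not at the maximum column(s), since the maximum plateau need not be centrally placed; and the final corrections come from this parity analysis — the odd case gives $xyG(x^2,y,s,1)$, the even case gives $y[(1-y)G(x^2,y,s,1)-1]$, so the prefactor $y(1+x-y)$ and the trailing $-y$ (which removes the empty bargraph, not single-column ones) fall out of summing the two cases rather than from adjustments at a ``peak column.''
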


\begin{proof}
Let $B\in\UB$. If $B$ has odd width, say $2j+1$, by splitting the associated lattice path at the middle $E$ step, we can decompose it as $B=NB_LEB_RS$.
Let $B'_R$ be the path obtained by reflecting $B_R$ along the vertical line $x=j+1/2$, which passes through the center of this middle $E$ step.
Then $B_L$ and $B'_R$ are lattice paths with $N$ and $E$ steps from the origin to some common endpoint.

Suppose now that $B$ has even width, say $2j$. We can uniquely split $B$ in the middle and write $B=NB_LB_RS$ in such a way that, if 
$B'_R$ is the path obtained by reflecting $B_R$ along the vertical line $x=j$, then $B_L$ and $B'_R$ are lattice paths with $N$ and $E$ steps from the origin to some common endpoint, with the caveat that now they cannot both end with an $N$ step.

Along the lines of Definition~\ref{def:bij}, we describe a bijection between pairs of paths with $N$ and $E$ steps from the origin to a common endpoint and bicolored grand Motzkin paths.
Denoting by $\ell_i$ and $r_i$ the $i$th step of $B_L$ and $B'_R$, respectively, we construct a path $M\in\GM^2$ by letting its $i$th step be equal to 
$$\begin{cases} U & \text{ if } \ell_i=N \text{ and }r_i=E,\\
D & \text{ if } \ell_i=E \text{ and }r_i=N,\\
H_1 & \text{ if } \ell_i=r_i=E,\\
H_2 & \text{ if } \ell_i=r_i=N.
\end{cases}$$

Then $\ds(B)$ is equal to the number of common (i.e. coinciding as segments) $E$ steps of $B_L$ and $B'_R$, which in turn equals $h_1^0(M)$.
If $B$ has even width, then $$e(B)=e(B_L)+e(B'_R)=u(M)+d(M)+2h_1(M)=2(d(M)+h_1(M)).$$ 
A similar equality, but with the left-hand side replaced with $e(B)-1$, holds when $B$ has odd width. Finally, 
$$n(B)=n(B_L)+1=u(M)+h_2(M)+1.$$

With $G(x,y,s_1,s_2)$ defined as in Lemma~\ref{lem:grand2Motzkin}, it follows that the generating function for bargraphs in $\UB$ of odd width is 
$$xyG(x^2,y,s,1),$$ and the one for those of even width is
$$y[(1-y)G(x^2,y,s,1)-1].$$
In the last formula,  the $-y$ term subtracts pairs of paths $B_L$ and $B'_R$ ending both with an $N$, and the $-1$ at the end removes the possibility that both $B_L$ and $B'_R$ are empty. Summing these two expressions, we obtain
$$\sum_{B\in\UB}s^{\ds(B)}x^{e(B)}y^{n(B)}=y(1+x-y)G(x^2,y,s,1)-y,$$
which, after using Lemma~\ref{lem:grand2Motzkin}, equals the stated formula.
\end{proof}

Setting $x=z$ and $y=z$ in Theorem~\ref{thm:UB}, we obtain the generating function for unimodal bargraphs with a centered maximum where $z$ marks the semiperimeter:
\begin{equation}\label{eq:UB}
\sum_{B\in\UB}s^{\ds(B)}z^{\sp(B)}=\frac{z}{(1-s)z^2+\sqrt{1-2z-z^2-2z^3+z^4}}-z.
\end{equation}

\begin{proposition}
The sequence of random variables $X_n$ giving the degree of symmetry of a uniformly random element of $\UB$ with semiperimeter $n$ has a Rayleigh limiting distribution, that is,
$$\frac{X_n}{\sqrt{n}}\stackrel{d}{\to}\Ral\left(\frac{3-\sqrt{5}}{2\sqrt[4]{20}}\right).$$ 
\end{proposition}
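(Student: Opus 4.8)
The plan is to apply Theorem~\ref{thm:Rayleigh} to the bivariate generating function $A(s,z)=\sum_{B\in\UB}s^{\ds(B)}z^{\sp(B)}$ given by Equation~\eqref{eq:UB}, exactly as in the proofs of Proposition~\ref{prop:limit-ds} and the corresponding statements for partitions. First I would discard the irrelevant additive constant (here the $-z$ term, and I should also drop the empty-bargraph issue) since it does not affect the singular behavior, and write $A(s,z)^{-1}$ in the form $g(s,z)+h(s,z)\sqrt{1-z/\rho}$. From Equation~\eqref{eq:UB}, $A(s,z)+z = \dfrac{z}{(1-s)z^2+\sqrt{Q(z)}}$ where $Q(z)=1-2z-z^2-2z^3+z^4$, so $(A+z)^{-1} = \dfrac{(1-s)z^2+\sqrt{Q(z)}}{z} = (1-s)z + \dfrac{\sqrt{Q(z)}}{z}$. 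The dominant singularity $\rho$ is the smallest positive root of $Q(z)$; one checks $Q(z)=(1-z-z^2)^2-z^3 \cdot(\text{something})$ — more directly, the relevant root is $\rho=\frac{3-\sqrt5}{2}$, which can be verified by noting that $Q$ factors through the golden-ratio-type quantities, and in any case $\rho$ is algebraic and its value is pinned down by the claimed $\sigma$.

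Next I would factor $Q(z) = (z-\rho)\,\widetilde Q(z)$ near $z=\rho$, so that $\sqrt{Q(z)} = \sqrt{-\rho\,\widetilde Q(\rho)}\cdot\sqrt{1-z/\rho}\cdot(1+O(z-\rho))$, and expand to identify the analytic functions $g$ and $h$ in condition~(iii): $g(s,z)=(1-s)z + \dfrac{\text{(analytic part of }\sqrt{Q(z)}/z\text{)}}{1}$ and $h(s,z)=\dfrac{\sqrt{-\rho\,\widetilde Q(\rho)}}{z}\cdot(\text{analytic factor})$, chosen so that $g(1,\rho)=0$ (condition~(iv)), which holds because at $s=1$ the term $(1-s)z$ vanishes and $\sqrt{Q(\rho)}=0$. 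Conditions (i), (ii), (v) are routine: nonnegative coefficients are clear from the combinatorial interpretation, $\rho$ is the radius of convergence of $A(1,z)=\frac{z}{\sqrt{Q(z)}}-z$, and one checks $Q$ has no other root of modulus $\rho$ (its other positive root $\frac{3+\sqrt5}{2}$ is larger, and the two negative roots have larger modulus as well). Condition~(vi), analytic continuation in the cut plane, follows since $A(s,z)$ is an explicit algebraic function. Finally I would compute $g_s(1,\rho)$ and $h(1,\rho)$ and apply the formula $\sigma=\sqrt2\,|g_s(1,\rho)|/h(1,\rho)$; since $g(s,z)-g(1,z)=(1-s)z$ we get $g_s(1,\rho)=-\rho$, so $\sigma = \sqrt2\,\rho / h(1,\rho)$, and plugging in $\rho=\frac{3-\sqrt5}{2}$ and $h(1,\rho)=\sqrt{-\rho\,\widetilde Q(\rho)}/\rho$ should yield $\sigma=\frac{3-\sqrt5}{2\sqrt[4]{20}}$ after simplification (the $\sqrt[4]{20}$ arising from $\sqrt{Q'(\rho)}$ up to the factor of $\rho$).

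The main obstacle I anticipate is purely computational rather than conceptual: carefully factoring $Q(z)=1-2z-z^2-2z^3+z^4$, identifying the correct smallest positive root $\rho$, and then simplifying the radical expression $\sqrt2\,\rho/h(1,\rho)$ down to the stated closed form $\frac{3-\sqrt5}{2\sqrt[4]{20}}$. One must also double-check condition~(vii) of Theorem~\ref{thm:Rayleigh}, i.e. that $h(1,\rho)>0$ (immediate, as it is a positive square root) and $g_s(1,\rho)=-\rho<0$ (immediate). A secondary technical point, as in the partition case, is to confirm that the square root $\sqrt{Q(z)}$ does not introduce a spurious singularity inside $|z|<\rho+\delta$ from the other roots of $Q$; since the nearest other root is at distance bounded away from $\rho$, choosing $\delta$ small handles this, so condition~(vi) genuinely holds here and no caveat is needed. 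I would present the verification of conditions (i)--(vii) tersely and devote the bulk of the write-up to the extraction of $\rho$ and the final simplification of $\sigma$.
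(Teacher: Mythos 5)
Your proposal is correct and follows essentially the same route as the paper: apply Theorem~\ref{thm:Rayleigh} to Equation~\eqref{eq:UB} with the $-z$ dropped, take $\rho=\frac{3-\sqrt{5}}{2}$, $g(s,z)=(1-s)z$ (the analytic remainder you allow for in $g$ is in fact zero, since $Q(z)=1-2z-z^2-2z^3+z^4=(1-3z+z^2)(1+z+z^2)$ factors off $\sqrt{1-z/\rho}$ cleanly, giving the paper's $h(s,z)=\frac{1}{z}\sqrt{(1-\rho z)(1+z+z^2)}$, equivalent to your $h(1,\rho)=\sqrt{-\rho Q'(\rho)}/\rho$), and then $\sigma=\sqrt{2}\,\rho^2/\sqrt{-\rho Q'(\rho)}=\rho/\sqrt[4]{20}$, which does simplify to the stated constant. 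The only cosmetic slips are that the remaining roots of $Q$ are $\frac{3+\sqrt{5}}{2}$ and the complex pair $\frac{-1\pm i\sqrt{3}}{2}$ of modulus $1$ (not negative reals), and that $\rho$ should be justified by the factorization or direct substitution rather than being ``pinned down by the claimed $\sigma$.''
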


\begin{proof}
We apply Theorem~\ref{thm:Rayleigh} to the bivariate generating function from Equation~\eqref{eq:UB} (removing the term $-z$ at the end for simplicity), with $\rho=\frac{3-\sqrt{5}}{2}$, $g(s,z)=(1-s)z$, and $h(s,z)=\frac{1}{z}\sqrt{(1-\rho z)(1+z+z^2)}$. The resulting Rayleigh distribution has parameter
\[
\sigma=\frac{\sqrt{2}\,|g_s(1,\rho)|}{h(1,\rho)}=\frac{\rho}{\sqrt[4]{20}}.
\qedhere
\]
\end{proof}

\subsection{A bijection with grand Motzkin paths having no peaks}\label{sec:peakless}

Next we discuss a connection between unimodal bargraphs with a centered maximum and grand Motzkin paths with no peaks. 
Consider the generating function
$$G(z^2,z,1,1)=\frac{1}{\sqrt{1-2z-z^2-2z^3+z^4}}$$
for bicolored grand Motzkin paths where steps $H_1$ and $D$ have weight $z^2$, and steps $H_2$ and $U$ have weight $z$. In this section, such paths with total weight $z^n$ will be called {\em uneven bicolored grand Motzkin paths of size $n$}; the same term without the word {\em grand} will refer to paths not allowed to go below the $x$-axis.
Setting $s=1$ in Equation~\eqref{eq:UB}, we see that a shift of $G(z^2,z,1,1)$ gives the generating function for bargraphs in $\UB$ by semiperimeter. 
The coefficients of this generating function appear as sequence A051286 in~\cite{OEIS}. 
In~\cite{BonKno}, B\'ona and Knopfmacher show that uneven bicolored grand Motzkin paths of size $n$ are in bijection with ordered pairs of compositions of $n$ into parts equal to $1$ and $2$ having the same number of parts, as well as with order ideals of size $n$ of the fence poset with $2n$ elements. Interestingly, the same generating function also enumerates grand Motzkin paths with no peaks $UD$. The goal of this subsection is to give a bijective proof of this fact.

In what follows, the terms {\em Motzkin path} and {\em grand Motzkin path}, when not preceded by the word {\em bicolored}, refer to the standard paths that only have one kind of horizontal step, denoted by $H$.

We first consider the case of paths that do not go below the $x$-axis. The generating function for uneven bicolored Motzkin paths is
\begin{equation}\label{eq:peaklessMotzkin}
F(z^2,z)=\frac{1-z-z^2-\sqrt{1-2z-z^2-2z^3+z^4}}{2z^3},
\end{equation}
which coincides with the generating function for Motzkin paths with no valleys, see~\cite[A004148]{OEIS}. Additionally, $1+zF(z^2,z)$ equals the generating function for Motzkin paths with no peaks.
Let us prove these equalities bijectively. 

\begin{proposition}\label{prop:bijections}
There are explicit bijections between the following sets:
\begin{enumerate}[(i)]
\item Motzkin paths of length $n+1$ with no peaks $UD$;
\item Motzkin paths of length $n$ with no valleys $DU$;
\item uneven bicolored Motzkin paths of size $n$.
\end{enumerate}
\end{proposition}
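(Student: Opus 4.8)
The plan is to establish the chain of bijections (i) $\leftrightarrow$ (ii) $\leftrightarrow$ (iii) by giving each arrow explicitly, and it is cleanest to route everything through the ``decoration'' description of uneven bicolored Motzkin paths. First I would record the combinatorial meaning of size for an uneven bicolored Motzkin path $M$: since $U$ and $H_2$ carry weight $z$ while $D$ and $H_1$ carry weight $z^2$, the size is $u(M)+h_2(M)+2d(M)+2h_1(M)$, and because $u(M)=d(M)$ this is $3u(M)+h_2(M)+2h_1(M)$, or equivalently $|M|+u(M)+h_1(M)$ (total number of steps plus number of $U$ steps plus number of $H_1$ steps). This reformulation is what controls the lengths $n$ and $n+1$ appearing in (ii) and (i).

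For (ii) $\leftrightarrow$ (iii): given an uneven bicolored Motzkin path of size $n$, I would replace each $H_1$ step by the two steps $UD$, keep each $U$ step as $U$ followed by an $H$, keep each $D$ step as $D$, and replace each $H_2$ step by $H$. (One must check that the resulting path is an honest Motzkin path; this follows because $U$ and $H_1$ both have nonnegative effect on height and the substitutions preserve the ``never go below the axis'' property — an $H_1$ at height $0$ becomes a $UD$ at height $0$, which is fine.) The image has no valley $DU$: a $D$ in the image comes only from a $D$ of $M$ or the second half of an $H_1\to UD$, and in neither case is it immediately followed by a $U$, since every $U$ in the image is immediately followed by an $H$ or is the first half of an $H_1$-block whose second step is a $D$. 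Counting steps, the new length is exactly $|M|+u(M)+h_1(M)=n$, as required. The inverse reads a valley-free Motzkin path, groups each maximal run, and recognizes the patterns $UD$, $UH$, $D$, $H$ as the preimages of $H_1,U,D,H_2$ respectively; valley-freeness guarantees this parsing is unambiguous.

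For (i) $\leftrightarrow$ (ii): the standard ``reverse-and-complement'' type involution does not quite work since peaks and valleys are not exchanged by a single reflection on Motzkin paths, so instead I would go through (iii) again, or directly: given a peakless Motzkin path $Q$ of length $n+1$, strip its first step (which, being peakless, forces a bit of care) — actually the cleaner route is to build the bijection (i) $\leftrightarrow$ (iii) by the mirror substitution: replace each peak-free structure $DU$... rather, replace $H_1\mapsto DU$ read right-to-left — in practice I would define (i) $\leftrightarrow$ (iii) as the composition of (i) $\leftrightarrow$ (ii) with (ii) $\leftrightarrow$ (iii), where (i) $\leftrightarrow$ (ii) is obtained by reversing the path and swapping $U\leftrightarrow D$, which turns peaks into valleys and changes the length only by... no: reversal preserves length. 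So (i) and (ii) would then have equal length, contradicting the $n+1$ versus $n$ discrepancy. Hence the genuine content is that the length drops by one, which means the bijection (i) $\leftrightarrow$ (iii) must itself absorb the ``$+u(M)+h_1(M)$'' shift, and the $+1$ in (i) corresponds to a single distinguished extra step; concretely, a peakless Motzkin path of length $n+1$ decomposes after its first step, and matching $1+zF(z^2,z)=\sum$ (peakless Motzkin paths, one marked horizontal-like prefix step) is exactly the statement ``$1 + z\cdot(\text{uneven bicolored Motzkin})$''. So I would prove (i) $\leftrightarrow$ (iii) by showing a nonempty peakless Motzkin path of length $n+1$ is uniquely $H Q'$ or $U Q' D Q''$ with $Q'$ peakless of length... and match these recursively against the decomposition of uneven bicolored Motzkin paths from Lemma~\ref{lem:2Motzkin}.

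The main obstacle I expect is bookkeeping the off-by-one and the weight shift consistently: one must make sure the three ``sizes'' (length $n+1$ with no peaks, length $n$ with no valleys, and size $n$ in the weighted sense) are matched by the same map, and in particular that the first-step decomposition of peakless paths lines up with the recursive decomposition $F(z^2,z) = 1 + z^2\cdot\text{(stuff)} + \dots$ underlying Equation~\eqref{eq:peaklessMotzkin}. Once the ``never below axis'' and ``no peak/valley'' invariants are checked for the local substitution rules, the rest is a routine induction on length, and the grand (two-sided) versions claimed before the proposition follow by applying the same substitution rules step-by-step to a grand path, since the rules are local and do not care about the sign of the current height.
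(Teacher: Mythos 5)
Your reduction of the weight to ``size $=|M|+u(M)+h_1(M)$'' is correct, but the key step (ii)$\leftrightarrow$(iii) fails as stated. The local substitution $H_1\mapsto UD$, $U\mapsto UH$, $D\mapsto D$, $H_2\mapsto H$ does preserve size and nonnegativity, but its image is not contained in the valley-free paths: whenever $M$ contains a factor $DU$ (or $DH_1$), the image contains the valley $DU$. For instance $M=UDUD$, an uneven bicolored Motzkin path of size $6$, maps to $UHDUHD$, which has a valley. Your justification only controls what follows each $U$, whereas a valley is a condition on what follows a $D$. Consistently, the proposed inverse parsing into blocks $UD,UH,D,H$ breaks on valley-free paths containing $UU$, e.g.\ $UUDHD$. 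What your rule actually produces is a bijection from (iii) onto Motzkin paths of length $n$ with no $UU$ factor --- a set equinumerous with (ii) but not equal to it --- so the genuinely nontrivial content (getting to the $DU$-avoiding paths) is still missing. This is precisely where the paper's map $\theta$ is non-local: for each $D$ of a valley-free path it either deletes the $H$ that follows it, or converts the matching $U$ into an $H_1$ and deletes the $D$; and its inverse re-inserts the $D$ associated with an $H_1$ not adjacently, but just before the first later $D$ at which the path dips below the height of that $H_1$ (or at the end). No purely local substitution can work here: the path $H_1H_1$ must go to $UUDD$, not to $UDUD$.

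The step (i)$\leftrightarrow$(ii) is also not actually established: you rightly discard reversal (it preserves length), but then only sketch a recursive matching of first-step decompositions against the equation behind $F(z^2,z)$, which amounts to an identity of generating functions rather than the explicit bijection the proposition asks for, and you do not carry out the bookkeeping you identify as the obstacle. The paper's map $\mu$ is short and explicit: in a peakless Motzkin path replace every factor $DH^iU$ by $DH^{i+1}U$ and every $UH^{i+1}D$ by $UH^iD$ (peaklessness guarantees at least one $H$ inside each $U\cdots D$ pair), with the all-$H$ path handled by deleting one $H$; since in a nonempty peakless path the number of $UH^{i+1}D$ factors exceeds the number of $DH^iU$ factors by one, the length drops from $n+1$ to $n$ and the result has no valleys. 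Finally, your closing remark that the grand case follows by applying the same local rules step-by-step is not accurate either: the paper's Theorem~\ref{thm:Theta} requires decomposing the grand path along the $x$-axis and applying $\theta\circ\mu$ to the blocks above the axis and $\theta$ to the reflected blocks below it, though that lies outside Proposition~\ref{prop:bijections} itself.
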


\begin{proof}
A simple bijection $\mu$ between (i) and (ii) can be described as follows. 
Given a Motzkin path $M$ of length $n+1$ with no peaks, construct $\mu(M)$ by replacing
every subpath of the form $DH^{i}U$ with $DH^{i+1}U$, and every subpath of the form $UH^{i+1}D$ with $UH^{i}D$, for $i\ge0$; or, if $M$ contains only $H$ steps, by simply removing an $H$. The resulting Motzkin path $\mu(M)$ has length $n$ and it contains no valleys. 

The inverse map $\mu^{-1}$ has the following description. Given a Motzkin path of length $n$ with no valleys, 
replace every subpath of the form $DH^{i+1}U$ with $DH^iU$, and every subpath of the form $UH^iD$ with $UH^{i+1}D$, for $i\ge0$; or, if $M$ contains only $H$ steps, simply add an extra $H$. 

Next we describe a bijection $\theta$ between (ii) and (iii). Let $V$ be a Motzkin path of length $n$ with no valleys. For every $D$ step of $V$, consider two possibilities: if it is followed by an $H$, simply remove this $H$ step; otherwise, find the {\em matching} $U$ step (this is the rightmost $U$ step that is at the same height and to the left of the $D$ step), change this $U$ to an $H_1$, and remove the $D$ step.
Finally, change any remaining $H$ steps to $H_2$ steps. Let $\theta(V)$ be the resulting path. Some examples are given in Figure~\ref{fig:Theta}. Since every $D$ in $V$ causes either the removal of an $H$ step or the substitution of a pair of matching $U$ and $D$ steps with an $H_1$ step, we see that $\theta(V)$ is an uneven bicolored Motzkin path of size $n$. 

The inverse map $\theta^{-1}$ has the following description. Given an uneven bicolored Motzkin path of length~$n$, first turn the $H_2$ steps into $H$ steps, and insert an $H$ after each $D$ step. Then, for each $H_1$ step, find the first $D$ to its right where the path dips below the height of this $H_1$ step, insert a $D$ immediately before this $D$ (or at the end of the path if no such $D$ was found), and turn this $H_1$ step into a $U$.
\end{proof}

\begin{figure}[htb]
\centering
   \begin{tikzpicture}[scale=0.48] 
     \draw[dotted](-1,0)--(32,0);
     \draw[dotted](0,-2)--(0,2);
     \draw[thick](0,0) circle(1.2pt) \U\H\U\H\H\D\U\H\D\D\U\H\D\H\H\D\U\H\D\D\H\U\U\H\D\U\U\U\H\D\D;

     \draw[very thick,purple] (1,1) circle(1.2pt) \H\U\H\H\D\U\H\D;
     \draw[purple] (1,.5) rectangle (9,2.5);
     \draw[purple] (1.5,2) node {$M_1$};
	 \draw[purple,->] (5,.5)-- node[right]{$\mu$} (6,-1);     
     \draw[very thick,purple] (3,-2.5) circle(1.2pt) \H\U\H\D\H\U\D;
     \draw[purple] (3,-3) rectangle (10,-1);     
	 \draw[purple,->] (7,-3)-- node[right]{$\theta$} (8,-4.5);     

     \draw[very thick,orange] (11,1) circle(1.2pt) \H;
     \draw[orange] (11,.5) rectangle (12,1.5);
     \draw[orange] (11.5,2) node {$M_2$};
   	 \draw[orange,->] (11.5,.5)-- node[right]{$\mu$} (12,-2.2);  
     \draw[very thick,orange] (12,-2.5) circle(1.2pt);
     \draw[orange] (11.7,-2.8) rectangle (12.3,-2.2);   
   	 \draw[orange,->] (12.1,-2.8)-- node[right]{$\theta$} (12.9,-5.7);   
   	 
	\draw[very thick, olive] (13,0) circle(1.2pt) \H\H\D\U\H\D\D\H\U\U\H\D\U;    	 
       \draw[olive] (13,-2.5) rectangle (26,.5); 	 
     \draw[olive] (20.5,-1) node {$\overline{V_2}$};
   	 \draw[olive,->] (19.5,-2.5)-- node[right]{$\overline{\theta}$} (19,-6.5);

	\draw[very thick, cyan] (27,1) circle(1.2pt) \U\H\D;    	 
       \draw[cyan] (27,.5) rectangle (30,2.5); 	 
     \draw[cyan] (28.5,1) node {$M_3$};
     	 \draw[cyan,->] (28.5,.5)-- node[right]{$\mu$} (28.2,-1);     
     \draw[very thick,cyan] (27,-2.5) circle(1.2pt) \U\D;
     \draw[cyan] (27,-3) rectangle (29,-1);     
	 \draw[cyan,->] (27.5,-3)-- node[right]{$\theta$} (24.7,-5.5);  
	 
	 \draw[->] (.5,-1) to [out=270,in=160] node[left]{$\Theta$} (4   ,-6);

\begin{scope}[shift={(5,-7)}]    
     \draw[dotted](-1,0)--(22,0);
     \draw[dotted](0,-2)--(0,2); 
     \draw[thick](0,0) circle(1.2pt) \U coordinate(last); \drawHtwo[purple]
     \draw[very thick,purple](last) circle (1.2pt) \U coordinate(last); \drawHtwo[purple]
     \draw[very thick,purple](last) circle (1.2pt) \D\Hone coordinate(last);
     \draw[purple] (1,.5) rectangle (6,2.5); 
     \draw[thick](last) \D\U\D coordinate(last); 
     \draw[very thick,orange] (8,1) circle(1.2pt);
     \draw[orange] (7.7,.7) rectangle (8.3,1.3);   
     \drawHtwo[olive]\drawHtwo[olive]
     \draw[very thick,olive](last) circle (1.2pt) \D\U\D\Hone coordinate(last); 
     \drawHtwo[olive]
     \draw[very thick,olive](last) circle (1.2pt) \U\Hone coordinate(last);
     \draw[olive] (9,-1.5) rectangle (18,.5);   
          \draw[thick](last) circle(1.2pt) \U coordinate(last);
          \draw[very thick,cyan](last)   \Hone coordinate(last);
     \draw[cyan] (19,.5) rectangle (20,1.5);   
     \draw[thick](last) circle(1.2pt) \D;
\end{scope}
    \end{tikzpicture}
    
   \caption{The bijection $\Theta$ from Theorem~\ref{thm:Theta}, constructed by applying the bijections $\mu$ and $\theta$ from Proposition~\ref{prop:bijections} to the appropriate subpaths. As before, $H_1$ and $H_2$ steps are pictured with double lines and wavy lines, respectively, and the overline indicates a reflection along the $x$-axis.}
   \label{fig:Theta}
\end{figure}
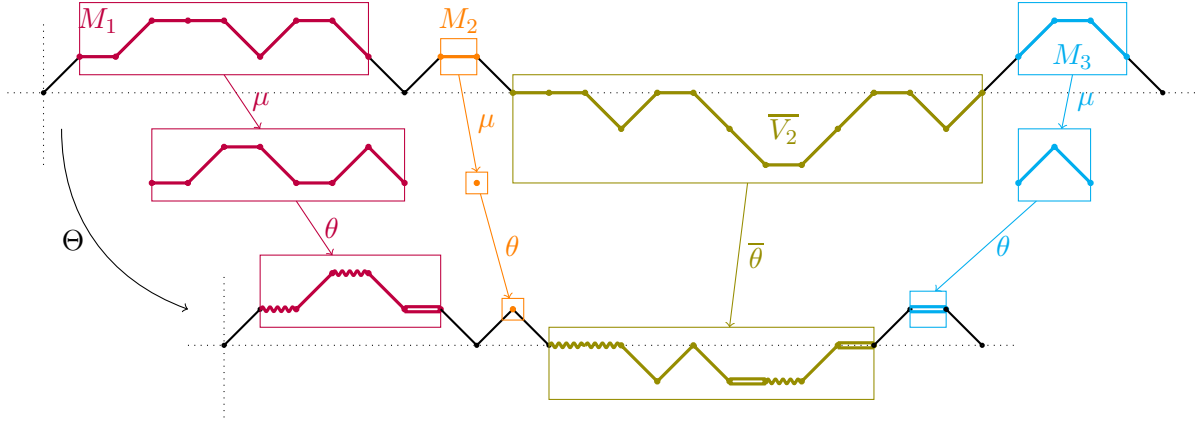

Combining the bijections from Proposition~\ref{prop:bijections}, we can now prove the following result.

\begin{theorem}\label{thm:Theta}
There is an explicit bijection $\Theta$ between the set of grand Motzkin paths of length $n$ with no peaks $UD$, and the set of uneven bicolored grand Motzkin paths of size $n$.
\end{theorem}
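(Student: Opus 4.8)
The plan is to build $\Theta$ by cutting a peakless grand Motzkin path into pieces that lie above or below the $x$-axis and transforming each piece with the bijections $\mu$ and $\theta$ from Proposition~\ref{prop:bijections}. Let $P$ be a grand Motzkin path of length $n$ with no peaks. Cutting $P$ at its successive returns to the $x$-axis produces factors that are single on-axis horizontal steps, ascending excursions $U\cdots D$, or descending excursions $D\cdots U$. I would group the maximal runs of consecutive factors that stay weakly below the axis (runs of on-axis horizontal steps together with descending excursions) into \emph{descending blocks}, so that each descending block, reflected across the axis, becomes a nonempty Motzkin path $V$; the remaining factors are the ascending excursions $UMD$, the \emph{ascending blocks}. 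This decomposition is unique. The role of the no-peak hypothesis is crucial here: two descending excursions can never be adjacent, since their junction would be an occurrence of $UD$; consequently each $V$ is \emph{valleyless} (a valley $DU$ in $V$ would arise either from a peak inside a descending excursion of $P$ or from two adjacent descending excursions of $P$, both impossible), and each $M$ is a nonempty \emph{peakless} Motzkin path ($M$ is peakless because $P$ is, and $M\neq\emptyset$ since otherwise $UD$ would be a peak).

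I would then define $\Theta(P)$ as the concatenation, block by block, of $U\,\theta(\mu(M))\,D$ in place of each ascending block $UMD$ --- that is, the uneven bicolored Motzkin path $\theta(\mu(M))$ raised one level and flanked by a $U$ and a $D$ --- and $\overline{\theta(V)}$ in place of each descending block $\overline{V}$ --- the uneven bicolored Motzkin path $\theta(V)$ reflected across the axis. Each block image starts and ends on the axis, so $\Theta(P)$ is a bicolored grand Motzkin path. For the weight bookkeeping, recall from Proposition~\ref{prop:bijections} that $\mu$ decreases length by one and that $\theta$ sends a valleyless Motzkin path of length $\ell$ to an uneven bicolored Motzkin path of size $\ell$. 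Thus $\theta(\mu(M))$ has size $|M|-1$, and the image $U\,\theta(\mu(M))\,D$ of an ascending block has size $(|M|-1)+1+2=|M|+2$, exactly the length of $UMD$. A descending block $\overline{V}$ has length $|V|$, and $\theta(V)$ has size $|V|$; reflecting across the axis swaps $U$'s with $D$'s but, a Motzkin path having as many $U$'s as $D$'s, preserves the uneven size, so $\overline{\theta(V)}$ also has size $|V|$. Summing over blocks, $\Theta(P)$ has size $n$.

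For injectivity and surjectivity I would exhibit the inverse directly: an uneven bicolored grand Motzkin path $R$ of size $n$ decomposes uniquely into ascending excursions $UND$ (with $N$ an uneven bicolored Motzkin path) and maximal weakly-below factors $\overline{N'}$ (with $N'$ an uneven bicolored Motzkin path); replacing each $UND$ by $U\,\mu^{-1}(\theta^{-1}(N))\,D$ and each $\overline{N'}$ by $\overline{\theta^{-1}(N')}$ yields a grand Motzkin path, which one checks is peakless, and this map is a two-sided inverse of $\Theta$ because $\mu$ and $\theta$ are bijections and the two decompositions are canonical.

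The step I expect to be the main obstacle is pinning down the decomposition so that it is genuinely canonical and compatible with the maps: on-axis horizontal steps are simultaneously weakly above and weakly below the axis, and must all be absorbed into descending blocks (an isolated on-axis $H$ becoming the trivial descending block $\overline{H}$), and one must check carefully that the absence of peaks in $P$ is precisely the condition making $\mu$ applicable to each $M$ and $\theta$ applicable to each $V$, and that no peak is created when the inverse map reassembles $P$. Once this framework is in place the size accounting is automatic, the length decrease of one under $\mu$ being exactly offset by the weight $z\cdot z^2$ of the two steps flanking an ascending excursion; Figure~\ref{fig:Theta} illustrates the whole construction on an example.
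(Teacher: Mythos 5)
Your proposal is correct and follows essentially the same route as the paper: the same unique decomposition of a peakless grand Motzkin path into strictly-above blocks $UM_iD$ (with $M_i$ nonempty and peakless) and weakly-below stretches $\overline{V_i}$ (with $V_i$ valleyless), followed by applying $\theta\circ\mu$ to the $M_i$ and $\theta$ to the $V_i$, with identical size bookkeeping. Your extra details (why no valley can arise at junctions, why reflection preserves the uneven size, and the explicit inverse) are correct refinements of what the paper states more briefly.
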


\begin{proof}
By splitting the blocks that stay strictly above the $x$-axis, every grand Motzkin path $M$ of length~$n$ with no peaks can be decomposed uniquely as 
$$
M=\overline{V_0}\;UM_1D\;\overline{V_1}\;UM_2D\;\overline{V_2}\;\dots\;UM_rD\;\overline{V_r}
$$ 
for some $r\ge0$, where each $M_i$ is a non-empty Motzkin path with no peaks, and each $\overline{V_i}$ is the reflection along the $x$-axis of a (possibly empty) Motzkin path $V_i$ with no valleys.
Applying now the bijections $\mu$ and $\theta$ from Proposition~\ref{prop:bijections}, $\mu(M_i)$ is a Motzkin path of length $|M_i|-1$ with no valleys for all $i$, 
and $\theta(\mu(M_i))$ and $\theta(V_i)$ are uneven bicolored Motzkin paths of size $|M_i|-1$ and $|V_i|$, respectively. 

We define the image of $M$ to be the uneven bicolored grand Motzkin path
$$\Theta(M)=\overline{\theta(V_0)}\;U\theta(\mu(M_1))D\;\overline{\theta(V_1)}\;U\theta(\mu(M_2))D\;\overline{\theta(V_2)}\;\dots\;U\theta(\mu(M_r))D\;\overline{\theta(V_r)},$$
where $\overline{\theta(V_i)}$ denotes the reflection of $\theta(V_i)$ along the $x$-axis; see Figure~\ref{fig:Theta} for an example. Since the size of each $\overline{\theta(V_i)}$ is $|V_i|$, and the size of each block $U\theta(\mu(M_i))D$ is
$1+(|M_i|-1)+2=|UM_iD|$, the total size of $\Theta(M)$ is $|M|=n$. 
Since $\mu$ and $\theta$ are bijections by Proposition~\ref{prop:bijections}, it is clear that $\Theta$ is a bijection as well.
\end{proof}

\section{Symmetry of Dyck paths}\label{sec:Dyck}

Our goal in this section is to study the generating function
$$D(s,z)=\sum_{n\ge0} \sum_{P\in\DP_{n}} s^{\ds(P)} z^n$$
for Dyck paths with respect to their degree of symmetry. 
In contrast to the simplicity of the generating function in Theorem~\ref{thm:sym_G} for grand Dyck paths, the generating function $D(s,z)$ is unwieldy. To investigate the statistic $\ds$ on Dyck paths, we will first rephrase the problem in terms of walks in the plane. Then we will apply some transformations on the walks that will allow us to obtain a functional equation for a refinement of $D(s,z)$.

Let $\WQ^1_n$ denote the set of walks constrained to the first quadrant $\{(x,y):x,y\ge0\}$, starting at the origin, ending on the diagonal $y=x$, and having $n$ steps from the set $\{\NE,\NW,\SE,\SW\}$, where we use the notation $\NE=(1,1)$, $\NW=(-1,1)$, $\SE=(1,-1)$, $\SW=(-1,-1)$.

We start by describing a bijection $\bijw:\DP_n\to\WQ^1_n$, which, in a similar form, has been used in~\cite{Eliwalks,G-B86,BMM}. 
Given $P\in\DP_n$, first define two paths as in Section~\ref{sec:GD}: $P_L$ is the left half of $P$, and $P'_R$ is the path obtained by reflecting the right half of $P$ along the vertical line $x=n$. Both $P_L$ and $P'_R$ are paths  with steps $U$ and $D$ from $(0,0)$ to the same point on the line $x=n$, not going below the $x$-axis.
Denote the directions of the $i$th steps of $P_L$ and $P'_R$ by $\ell_i,r_i\in\{U,D\}$, respectively.
Let $\bijw(P)\in\WQ^1_n$ be the walk whose $i$th step is equal to 
$$\begin{cases} 
\NE & \text{ if } \ell_i=r_i=U,\\
\NW & \text{ if } \ell_i=U \text{ and }r_i=D,\\
\SE & \text{ if } \ell_i=D \text{ and }r_i=U,\\
\SW & \text{ if } \ell_i=r_i=D.\\
\end{cases}$$

Under this bijection, symmetric steps of $P$, which correspond to common steps of $P_L$ and $P'_R$, become steps of $\bijw(P)$ lying entirely on the diagonal $y=x$.
Finding an equation for the generating function of walks with a variable marking the number of such steps would be troublesome, since in order to contain a term corresponding to walks ending on the diagonal $y=x$, it would require taking diagonals of generating functions (as defined in Equation~\eqref{eq:diag-def}).

To circumvent this problem, we modify the walks so that the steps that we need to keep track of lie on the boundary of the region.
Folding walks in $\WQ^1_n$ along the diagonal $y=x$ (that is, reflecting the steps above the diagonal onto steps below it), we obtain walks in the first octant $\{(x,y):x\ge y\ge0\}$. In order not to lose information while folding, we have to allow the resulting walks in the octant to use two colors for steps $\SE$ leaving the diagonal. These colors keep track of whether the portion of the walk between the colored step and the next return to the diagonal was above or below the diagonal on the original quadrant walk. We obtain a bijection between $\WQ^1_n$ and the set $\WQ^2_n$ of walks in the first octant starting at the origin, ending on the diagonal $y=x$, having $n$ steps from the set $\{\NE,\NW,\SE,\SW\}$, and where steps $\SE$ leaving the diagonal $y=x$ have two possible colors.

Next we apply the linear transformation $(x,y)\mapsto(y,\frac{x-y}{2})$ from the first octant to the first quadrant. This transformation gives a bijection between $\WQ^2_n$ and the set $\WQ^3_n$ of walks in the first quadrant starting at the origin, ending on the $x$-axis, having $n$ steps from the set $\{E,W,\NW,\SE\}$ (denoting $E=(1,0)$ and $W=(-1,0)$), and where steps $\NW$ leaving the $x$-axis have two possible colors. Under this bijection, steps of walks in $\WQ^2_n$ lying on the diagonal $y=x$ become steps of walks in $\WQ^3_n$ lying on the $x$-axis. Table~\ref{tab:WQ} summarizes the above sequence of bijections from $\DP_n$ to $\WQ^3_n$. 
It follows that $D(s,z)$ is the generating function for walks in $\WQ^3_n$ where $z$ marks the length (total number of steps) and $s$ marks the number of steps on the $x$-axis.

\begin{table}[htb]
\centering
\hspace{24mm}
$\DP_n \hspace{6mm} \stackrel{\bijw}{\longrightarrow} \hspace{6mm} \WQ^1_n \hspace{4mm} \stackrel{\text{fold along y=x}}{\longrightarrow} \hspace{4mm} \WQ^2_n \hspace{3.5mm}\stackrel{(x,y)\mapsto(y,\frac{x-y}{2})}\longrightarrow \hspace{3.5mm} \WQ^3_n$
\begin{tabular}{r||c|c|c|c}
\hline\hline
walks in & 
\begin{tikzpicture}[scale=1]
\draw[fill=yellow] (0.7071,0.7071)--(0,0)--(1,0);
\draw (1.5,.5) node {first};
\draw (1.5,.2) node {octant};
\end{tikzpicture}
& 
\begin{tikzpicture}[scale=.9]
\draw[fill=yellow] (0,1)--(0,0)--(1,0);
\draw (1.5,.6) node {first};
\draw (1.5,.3) node {quadrant};
\end{tikzpicture}
 & \begin{tikzpicture}[scale=1]
\draw[fill=yellow] (0.7071,0.7071)--(0,0)--(1,0);
\draw (1.5,.5) node {first};
\draw (1.5,.2) node {octant};
\end{tikzpicture}
& 
\begin{tikzpicture}[scale=.9]
\draw[fill=yellow] (0,1)--(0,0)--(1,0);
\draw (1.5,.6) node {first};
\draw (1.5,.3) node {quadrant};
\end{tikzpicture} \\ \hline
allowed steps & 
\begin{tikzpicture}[scale=.3]
\draw[thick,->] (0,0)--(1,1) node[right] {$U=\NE$};
\draw[thick,->] (0,0)--(1,-1) node[right] {$D=\SE$};
\end{tikzpicture}
& 
\begin{tikzpicture}[scale=.3]
\draw[thick,->] (0,0)--(1,1) node[right] {$\NE$};
\draw[thick,->] (0,0)--(1,-1) node[right] {$\SE$};
\draw[thick,->] (0,0)--(-1,1) node[left] {$\NW$};
\draw[thick,->] (0,0)--(-1,-1) node[left] {$\SW$};
\end{tikzpicture}
& 
\begin{tikzpicture}[scale=.3]
\draw[thick,->] (0,0)--(1,1) node[right] {$\NE$};
\draw[thick,->] (0,0)--(1,-1) node[right] {$\SE$};
\draw[thick,->] (0,0)--(-1,1) node[left] {$\NW$};
\draw[thick,->] (0,0)--(-1,-1) node[left] {$\SW$};
\end{tikzpicture}
& 
\begin{tikzpicture}[scale=.3]
\draw[thick,->] (0,0)--(1,0) node[right] {$E$};
\draw[thick,->] (0,0)--(1,-1) node[right] {$\SE$};
\draw[thick,->] (0,0)--(-1,1) node[left] {$\NW$};
\draw[thick,->] (0,0)--(-1,0) node[left] {$W$};
\end{tikzpicture}\\ \hline
length & $2n$ & $n$ & $n$ & $n$ \\ \hline
ending on & $x$-axis & diagonal & diagonal & $x$-axis \\ \hline
2 colors for & - & - & $\SE$ leaving diagonal & $\NW$ leaving $x$-axis \\ \hline
$\ds$ counts & symmetric steps & steps on diagonal & steps on diagonal & steps on $x$-axis
\end{tabular}
\caption{A summary of the bijections between Dyck paths and walks used to derive Theorem~\ref{thm:symDyck}.}
\label{tab:WQ}
\end{table}

To enumerate walks in $\WQ^3_n$, we consider a more general class of walks where any endpoint is allowed.
Let $R(x,y,s,z)$ be the generating function where the coefficient of $x^i y^j s^k z^n$ is the number of walks in the first quadrant with $n$ steps from the set $\{E,W,\NW,\SE\}$, starting at the origin, ending at $(i,j)$, having $k$ steps entirely on the $x$-axis, and where steps $\NW$ leaving the $x$-axis have two possible colors. The generating function for Dyck paths with respect to the statistic $\ds$ is the specialization $D(s,z)=R(1,0,s,z)$.
By considering the different possibilities for the last step of the walk, we obtain the following functional equation for $R(x,y):=R(x,y,s,z)$.
\begin{align*}
R(x,y)=& \ 1+z\left(x+\frac{1}{x}+\frac{x}{y}+\frac{y}{x}\right)R(x,y) &\\
& - z \left(\frac{1}{x}+\frac{y}{x}\right)R(0,y)  & \text{(no steps $W$ or $\NW$ when on $y$-axis)}\\
& - z\, \frac{x}{y} R(x,0)  & \text{(no steps SE when on $x$-axis)}\\
& + z\, \frac{y}{x} \left(R(x,0)-R(0,0)\right)  & \text{(second color for steps $\NW$ when on positive $x$-axis)}\\
& + z\, (s-1) \left(x+\frac{1}{x}\right)R(x,0)  & \text{(steps $E$ and $W$ on $x$-axis have weight $s$)}\\
& - z\, (s-1) \frac{1}{x} R(0,0).  & \text{(but no step $W$ when at the origin)}
\end{align*}

Collecting terms in this equation, we can summarize our result as follows.

\begin{theorem}\label{thm:symDyck}
The generating function for Dyck paths with respect to their degree of symmetry is $D(s,z)=R(1,0,s,z)$, where $R(x,y):=R(x,y,s,z)$ satisfies the functional equation
\begin{multline*} 
\left(xy-z(y+x^2)(1+y)\right)R(x,y)\\
=xy-zy(1+y)R(0,y)+z\left(y^2-x^2+(s-1)y(x^2+1)\right)R(x,0)-zy(y+s-1)R(0,0).
\end{multline*}
\end{theorem}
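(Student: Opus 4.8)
The chain of bijections $\DP_n\to\WQ^1_n\to\WQ^2_n\to\WQ^3_n$ recorded in Table~\ref{tab:WQ} has already reduced the statement about Dyck paths to a statement about walks in $\WQ^3$, with $z$ marking length and $s$ marking steps lying on the $x$-axis. So I would take $R(x,y):=R(x,y,s,z)$ as defined and derive its functional equation by removing the last step of a nonempty walk. The point of generalizing to arbitrary endpoints is precisely that this last-step decomposition then closes up: appending $E$, $W$, $\NW=(-1,1)$ or $\SE=(1,-1)$ multiplies the endpoint monomial $x^iy^j$ by $x$, $1/x$, $y/x$ or $x/y$, and the only information needed to decide whether the new walk is legal and how its new step should be weighted is whether the previous walk ended on the $y$-axis, on the $x$-axis, or at the origin --- data visible in $R(0,y)$, $R(x,0)$ and $R(0,0)$.

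Concretely, the plan is to first write the ``unconstrained'' relation $R(x,y)=1+z\bigl(x+\tfrac1x+\tfrac xy+\tfrac yx\bigr)R(x,y)$, obtained by pretending every step can always be appended with weight $1$, and then install five corrections: subtract $z\bigl(\tfrac1x+\tfrac yx\bigr)R(0,y)$ to kill $W$ and $\NW$ steps that would leave the quadrant on the left; subtract $z\tfrac xy R(x,0)$ to kill $\SE$ steps that would leave it on the bottom; add $z\tfrac yx\bigl(R(x,0)-R(0,0)\bigr)$ for the second colour available to a $\NW$ step that leaves the \emph{positive} $x$-axis; add $z(s-1)\bigl(x+\tfrac1x\bigr)R(x,0)$ because an $E$ or $W$ step taken from the $x$-axis stays on it and must be reweighted from $1$ to $s$; and subtract $z(s-1)\tfrac1x R(0,0)$ to undo that reweighting for the $W$ step at the origin, which does not exist. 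This reproduces the displayed equation preceding the theorem. Clearing denominators by multiplying through by $xy$, moving the $R(x,y)$ terms to the left, collecting the coefficients of $R(x,y)$, $R(0,y)$, $R(x,0)$, $R(0,0)$, and using the identities $x^2y+x^2+y+y^2=(x^2+y)(1+y)$, $\,y+y^2=y(1+y)$ and $\,x^2y+y=y(x^2+1)$ then yields exactly the stated form. The specialization $D(s,z)=R(1,0,s,z)$ is immediate from the fact that, for walks in $\WQ^3$, $\ds$ counts steps on the $x$-axis.

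The algebra at the end is mechanical. The delicate part --- the one I would check most carefully --- is the bookkeeping at the origin inside the third and fifth corrections: one must verify that the bonus colour is attached to exactly those $\NW$ steps starting on the positive $x$-axis (hence $R(x,0)-R(0,0)$, not $R(x,0)$), that no $\NW$ step emanating from the origin is ever counted (consistently with its removal inside the $R(0,y)$ correction), and that the $s$-reweighting of $W$ steps on the $x$-axis is cancelled at the origin, where no such step exists. I expect this corner-case accounting on the three special loci --- the $y$-axis, the $x$-axis, and their intersection --- to be the main obstacle; once it is pinned down, everything else is routine.
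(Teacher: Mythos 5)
Your proposal is correct and follows essentially the same route as the paper: it relies on the bijection chain $\DP_n\to\WQ^1_n\to\WQ^2_n\to\WQ^3_n$ of Table~\ref{tab:WQ} and then derives the functional equation by a last-step decomposition with exactly the same five correction terms (the $R(0,y)$, $R(x,0)$, and $R(0,0)$ adjustments for the $y$-axis, $x$-axis, bonus colour on the positive $x$-axis, and the $s$-reweighting with its origin correction), after which clearing denominators by $xy$ and using $x^2y+x^2+y+y^2=(x^2+y)(1+y)$ gives the stated equation. The corner cases you flag at the origin are precisely the ones the paper's equation handles, and your accounting of them matches the paper's.
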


The first few coefficients of $D(s,z)$ are given in Table~\ref{tab:Dyck}.
Even though we have been unable to solve this functional equation, it suggests that the generating function $D(s,z)=R(1,0,s,z)$ is D-finite. Computations by Alin Bostan (personal communication, July 2019) using Theorem~\ref{thm:symDyck}  have led to the following conjecture.

\begin{conjecture}
The generating function $D(s,z)$ is D-finite in $z$ but not algebraic. Specifically, it satisfies
a fifth order linear differential equation with polynomial coefficients with maximum degree 27 in $z$.
\end{conjecture}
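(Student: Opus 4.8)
\begin{remark}
We do not have a proof of this conjecture, but we sketch the approach that we expect to work and indicate where the main obstacle lies; both the D-finiteness and the non-algebraicity remain to be established rigorously. The natural tool is the obstinate kernel method for walks in the quarter plane, applied to the functional equation in Theorem~\ref{thm:symDyck}. Its starting point is the analysis of the kernel $K(x,y)=xy-z(y+x^2)(1+y)$. Viewed as a quadratic polynomial in $y$, it has two roots whose product is $x^2$, so the birational map $\iota_y\colon(x,y)\mapsto(x,x^2/y)$ satisfies $K(x,x^2/y)=(x/y)^2K(x,y)$; viewed as a quadratic in $x$, it has two roots whose product is $y$, so $\iota_x\colon(x,y)\mapsto(y/x,y)$ satisfies $K(y/x,y)=(y/x^2)K(x,y)$. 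A short computation gives $(\iota_x\iota_y)^2\colon(x,y)\mapsto(1/x,1/y)$, hence $(\iota_x\iota_y)^4=\mathrm{id}$, so the group $G=\langle\iota_x,\iota_y\rangle$ is dihedral of order~$8$; in particular it is finite. Note that the variable $s$ and the two-coloring of the $\NW$ steps affect only the inhomogeneous part of the equation, not the kernel, so $G$ is the same group that governs the uncolored, unweighted model (the one underlying OEIS sequences A005817 and A005558).

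Because $G$ is finite, the next step is the orbit sum: apply each of the eight elements of $G$ to the functional equation, multiply by the appropriate sign and monomial factor, and add the results. One expects the boundary series $R(0,y)$, $R(x,0)$, and $R(0,0)$ to cancel in pairs, leaving $R(x,y)$, and hence $D(s,z)=R(1,0,s,z)$, expressed as the positive part --- that is, as a diagonal --- of an explicit algebraic function of $x$, $y$, $s$, and~$z$. By Lipshitz's theorem, a diagonal of a D-finite (in particular of an algebraic) series is D-finite, which would establish that $D(s,z)$ is D-finite in~$z$. The more precise assertion about a fifth-order operator of degree~$27$ would then follow by the standard guess-and-prove route: compute enough coefficients of $D(s,z)$ from Theorem~\ref{thm:symDyck}, recover a candidate operator by Hermite--Pad\'e approximation, and certify it by creative telescoping on the diagonal representation, or by combining a truncated verification with effective a priori bounds on the order and the degree.

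The genuinely hard part is to prove that $D(s,z)$ is \emph{not} algebraic. This cannot be reduced to a numerical specialization of $s$, since $D(1,z)=\sum_{n\ge0}C_nz^n$ \emph{is} algebraic; the transcendence must come from the generic, $s$-dependent series, which is why the behaviour here is so different from that in Theorem~\ref{thm:sym_G}. The natural route is singularity analysis of $D(s,z)$ in~$z$: locate its dominant singularity, analyze the local expansion there, and exhibit a feature incompatible with algebraicity --- a logarithm in the coefficient asymptotics, a critical exponent not realizable by a Puiseux expansion, or infinitely many singularities on the circle of convergence. Alternatively, one could try to show that the order-$5$ operator produced above is globally irreducible and has no algebraic (indeed no Liouvillian) solution, for instance through a $p$-curvature or differential Galois computation. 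Turning either of these into an unconditional proof, rather than the numerical verification that led to the conjecture, is the obstacle that keeps the statement open.
\end{remark}
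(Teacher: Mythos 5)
There is no proof in the paper to compare your attempt against: the statement is stated as a conjecture and left open. The paper records that the functional equation of Theorem~\ref{thm:symDyck} could not be solved, attributes the conjectured fifth-order operator of degree 27 to computations of Alin Bostan (i.e., guessing a differential equation from series coefficients produced by the functional equation), and lists solving this equation as an open problem in Section~\ref{sec:open}. Your remark correctly refrains from claiming a proof, so it is faithful to the status of the statement, and its overall outline (kernel analysis, hoped-for D-finiteness via a diagonal representation and Lipshitz's theorem, guess-and-prove for the explicit operator, and a separate transcendence argument) is consistent with what the paper itself suggests. Your kernel computation is also correct: the kernel is that of the Gouyou-Beauchamps step set, the two involutions are as you describe, and the group is dihedral of order 8.

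The one substantive caution concerns your sentence ``one expects the boundary series to cancel in pairs.'' That is exactly what fails here, and it is the reason the paper could not adapt the kernel method that solves the unweighted Gouyou-Beauchamps model. The terms $z(s-1)\left(x+\tfrac{1}{x}\right)R(x,0)$, $z\,\tfrac{y}{x}\bigl(R(x,0)-R(0,0)\bigr)$, and $-z(s-1)\tfrac{1}{x}R(0,0)$ are boundary-interaction terms whose coefficients are not permuted among themselves by the group elements, so the standard alternating orbit sum does not eliminate the unknown sections $R(x,0)$, $R(0,y)$, $R(0,0)$; note also that even at $s=1$ the model is not plain Gouyou-Beauchamps, because of the two-coloring of $\NW$ steps leaving the $x$-axis. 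This is precisely why the paper points to the interacting-boundary techniques of Beaton--Owczarek--Rechnitzer and Tabbara--Owczarek--Rechnitzer as the plausible route, rather than the classical orbit-sum argument. Finally, as you acknowledge, non-algebraicity would require a genuinely separate argument (the specialization $D(1,z)=\Cat(z)$ is algebraic), and neither the paper nor your sketch supplies one; so the statement remains a conjecture on both sides.
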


Removing the choice of two colors for steps $\NW$ leaving the $x$-axis, and setting the weight $s=1$ for steps on the $x$-axis, the resulting walks in the first quadrant with steps in $\{E,W,\NW,\SE\}$ were studied by Gouyou-Beauchamps~\cite{G-B86}, and they are in bijection with walks in the first octant with steps in $\{N,S,E,W\}$. Those ending on the $x$-axis correspond to Dyck paths whose left half $P_L$ is weakly above the folded right half $P'_R$, counted by sequence A005817 in~\cite{OEIS}, and those with no restriction on the endpoint are counted by sequence A005558 in~\cite{OEIS}. 
While the functional equation for Gouyou-Beauchamps walks can be solved using the kernel method (see \cite[Sec.\ 5.3]{BMM}), we have not been able to adapt this method to solve the equation in Theorem~\ref{thm:symDyck}. Some possible approaches and related work are discussed in Section~\ref{sec:open}.

Because of the above bijection between $\DP_n$ and $\WQ_n^3$, several specializations of $R(x,y,1,z)$ give rise to known sequences. For example, 
the coefficient of $z^{2n}$ in $R(0, 0, 1, z)$ is $C_n^2$, which is sequence A001246 in \cite{OEIS}, and more generally, $R(x,0,1,z)$ is the generating function for Dyck paths where $x$ marks the height of their midpoint \cite[A213600]{OEIS}; this statistic will be discussed in Section~\ref{sec:other}. One can extend the bijection by allowing arbitrary endpoints for the walks, which corresponds to allowing the halves $P_R$ and $P'_L$ to end at different heights (equivalently, we can think of $P$ as a Dyck path that is allowed to be discontinuous at its midpoint). It follows that the coefficient of $z^{n}$ in $R(1, 1, 1, z)$ is $\binom{n}{\lfloor n/2 \rfloor}^2$, the number of pairs $(P_R,P'_L)$ with endpoints at arbitrary heights; this is sequence A018224 in~\cite{OEIS}. Similarly, changing the restriction on $\WQ_n^3$ so that walks end on the $y$-axis translates into changing the restriction on $\WQ_n^1$ so that walks end on either axis, which in turn translates into one of the halves $P_R$ and $P'_L$ ending at height $0$, while there are no restrictions on the other. It follows that the coefficient of $z^{2n}$ in $R(0, 1, 1, z)$ is $2C_n\binom{2n}{n}-C_n^2=\frac{(2n)!(2n+1)!}{n!^2(n+1)!^2}$, which is sequence A000891 in~\cite{OEIS}.

\begin{table}[h]
\centering
\begin{tabular}{c@{\qquad}c}
$\card{\{P \in \DP_n: \ds(P) = k\}}$ & $\card{\{P \in \DP_n: \sv(P) = k\}}$ \smallskip \\
\begin{tabular}{c||r|r|r|r|r|r|r|r|} 
 $n \setminus k$ & 1 & 2 & 3 & 4 & 5 & 6 & 7\\ 
\hline\hline
1& 1& & & & & &   \\
\hline
2&0 &2 & & & & & \\
\hline
3& 2 &0 &3 & & & & \\
\hline
4&2 &6 &0 & 6 &  & & \\
\hline
5& 8 & 8 & 16 & 0 & 10 &  & \\
\hline
6& 16 & 32 & 24 & 40 & 0 & 20 &  \\
\hline
7& 52 & 84 & 108 & 60 & 90 & 0 & 35 \\
\hline
\end{tabular}
&
\begin{tabular}{c||r|r|r|r|r|r|r|r|} 
 $n \setminus k$ & 1 & 2 & 3 & 4 & 5 & 6 & 7\\ 
\hline\hline
1& 1& & & & & &   \\
\hline
2&0 &2 & & & & & \\
\hline
3&0 &2 &3 &  & & & \\
\hline
4&0 &2 &6 & 6 &  & & \\
\hline
5& 0 & 4 & 12 & 16 & 10 &  & \\
\hline
6& 0 & 8 & 24 & 40 & 40 & 20 &  \\
\hline
7& 0 & 20 & 60 & 104 & 120 & 90 & 35 \\
\hline
\end{tabular}
\end{tabular}
\caption{The number of Dyck paths of length $n\le 7$ with a given degree of symmetry (left, see Theorem~\ref{thm:symDyck}) and with a given number of symmetric vertices (right, see Theorem~\ref{thm:svDyck}).}
\label{tab:Dyck}
\end{table}

In analogy with Theorem~\ref{thm:sv_G} for grand Dyck paths, next we consider an alternative measure of the symmetry of a Dyck path, given by its number of {\em symmetric vertices}. For example, the Dyck path at the bottom of Figure~\ref{fig:symD} has 5 symmetric vertices. 

Denote the generating function for Dyck paths with respect to the number of symmetric vertices by
$$\vD(v,z)=\sum_{n\ge0}\sum_{P\in\DP_n}v^{\sv(P)}z^n.$$ 
Through the sequence of bijections in Table~\ref{tab:WQ}, symmetric vertices of Dyck paths become vertices on the $x$-axis of walks in $\WQ^3_n$, not counting the endpoint of the walk. The argument that we used to obtain an equation for $R(x,y,s,z)$ can be modified to obtain the following equation for the generating function $\vR(x,y):=\vR(x,y,v,z)$, where the coefficient of $x^i y^j v^k z^n$ is the number of walks in the first quadrant with $n$ steps from the set $\{E,W,\NW,\SE\}$, starting at the origin, ending at $(i,j)$, having $k$ steps starting on the $x$-axis, and where steps $\NW$ leaving the $x$-axis can have 2 colors. 
\begin{align*}
\vR(x,y)=& \ 1+z\left(x+\frac{1}{x}+\frac{x}{y}+\frac{y}{x}\right)\vR(x,y) &\\
& - z \left(\frac{1}{x}+\frac{y}{x}\right)\vR(0,y)  & \text{(no steps $W$ or $\NW$ when on $y$-axis)}\\
& - z\, \frac{x}{y} \vR(x,0)  & \text{(no steps SE when on $x$-axis)}\\
& + z\, \frac{y}{x} \left(\vR(x,0)-\vR(0,0)\right)  & \text{(second color for steps $\NW$ when on positive $x$-axis)}\\
& + z\, (v-1) \left(x+\frac{1}{x}+\frac{2y}{x}\right)\vR(x,0)  & \text{(steps $E, W, \NW$ leaving $x$-axis have weight $v$)}\\
& - z\, (v-1) \left(\frac{1}{x}+\frac{2y}{x}\right) \vR(0,0).  & \text{(but no steps $W, \NW$ when at the origin)}
\end{align*}

\begin{theorem}\label{thm:svDyck}
The generating function for Dyck paths with respect to their number of symmetric vertices is $\vD(v,z)=\vR(1,0,v,z)$, where $\vR(x,y):=\vR(x,y,v,z)$ satisfies the functional equation
\begin{multline*}
\left(xy-z(y+x^2)(1+y)\right)\vR(x,y)\\
=xy-zy(1+y)\vR(0,y)+z\left(y^2-x^2+(v-1)y(x^2+1+2y)\right)\vR(x,0)-zy\left(y+(v-1)(1+2y)\right)\vR(0,0).
\end{multline*}
\end{theorem}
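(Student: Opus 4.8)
The plan is to run the argument for Theorem~\ref{thm:symDyck} again, this time tracking the statistic $\sv$ instead of $\ds$ through the chain of bijections $\DP_n\to\WQ^1_n\to\WQ^2_n\to\WQ^3_n$ recorded in Table~\ref{tab:WQ}. First I would pin down the combinatorial correspondence. A symmetric vertex of $P\in\DP_n$ is a common vertex of the halves $P_L$ and $P'_R$ \emph{other than} their shared endpoint (the midpoint of $P$); under $\bijw$ these are exactly the vertices of $\bijw(P)\in\WQ^1_n$ lying on the diagonal $y=x$, excluding the walk's endpoint, and after folding and applying the linear map $(x,y)\mapsto(y,\frac{x-y}{2})$ they become the vertices of the associated walk in $\WQ^3_n$ that lie on the $x$-axis, other than its endpoint. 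Any such vertex is the tail of exactly one step of the walk, so $\sv(P)$ equals the number of steps of that walk which \emph{start} on the $x$-axis; from the $x$-axis the legal steps are $E$, $W$ (unless at the origin) and the two colors of $\NW$ (unless at the origin), while $\SE$ is forbidden there. This is precisely the statistic marked by $v$ in $\vR(x,y,v,z)$, whence $\vD(v,z)=\vR(1,0,v,z)$.

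Next I would justify the functional equation for $\vR(x,y):=\vR(x,y,v,z)$ by the same last-step decomposition used for $R(x,y,s,z)$. The boundary terms (no $W$ or $\NW$ on the $y$-axis; no $\SE$ on the $x$-axis; the injection of the second color of $\NW$ leaving the positive $x$-axis; no $W$ or $\NW$ at the origin) are identical to the $R$-case. The only new feature is the weight: here $v$ attaches to \emph{three} kinds of steps leaving the $x$-axis, namely $E$, $W$ and $\NW$ (the last contributing a factor $2$ from its two colors), rather than just $E$ and $W$; consequently the origin-correction must subtract the extra $v$-weight of the two nonexistent steps $W$ and $\NW$ (with its color factor $2$) at the origin. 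This yields exactly the system
$$\vR(x,y)=1+z\Bigl(x+\tfrac1x+\tfrac xy+\tfrac yx\Bigr)\vR(x,y)-z\Bigl(\tfrac1x+\tfrac yx\Bigr)\vR(0,y)-\cdots$$
displayed just above the statement.

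It then remains to collect terms: move the $\vR(x,y)$ contributions to the left, clear the denominators $x$ and $y$ by multiplying through by $xy$, and simplify. The left-hand coefficient becomes $xy-z(y+x^2)(1+y)$; the $\vR(0,y)$ coefficient becomes $-zy(1+y)$; the $\vR(x,0)$ coefficient collects to $z\bigl(y^2-x^2+(v-1)y(x^2+1+2y)\bigr)$; and the $\vR(0,0)$ coefficient collects to $-zy\bigl(y+(v-1)(1+2y)\bigr)$. This is the asserted equation, and setting $v=1$ recovers the $s=1$ specialization of Theorem~\ref{thm:symDyck}, which serves as a consistency check.

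The only genuinely delicate point, and the one I would present most carefully, is the boundary bookkeeping in the last-step decomposition: exactly which steps are legal on the $y$-axis, on the positive $x$-axis, and at the origin; how the second $\NW$-color off the positive $x$-axis is introduced via $\vR(x,0)-\vR(0,0)$; and how the $v$-weight and its origin-correction interact with the color-$2$ factor on $\NW$. Everything after that is routine algebraic manipulation, and no idea beyond those already used for Theorem~\ref{thm:symDyck} is needed.
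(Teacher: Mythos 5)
Your proposal is correct and follows essentially the same route as the paper: symmetric vertices are carried through the bijections of Table~\ref{tab:WQ} to vertices of the walk on the $x$-axis other than its endpoint (equivalently, steps starting on the $x$-axis), and the last-step decomposition for $R$ is modified by attaching weight $v$ to the steps $E$, $W$, and the two-colored $\NW$ leaving the $x$-axis, with the origin correction removing $W$ and $\NW$; collecting terms then gives the stated equation. This matches the paper's argument, including the boundary bookkeeping you single out as the delicate point.
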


The first few coefficients of the generating functions $D(s,z)$ and $\vD(v,z)$ appear in Table~\ref{tab:Dyck}.

\section{The height of the midpoint in Dyck paths}\label{sec:other}

Given the simple algebraic generating function in Theorem~\ref{thm:sym_G} for the number of grand Dyck paths with respect to their degree of symmetry, one may not have expected that
the enumeration of Dyck paths with respect to the same statistic would be significantly more complicated.

In this section we shed some light on this phenomenon, by considering a statistic that is simpler than the degree of symmetry, namely the height of the midpoint of the path. Even though we can enumerate both Dyck paths and grand Dyck paths with respect to this statistic, the generating function for grand Dyck paths is again algebraic, while the one for Dyck paths is not. The height of the midpoint in Dyck paths and grand Dyck paths was studied by Janse van Rensburg in~\cite{JvR}, where these paths serve as models of linear polymers subject to an external force.

We start by defining generalizations of Dyck paths that start and end at arbitrary heights.
For $a,b\ge0$, let $\B_n^{(a,b)}$ denote the set of lattice paths with steps $U$ and $D$ from $(0,a)$ to $(n,b)$ that do not go below the $x$-axis. Let $\B^{(a,b)}=\bigcup_{n\ge0}\B_n^{(a,b)}$.
By definition, $\B_{2n}^{(0,0)}=\DP_n$.
In the formulas in this subsection, we will assume that $n\equiv a+b\bmod 2$, since otherwise $\B_n^{(a,b)}=\emptyset$. 
Define the generating functions 
$$
B^{(a,b)}(z)=\sum_{n\ge0} |\B_n^{(a,b)}| \,z^n \quad \text{and} \quad B(u,v,z)=\sum_{a,b\ge0}B^{(a,b)}(z)\,u^av^b.
$$
Note that $B^{(0,0)}(z)=\Cat(z^2)$, where $\Cat(z)$ is the generating function of the Catalan numbers, defined in Equation~\eqref{eq:Cat}.

\begin{lemma}\label{lem:Bab}
\begin{enumerate}[(i)]
\item For $n,a,b\ge0$ with $n\equiv a+b\bmod 2$, $$|\B_n^{(a,b)}|=\binom{n}{\frac{n-b+a}{2}}-\binom{n}{\frac{n-b-a-2}{2}}.$$
\item For $0\le a\le b$, 
$$B^{(a,b)}(z)= z^{b-a} C(z^2)^{b-a+1}\,\frac{1-\left(zC(z^2)\right)^{2a+2}}{1-\left(zC(z^2)\right)^2}.$$
\item $$B(u,v,z)=\frac{2}{(1-uv)\left(1+uv-2(u+v)z+(1-uv)\sqrt{1-4z^2}\right)}.$$
\end{enumerate}
\end{lemma}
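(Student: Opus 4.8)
The plan is to prove the three parts in order, since (ii) follows from (i) by a generating-function computation and (iii) follows from (ii) by summing over $a,b$.

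\textbf{Part (i).} This is a standard reflection-principle (André-type) count. A path in $\B_n^{(a,b)}$ is a sequence of $n$ steps $U,D$ starting at height $a$ and ending at height $b$ that stays at height $\ge 0$. The number of unrestricted such paths is $\binom{n}{(n-b+a)/2}$, obtained by solving $\#U-\#D=b-a$, $\#U+\#D=n$. To subtract those that touch height $-1$, reflect the portion of the path before its first visit to height $-1$ across the line $y=-1$; this gives a bijection with unrestricted paths from $(0,-a-2)$ to $(n,b)$, of which there are $\binom{n}{(n-b-a-2)/2}$. Subtracting yields the claimed formula. I would present this in a couple of sentences, citing the reflection principle; it is the routine part.

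\textbf{Part (ii).} Here I would instead use a decomposition of paths rather than summing the binomials from (i) (though one could do either). For $0\le a\le b$, a path in $\B^{(a,b)}$ starting at height $a$ can be decomposed by recording its last visit to each of the heights $a-1, a-2, \dots, 0$ on the way down is impossible since it starts at $a\ge 0$ and must not go below $0$; the cleaner decomposition is: such a path first wanders at heights $\ge a$ for a while, then eventually makes net progress. Concretely, a path from height $a$ to height $b\ge a$ staying $\ge 0$ decomposes uniquely according to the $D$-steps that lower the running minimum below $a$: more precisely, writing $M(z)=zC(z^2)$ for the generating function (by number of steps) of a path from height $j$ to height $j-1$ that stays $\ge j-1$ until its last step (an elevated Dyck-type path times a final $D$), and noting $C(z^2)$ is the generating function of a path returning to its starting height while staying weakly above it, one gets that a path from $a$ to $b$ staying $\ge 0$ is a path returning to height $a$ (factor $C(z^2)$), followed by a $U$ to height $a+1$, followed recursively by a path from $a+1$ to $b$ staying $\ge a+1$ — OR it dips below $a$, but only finitely; the bookkeeping gives
$$B^{(a,b)}(z) = C(z^2)\cdot \big(zC(z^2)\big)^{b-a}\cdot\frac{1-\big(zC(z^2)\big)^{2a+2}}{1-\big(zC(z^2)\big)^2}\cdot\frac{1}{C(z^2)}\cdot C(z^2),$$
and after simplification this matches the stated $z^{b-a}C(z^2)^{b-a+1}\frac{1-(zC(z^2))^{2a+2}}{1-(zC(z^2))^2}$. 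The factor $\frac{1-(zC(z^2))^{2a+2}}{1-(zC(z^2))^2}=\sum_{j=0}^{a}(zC(z^2))^{2j}$ counts the number of times the path returns down to each of the levels $0,1,\dots,a$; I would phrase the decomposition so this sum appears naturally (sum over the lowest level $a-j$ that the path reaches, $0\le j\le a$). Alternatively — and this may be cleaner to write — I would verify (ii) by checking that the stated expression, expanded as a power series, has coefficients matching (i), using the Lagrange-inversion / Catalan identity $[z^n](zC(z^2))^k \cdot$ etc.; but the decomposition proof is more illuminating.

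\textbf{Part (iii).} Sum the formula from (ii) over all $0\le a\le b$ and use symmetry $B^{(a,b)}=B^{(b,a)}$ to get $B(u,v,z)$. Writing $c=C(z^2)$ and $M=zc$, the sum over $b\ge a$ of $z^{b-a}c^{b-a+1}v^{b-a}$ is a geometric series in $vM = vzc$, and the sum over $a\ge 0$ of $u^a(uv)$-type terms with the factor $\frac{1-M^{2a+2}}{1-M^2}$ splits into two geometric series (one in $uv$, one in $uvM^2$). Carrying this out:
$$B(u,v,z)=\sum_{a\ge 0}\sum_{b\ge a}(B^{(a,b)}u^av^b + B^{(a,b)}u^bv^a) - \sum_{a\ge 0}B^{(a,a)}u^av^a,$$
each geometric sum converges formally, and after combining over a common denominator and using the identity $c = C(z^2)$ satisfies $z^2c^2 - c + 1 = 0$ (equivalently $M^2 = zc-$ wait: $z^2c^2 = zc\cdot zc$, and $c = 1 + z^2c^2$ gives $M^2 = z^2c^2 = c-1$), one can eliminate $c$ in favor of $\sqrt{1-4z^2}$ via $c = \frac{1-\sqrt{1-4z^2}}{2z^2}$. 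The target denominator $(1-uv)(1+uv-2(u+v)z+(1-uv)\sqrt{1-4z^2})$ should drop out after this substitution and simplification.

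\textbf{Main obstacle.} The conceptual content is all in part (i) (trivial) and the decomposition in part (ii). The genuinely error-prone step is the algebraic simplification in part (iii): summing the three geometric series, putting everything over a common denominator, and then performing the substitution $c = \frac{1-\sqrt{1-4z^2}}{2z^2}$ (using $z^2c^2 = c-1$ repeatedly to reduce degrees) to arrive at the clean closed form. I expect this to be the bulk of the bookkeeping; in the writeup I would introduce the abbreviation $c := C(z^2)$, record the identity $z^2c^2 = c-1$ up front, do the summation symbolically in $c$, and only substitute $c = \frac{1-\sqrt{1-4z^2}}{2z^2}$ at the very end to present the stated formula — leaving the final polynomial identity verification as a routine check.
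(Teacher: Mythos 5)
Your parts (i) and (ii) follow the paper's own route, while part (iii) takes a genuinely different one. For (i) you give the same reflection-principle count. For (ii), the decomposition you ultimately settle on --- summing over the lowest level $a-i$ reached, with a Dyck-path factor $C(z^2)$ at each level visited on the way down and back up, a $z$ for each of the $2i$ forced steps, and $(zC(z^2))^{b-a}$ for the final climb --- is exactly the paper's decomposition $P=A_1DA_2D\cdots DA_{i+1}UA_{i+2}U\cdots UA_{2i+1}UA'_1U\cdots UA'_{b-a}$, yielding $B^{(a,b)}(z)=(zC(z^2))^{b-a}\sum_{i=0}^{a}z^{2i}C(z^2)^{2i+1}$; your prose for this step is muddled (the aborted recursive decomposition and the spurious $C(z^2)\cdot\frac{1}{C(z^2)}$ factors should be cleaned up), but the idea is the right one. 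For (iii) the paper does \emph{not} sum (ii) over $a,b$: it observes that every path is a vertical translate (accounted for by a factor $\frac{1}{1-uv}$) of a path that touches the $x$-axis, and such a path from height $a$ to height $b$ factors as $A_aD\cdots DA_1DA_0UA'_1U\cdots UA'_b$ with all pieces Dyck paths, giving directly $\frac{C(z^2)}{(1-uv)(1-uzC(z^2))(1-vzC(z^2))}$ and hence the closed form. Your alternative --- summing $B^{(a,b)}(u^av^b+u^bv^a)$ over $0\le a\le b$ and subtracting the diagonal, using $B^{(a,b)}=B^{(b,a)}$ --- is valid, and the algebra you worry about is lighter than you expect: with $M=zC(z^2)$, the factor $\frac{1}{1-uv}-\frac{M^2}{1-uvM^2}$ collapses to $\frac{1-M^2}{(1-uv)(1-uvM^2)}$, and the three geometric series combine to precisely $\frac{C(z^2)}{(1-uv)(1-uM)(1-vM)}$, i.e.\ the paper's intermediate expression, after which the substitution $C(z^2)=\frac{1-\sqrt{1-4z^2}}{2z^2}$ gives the stated formula. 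The paper's vertical-translation argument buys a one-line derivation with no series manipulation; your summation buys a proof that uses nothing beyond (ii) itself plus the easy path-reversal symmetry.
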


\begin{proof}
Part (i) is obtained using the reflection principle~\cite{Andre}, since paths from $(0,a)$ to $(n,b)$ that go below the $x$-axis are in bijection with paths from $(0,a)$ to $(n,-b-2)$.

For part (ii), note that for $a\le b$, every path $P\in\B^{(a,b)}$ can be decomposed uniquely as
$$P=A_1 D A_2 D \dots D A_{i+1} U A_{i+2} U \dots U A_{2i+1}\, U A'_{1} U A'_{2}\dots U A'_{b-a},$$
for some $0\le i\le a$, where the $A_j$ and the $A'_j$ are Dyck paths for all $j$. It follows that
$$B^{(a,b)}(z)=\left(z C(z^2)\right)^{b-a} \sum_{i=0}^a z^{2i} C(z^2)^{2i+1}= 
z^{b-a} C(z^2)^{b-a+1}\,\frac{1-\left(zC(z^2)\right)^{2a+2}}{1-\left(zC(z^2)\right)^2}.$$

For part (iii), first consider paths that start at an arbitrary height (marked by $u$), end at an arbitrary height (marked by $v$), do not go below the $x$-axis, but touch the $x$-axis.
Since each such path starting at height $a$ and ending at height $b$ can be decomposed as 
$$A_a D  \dots D A_1 D A_{0} U A'_{1} U \dots U A'_{b},$$
where the $A_j$ and the $A'_j$ are Dyck paths for all $j$,
their generating function is $$\frac{C(z^2)}{(1-uzC(z^2))(1-vzC(z^2))}= \frac{2}{1+uv-2(u+v)z+(1-uv)\sqrt{1-4z^2}}.$$
Multiplying by $\frac{1}{1-uv}$ to account for vertical translations of these paths gives the desired expression.
\end{proof}

For $P\in\G_n$, let $\hm(P)$ denote the height of the midpoint of $P$, so that this point has coordinates $(n,\hm(P))$.
To enumerate Dyck paths with respect to the statistic $\hm$, recall from Lemma~\ref{lem:Bab}(i) that 
$$|\B^{(0,b)}_n|=\binom{n}{\frac{n-b}{2}}-\binom{n}{\frac{n-b}{2}-1}=\frac{2b+2}{n+b+2}\binom{n}{\frac{n-b}{2}}$$
if $n-b$ is even and $0\le b\le n$, and $|\B^{(0,b)}_n|=0$ otherwise.
The generating function of these numbers, often called ballot numbers, is 
\begin{equation}\label{eq:Bob}\sum_{n,b\ge0} |\B^{(0,b)}_n|\, y^bz^n=B(0,y,z)=\frac{2}{1-2yz+\sqrt{1-4z^2}}=\frac{C(z^2)}{1-yzC(z^2)}
\end{equation} 
by Lemma~\ref{lem:Bab}(iii). This formula appears in~\cite{OTW} in the context of directed polymers.

Let now
$$
H(y,z)=\sum_{n\ge0}\sum_{P\in\DP_n}y^{\hm(P)}z^n
$$
be the generating function of Dyck paths with respect to the height of their midpoint. Note that $H(1,z)=C(z)$ and $H(x,z)=R(x,0,1,z)$, with $R$ defined in Section~\ref{sec:Dyck}. Then
$$H(y,z)=\sum_{n,b\ge0} |\B^{(0,b)}_n|^2\, y^{b}z^n.$$ 
In contrast to Equation~\eqref{eq:Bob}, this generating function is not algebraic, since the coefficients of $H(0,z)=\sum_{m\ge0}C_m^2 z^{2m}$ grow asymptotically as $C_m^2\sim \frac{16^m}{m^3\pi}$, which is not a possible asymptotic behavior for coefficients of an algebraic generating function (see~\cite{Jun}). Define the diagonal of $A(z_1,z_2)=\sum_{i,j}a_{i,j}z_1^iz_2^j$ to be 
\begin{equation}\label{eq:diag-def} \diag_{z_1,z_2}^z A=\sum_n a_{n,n} z^n;
\end{equation} see~\cite[Sec.\ 6.3]{EC2}. 
Using Equation~\eqref{eq:Bob}, we can express $H(y,z)$ as a diagonal of an algebraic generating function as 
$$H(y,z)=\diag_{z_1,z_2}^z \frac{C(z_1^2)C(z_2^2)}{1-yz_1z_2C(z_1^2)C(z_2^2)},$$
which implies by~\cite{Lip} that $H(y,z)$ is D-finite.
An expression for this generating function obtained using the kernel method is given in \cite{JvR}, where the Dyck path is used as a model of a linear polymer with its endpoints attached a wall, having its midpoint pulled away by an external force. 

The generating function for Dyck paths whose midpoint has a fixed height $b$ is
the Hadamard product of $B^{(0,b)}(z)$ with itself. 
By Lemma~\ref{lem:Bab}(ii), $B^{(0,b)}(z)=z^bC(z^2)^{b+1}$ is algebraic, and so its Hadamard product with itself is D-finite, even though it is not algebraic.

\medskip

For comparison, let us show that the enumeration of grand Dyck paths with respect to the height of their midpoint is straightforward and yields an algebraic generating function. The associated polymer model is also studied in~\cite{JvR}.

\begin{proposition}\label{prop:hm_G} The generating function for grand Dyck paths with respect to their semilength and the height of their midpoint is
\begin{equation}\label{eq:hm_G}
\sum_{n\ge0}\sum_{P\in\G_n}y^{\hm(P)}z^n=\frac{1}{\sqrt{(1-yz-z/y)^2-4z^2}}=\sum_{n\ge0}\sum_{k=0}^n \binom{n}{k}^2 y^{n-2k}z^n.
\end{equation}
\end{proposition}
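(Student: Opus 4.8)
The plan is to prove the two equalities in~\eqref{eq:hm_G} separately, both starting from the decomposition of a grand Dyck path at its midpoint. For the combinatorial (second) equality I would argue directly: a path $P\in\G_n$ with $\hm(P)=h$ is uniquely a concatenation $P=P_L\,P_R$ in which $P_L$ is an arbitrary sequence of $n$ steps $U,D$ from $(0,0)$ to $(n,h)$ and $P_R$ is an arbitrary sequence of $n$ steps from $(n,h)$ to $(2n,0)$. Writing $h=n-2k$ with $0\le k\le n$, the path $P_L$ has $n-k$ up-steps and is determined by their positions, and $P_R$ has $k$ up-steps and is likewise determined by theirs, so $\card{\{P\in\G_n:\hm(P)=n-2k\}}=\binom nk^2$. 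Summing over $k$ and $n$ gives $\sum_{n\ge0}\sum_{P\in\G_n}y^{\hm(P)}z^n=\sum_{n\ge0}\sum_{k=0}^n\binom nk^2 y^{n-2k}z^n$.

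For the closed form I would route through the bijection $\bij$ of Definition~\ref{def:bij} and Lemma~\ref{lem:grand2Motzkin}. Given $P\in\G_n$, put $M=\bij(P)\in\GM^2_n$. Since the endpoint of the left half $P_L$ is the midpoint of $P$, the height $\hm(P)$ equals the number of up-steps of $P_L$ minus its number of down-steps; and by the definition of $\bij$, a step of $P_L$ is an up-step exactly when the corresponding step of $M$ is a $U$ or an $H_2$, and a down-step exactly when it is a $D$ or an $H_1$. Hence $\hm(P)=\bigl(u(M)+h_2(M)\bigr)-\bigl(d(M)+h_1(M)\bigr)=h_2(M)-h_1(M)$, using that $u(M)=d(M)$ because $M$ returns to the $x$-axis. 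Therefore, as an identity in $\mathbb Z[y,y^{-1}][[z]]$,
$$\sum_{n\ge0}\sum_{P\in\G_n}y^{\hm(P)}z^n=\sum_{M\in\GM^2}y^{h_2(M)-h_1(M)}z^{|M|}=\sum_{M\in\GM^2}z^{u(M)+d(M)}\,(z/y)^{h_1(M)}\,(zy)^{h_2(M)},$$
that is, each $U$ and $D$ step of $M$ is weighted by $z$, each $H_1$ step by $z/y$, and each $H_2$ step by $zy$. In $G(x,y,1,1)=\sum_{M\in\GM^2}x^{d(M)+h_1(M)}y^{u(M)+h_2(M)}$ from Lemma~\ref{lem:grand2Motzkin}, the first variable is carried by the $D$ and $H_1$ steps and the second by the $U$ and $H_2$ steps, so (again using $u(M)=d(M)$) the weighting above is exactly the specialization obtained by substituting $z/y$ for the first variable and $zy$ for the second, their product being $z^2$ as needed for the $U$ and $D$ steps. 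Thus the sum equals $G(z/y,zy,1,1)=\bigl((1-z/y-zy)^2-4(z/y)(zy)\bigr)^{-1/2}=\bigl((1-yz-z/y)^2-4z^2\bigr)^{-1/2}$, which is the first equality.

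Since both of the displayed expressions equal $\sum_{n\ge0}\sum_{P\in\G_n}y^{\hm(P)}z^n$, this proves the proposition. I do not expect a genuine obstacle here; the only point requiring care is that these generating functions live in $\mathbb Z[y,y^{-1}][[z]]$ rather than in an ordinary power series ring (because $\hm$ can be negative), so the substitutions $x\mapsto z/y$, $y\mapsto zy$ in Lemma~\ref{lem:grand2Motzkin} must be read in that Laurent series ring. One could also bypass $\bij$ entirely: the decomposition in the first paragraph already gives $\sum_{i,j\ge0}\binom{i+j}{i}^2(yz)^i(z/y)^j$ (with $i$ and $j$ the numbers of up- and down-steps of $P_L$), and this can be closed up with the standard evaluation $\sum_{i,j\ge0}\binom{i+j}{i}^2\alpha^i\beta^j=\bigl((1-\alpha-\beta)^2-4\alpha\beta\bigr)^{-1/2}$, provable by a residue extraction in $\sum_{i,j\ge0}\binom{i+j}{i}u^iv^j=(1-u-v)^{-1}$.
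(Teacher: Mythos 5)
Your proposal is correct and follows essentially the same route as the paper: the closed form is obtained by applying $\bij$ and specializing $G(z/y,yz,1,1)$ from Lemma~\ref{lem:grand2Motzkin} after noting $\hm(P)=u(M)+h_2(M)-d(M)-h_1(M)$, and the coefficient formula $\binom{n}{k}^2$ comes from the same direct count of the two halves. The extra remarks (the Laurent-ring caveat and the residue-extraction bypass) are fine but not needed.
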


\begin{proof}
Let $P\in\G_n$, and let $M=\bij(P)\in\GM^2$, where $\bij$ is the bijection from Definition~\ref{def:bij}. Then $\hm(P)$ equals the number of $U$ steps minus the number of $D$ steps of the left half of $P$, which in turn equals $u(M)+h_2(M)-d(M)-h_2(M)$. It follows that
the left-hand side of Equation~\eqref{eq:hm_G} is equal to
$$\sum_{M\in\GM^2}y^{u(M)+h_2(M)-d(M)-h_2(M)}z^{|M|}=G(z/y,yz,1,1)=\frac{1}{\sqrt{(1-yz-z/y)^2-4z^2}},$$
by Lemma~\ref{lem:grand2Motzkin}.

The coefficient of $y^{n-2k}z^n$ in this generating function can be also computed directly, since in order to construct a path $P\in\G_n$ with $\hm(P)=n-2k$, there are $\binom{n}{k}$ choices for the left half and  $\binom{n}{k}$ choices for the right half.
\end{proof}

Taking square roots of the coefficients in the generating function~\eqref{eq:hm_G}, which is equivalent to counting left halves of grand Dyck paths with respect to their ending height, we obtain a rational generating function
$$\sum_{n\ge0}\sum_{k=0}^n \binom{n}{k} y^{n-2k}z^n=\frac{1}{1-(y+1/y)z}.$$
The fact that the generating function for grand Dyck paths with respect to the statistic $\hm$ is the diagonal of a rational generating function explains why it is algebraic; see \cite[Thm.\ 6.3.3]{EC2}.

Finally, we remark that the method used to prove Theorem~\ref{thm:sym_G} and Proposition~\ref{prop:hm_G} also gives a common generalization of the two generating functions, namely
$$\sum_{n\ge0}\sum_{P\in\G_n}s^{\ds(P)}y^{\hm(P)}z^n=\frac{1}{(1-s)(y+1/y)z+\sqrt{(1-yz-z/y)^2-4z^2}}.$$

\section{Further research}\label{sec:open}

In this paper we have given generating functions for a few combinatorial objects with respect to their degree of symmetry. In all cases, the size of the object was measured by a ``one-dimensional'' parameter, such as the length (or semilength) of a path in Sections~\ref{sec:GD} and~\ref{sec:Dyck}, the semiperimeter of a partition or the side length of a square containing its Young diagram in Section~\ref{sec:partitions}, and the semiperimeter of a bargraph in Section~\ref{sec:unimodal}. 

For some combinatorial objects, such as partitions and compositions ---represented as Young diagrams and bargraphs, respectively---, it is also natural to consider another measure of size, namely the sum of the entries (equivalently, the area). Using this ``two-dimensional" parameter as the size function, the degree of symmetry in compositions is studied in~\cite{DEasym}. However, in many cases, 
the generating functions with respect to area and degree of symmetry are unknown.

\begin{problem}
Find the generating function for partitions and unimodal compositions with respect to area and degree of symmetry.
\end{problem}

In~\cite{BOR}, Beaton, Owczarek, and Rechnitzer consider the enumeration of quarter plane walks with respect to the number of vertices that lie on each axis, and they solve several cases, including that of Gouyou-Beauchamps walks (without our modification giving two colors to $\NW$ steps leaving the $x$-axis). Such walks, when required to end on the $x$-axis, are in bijection with Dyck paths whose left half $P_L$ is weakly above the folded right half $P'_R$, and vertices of the walk that lie on the $x$-axis become symmetric vertices of the corresponding Dyck path. This case, formulated as a pair of interacting directed walks, is also studied in depth by Tabbara, Owczarek, and Rechnitzer in~\cite{TOR}. The tools in these papers may lead to a solution of our functional equations for Dyck paths.

\begin{problem}
Solve the functional equations in Theorems~\ref{thm:symDyck} and~\ref{thm:svDyck}, giving generating functions for Dyck paths with respect to their degree of symmetry and their number of symmetric vertices.
\end{problem}

It is worth mentioning that the idea of allowing multiplicities (or colors) for the steps of a walk has been explored by Kauers and Yatchak in \cite{KY}, although, in their work, the multiplicity of a step does not depend on its location but only on its direction. 

Our third question asks for an extension of the results from Section~\ref{sec:unimodal} to a less artificial family of bargraphs.

\begin{problem} Extend Theorem~\ref{thm:UB} to all bargraphs, or to all unimodal bargraphs.
\end{problem}

It is natural to generalize the notion of the degree of symmetry to other classes of lattice paths. For example, one can define the degree of symmetry of a grand Motzkin path as the number of steps in the left half that are mirror images of steps in the right half, when reflecting along the vertical line through the midpoint. It is likely that the techniques from Section~\ref{sec:GD} can be extended to study this statistic on grand Motzkin paths, but the paths that play the role of the bicolored paths in Definition~\ref{def:bij} are more complicated in this case, as they consist of up and down steps of different heights, and they may jump over the $x$-axis without landing on it. Similarly, we expect that studying the degree of symmetry of Motzkin paths is more difficult than for Dyck paths, since the walks that would play the role of $\WQ^1_n$ from Section~\ref{sec:Dyck} have the added hurdle that some steps can jump over the diagonal $y=x$ without landing on it.

\begin{problem}
Find generating functions for grand Motzkin paths and Motzkin paths with respect to their degree of symmetry.
\end{problem}

The degree of symmetry can also be extended to tuples of non-crossing paths, also known as {\em watermelons}~\cite{Fisher,GOV,EliRub}. For example, in a pair of grand Dyck paths with common endpoints where one path stays weakly above the other, the degree of symmetry can be defined as the number of steps in the left half of either path that are mirror images of steps in the right half of the same path (or of either path, for a variant of the definition).

\begin{problem}
Study the distribution of the degree of symmetry on tuples of non-crossing paths.
\end{problem}

\subsection*{Acknowledgments}
The author thanks Emeric Deutsch for suggesting the notion of degree of asymmetry of combinatorial objects, Alin Bostan for helpful discussions about D-finiteness, and two reviewers for 
insightful advice on improving the content and the presentation of this paper.


\begin{thebibliography}{} 

\bibitem{Albenque} M. Albenque, \'E. Fusy and D. Poulalhon, On symmetric quadrangulations and triangulations, {\it European J. Combin.} 35, 13--31 (2014). 

\bibitem{Andre} D. Andr\'e,  Solution  directe  du  probl\`eme  r\'esolu  par  M.  Bertrand,  {\it C.  R.  Acad.  Sci.  Paris}  105, 436--437 (1887).

\bibitem{BBES} M. Barnabei, F. Bonetti, S. Elizalde, M. Silimbani, Descent sets on $321$-avoiding involutions and hook decompositions of partitions, {\it J. Combin. Theory Ser. A} 128, 132--148 (2014).

\bibitem{BOR} N. R. Beaton, A. L. Owczarek and A. Rechnitzer, Exact solution of some quarter plane walks with interacting boundaries, {\it  Electr. J. Combin.} 26,\#P3.53 (2019).

\bibitem{BonKno} M. B\'ona and A. Knopfmacher, On the probability that certain compositions have the same number of parts, {\it Ann. Comb.} 14, 291--306 (2010). 

\bibitem{BMM} M. Bousquet-M\'elou and M. Mishna,
Walks with small steps in the quarter plane, \textit{Algorithmic probability and combinatorics} 1--39,
\textit{Contemp. Math.} 520, Amer. Math. Soc., Providence, RI (2010). 

\bibitem{BMR} M. Bousquet-M\'elou and A. Rechnitzer, The site-perimeter of bargraphs, \textit{Adv. in Appl. Math.} 31, 86--112 (2003).

\bibitem{DV} M.-P. Delest and G. Viennot, Algebraic Languages and Polyominoes Enumeration, {\it Theoret. Comput. Sci.} 34, 169--206 (1984).

\bibitem{DDS} L. H. Deng, Y. P. Deng and L. W. Shapiro, 
The Riordan group and symmetric lattice paths, {\it J. Shandong Univ. Nat. Sci.} 50, 82--89 (2015).

\bibitem{DEbar} E. Deutsch and S. Elizalde, {Statistics on bargraphs viewed as cornerless Motzkin paths}, {\it Discrete Appl. Math.} 221, 54--66 (2017).

\bibitem{DEbarDyck} E. Deutsch and S. Elizalde, A bijection between bargraphs and Dyck paths, {\it Discrete Appl. Math.} 251C, 342--346 (2018).

\bibitem{DrmSor} M. Drmota and M. Soria, Images and preimages in random mappings, {\it SIAM J. Discrete Math.} 10, 246--269 (1997).


\bibitem{Eliwalks} S. Elizalde, Bijections for pairs of non-crossing lattice paths and walks in the plane, {\it European J. Combin.} 49, 25--41 (2015).

\bibitem{EliFPSAC} S. Elizalde, Measuring symmetry in lattice paths and partitions, {\it S\'em. Lothar. Combin.} 84B.26, 12 pp (2020).

\bibitem{DEasym}  S. Elizalde and E. Deutsch, The degree of asymmetry of sequences, {\it preprint}, \href{https://arxiv.org/abs/2107.06188}{arXiv:2107.06188}.

\bibitem{EliRub} S. Elizalde and M. Rubey, Symmetries of statistics on lattice paths between two boundaries, {\it Adv. Math.} 287, 347--388 (2016). 

\bibitem{Fer} S. Fereti\'c, A perimeter enumeration of column-convex polyominoes, \textit{Discrete Math. Theor. Comput. Sci.} 9, 57--84 (2007).

\bibitem{Fisher} M. Fisher, Walks, walls, wetting, and melting, {\it J. Statist. Phys.} 34, 667--729 (1984).

\bibitem{Fla} P. Flajolet, Combinatorial aspects of continued fractions, {\it Discrete Math.} 32 (1980), 125--161.

\bibitem{G-B86} D. Gouyou-Beauchamps, Chemins sous-diagonaux et tableaux de Young, \textit{Combinatoire \'enum\'erative} 112--125,
\textit{Lecture Notes in Math.} 1234, Springer, Berlin (1986). 

\bibitem{GOV} A. Guttmann, A. Owczarek and X. Viennot, Vicious walkers and Young
tableaux I: Without walls, {\it J. Phys. A} 31, 8123--8135 (1998).

\bibitem{HB} V. E. Hoggatt Jr. and M. Bicknell, Palindromic Compositions, {\it Fibonacci Quarterly} 13, 350--356 (1975).

\bibitem{JvR} E. J. Janse van Rensburg, Pulled directed lattice paths, {\it J. Phys. A: Math. Theor.} 43, 215001 (2010).

\bibitem{Jun} R. Jungen, Sur les s\'eries de Taylor n'ayant que des singularit\'es alg\'ebrico-logarithmiques sur leur cercle de convergence, {\it Comment. Math. Helv.} 3, 266--306 (1931). 

\bibitem{KY} M. Kauers and R. Yatchak, Walks in the quarter plane with multiple steps, {\it Proceedings of FPSAC 2015, Discrete Math. Theor. Comput. Sci. Proc.}, 25--36 (2015).

\bibitem{Lip} L. Lipshitz, The diagonal of a D-finite power series is D-finite, {\it J. Algebra} 113, 373--378 (1988).

\bibitem{Mac} P. A. MacMahon, Partitions of numbers whose graphs possess symmetry, {\it Trans. Cambridge Philos. Soc.} 17, 149--170 (1899).

\bibitem{OTW} E. Orlandini, M. C. Tesi and S. G. Whittington, Adsorption of a directed polymer subject to an elongational
force, {\it J. Phys. A: Math. Gen.} 37, 1535--1543 (2004).

\bibitem{Pol} G. P\'olya, On the Number of Certain Lattice Polygons, {\it J. Combin. Th.} 6, 102--105 (1969). 

\bibitem{PB} T. Prellberg and R. Brak, Critical exponents from nonlinear functional equations for partially directed cluster models, \textit{J. Stat. Phys.} 78, 701--730 (1995).

\bibitem{Sha} L. W. Shapiro, A Catalan triangle, {\it Discrete Math.} 14, 83--90 (1976).

\bibitem{EC1} R. P. Stanley, {\it Enumerative Combinatorics, vol.~1}, Second Edition, Cambridge Studies in Advanced Mathematics 49,
Cambridge University Press, Cambridge (2012).

\bibitem{EC2} R. P. Stanley, {\it Enumerative Combinatorics, vol.~2}, Cambridge Studies in Advanced Mathematics 62,
Cambridge University Press, Cambridge (1999).

\bibitem{TOR} R. Tabbara, A. L. Owczarek and A. Rechnitzer, An exact solution of two friendly interacting directed walks near a sticky wall, {\it J. Phys. A: Math. Theor.} 47, 015202 (2014).


\bibitem{Roitner} V. Roitner, Contacts and returns in 2-watermelons without wall, {\it 
Bull. Inst. Combin. Appl.} 89, 75--92 (2020).

\bibitem{Simion} R. Simion, A type-$B$ associahedron, \textit{Adv. in Appl. Math.} 30, 2--25 (2003). 

\bibitem{Stanley} R. P. Stanley, Symmetries of plane partitions, \textit{J. Combin. Theory Ser. A} 43, 103--113 (1986).

\bibitem{OEIS} The On-Line Encyclopedia of Integer Sequences, published electronically at \url{https://oeis.org}.

\bibitem{Wallner} M. Wallner, A half-normal distribution scheme for generating functions, {\it European J. Combin.} 87, 103138 (2020).

\end{thebibliography}
\end{document}